\theoremstyle{plain}
\newtheorem{theorem}{Theorem}
\newtheorem{lemma}{Lemma}
\newtheorem{corollary}{Corollary}
\numberwithin{theorem}{section}
\numberwithin{corollary}{section}
\numberwithin{lemma}{section}
\numberwithin{equation}{section}
\newcommand{\be}{\begin{equs}}
	\newcommand{\ee}{\end{equs}}
\def \E{\mathbb{E}}
\def \P{\mathbb{P}}
\def \bP {\mathbb{P}}
\def \bE {\mathbb{E}}
\def \bR {\mathbb{R}}
\def \cB {\mathcal{B}}
\def \cN {\mathcal{N}}
\def \var {\mathsf{Var}}
\begin{document}
	
	\title{On Estimation of $L_{\lowercase{r}}$-Norms in Gaussian White Noise Models}
	\author{Yanjun Han, Jiantao Jiao, and Rajarshi Mukherjee\thanks{Yanjun Han is with the Department of Electrical Engineering, Stanford University, 350 Serra Mall, Stanford, CA 94305, USA. Email: \url{yjhan@stanford.edu}. Jiantao Jiao is with the Department of Electrical Engineering and Computer Sciences, University of California, Berkeley, 257M Cory Hall, Berkeley, CA 94720, USA. Email: \url{jiantao@eecs.berkeley.edu}. Rajarshi Mukherjee is with the Department of Biostatistics, Harvard T. H. Chan School of Public Health, Harvard University, 655 Huntington Avenue, Boston, MA 02115, USA. Email: \url{ram521@mail.harvard.edu}. Yanjun Han and Jiantao Jiao were partially supported by the NSF Center for Science of Information under Grant CCF-0939370, and Jiantao Jiao was partially supported by NSF Grant IIS-1901252.}}

	\maketitle
	
	\begin{abstract}
We provide a complete picture of asymptotically minimax estimation of $L_r$-norms (for any $r\ge 1$) of the mean in Gaussian white noise model over Nikolskii-Besov spaces. In this regard, we complement the work of \cite{lepski1999estimation}, who considered the cases of $r=1$ (with poly-logarithmic gap between upper and lower bounds) and $r$ even (with asymptotically sharp upper and lower bounds) over H\"{o}lder spaces. We additionally consider the case of asymptotically adaptive minimax estimation and demonstrate a difference between even and non-even $r$ in terms of an investigator's ability to produce asymptotically adaptive minimax estimators without paying a penalty.
	\end{abstract}	
	\tableofcontents
\section{Introduction}
Estimation of functionals of data generating distributions is a fundamental problem in statistics. Whereas relevant issues in finite dimensional parametric models are comparatively well understood \cite{bickel1993efficient,van2000asymptotic}, corresponding nonparametric analogues are often much more challenging and have attracted tremendous interest over the last two decades. In this regard, initial efforts have focused on the inference of linear and quadratic functionals in Gaussian white noise and density models and have contributed immensely to the foundations of ensuing research. We do not attempt to survey the extensive literature in this area. However, the interested reader can find a comprehensive snapshot of the literature in \cite{hall1987estimation,bickel1988estimating,donoho1990minimax1,donoho1990minimax2,fan1991estimation,birge1995estimation,kerkyacharian1996estimating,laurent1996efficient,nemirovski2000topics,cai2003note,cai2004minimax,cai2005nonquadratic,tchetgen2008minimax} and the references therein.

For treatment of more general smooth functionals in Gaussian White Noise model (for smoothness measured in terms of differentiability in $L_2$), the excellent monograph of \cite{nemirovski2000topics} provides detailed analyses and references of cases where efficient parametric rate of estimation is possible. Further, in  recent times, some progress has also been made towards the understanding of more complex nonparametric functionals over substantially more general observational models. These include causal effect functionals in observational studies and mean functionals in missing data models. For more details, we refer to \cite{robins2008higher,robins2015higher,mukherjee2017semiparametric}, which considers a general recipe to yield minimax estimation of a large class of nonparametric functionals common in statistical literature. However, apart from general theory of estimating linear functionals, most of the research endeavors, at least from the point of view of optimality, have focused on ``smooth functionals" (see \cite{robins2008higher} for a more discussions on general classes of ``smooth functionals"). 

In contrast, the results on the asymptotically minimax estimation of non-smooth  functionals have been comparatively sporadic (\cite{ibragimov1981,Korostelev1991,korostelev2012minimax}). The paradigm got an impetus from the seminal papers of \cite{lepski1999estimation} and \cite{cai2011testing} which considered the estimating of $L_r$-norms in Gaussian mean models. Subsequently, significant progress has been made regarding inference of non-smooth functionals in discrete distribution settings (\cite{valiant2011power,jiao2015minimax,wu2016minimax,han2016minimax,jiao2016minimax,jiao2017maximum}). However, even in the simpler setting of Gaussian white noise model, a complete picture of minimax optimality for estimating integrated non-smooth functionals remain unexplored. This paper is motivated by taking a step in that direction by providing a complete description of asymptotically minimax estimation of $L_r$-norms (for $r\ge 1$) of the mean in Gaussian white noise model over Nikolskii-Besov spaces. We additionally consider the case of adaptive minimax estimation and demonstrate a difference between even and non-even $r$ in terms of an investigator's ability to produce asymptotically adaptive minimax estimators without paying a penalty. 

More specifically, we consider noisy observation $\{Y(t)\}_{t\in [0,1]}$ in the Gaussian white noise model with known variance $\sigma^2$ as
\be 
dY(t)=f(t)dt+\frac{\sigma}{\sqrt{n}}dB(t), \label{eqn:model}
\ee
where $f:[0,1]\rightarrow \mathbb{R}$ is the unknown mean function and  $\{B(t)\}_{t\in [0,1]}$ is the standard Brownian motion on $[0,1]$. The main goal of this paper is to consider adaptive minimax estimation of the $L_r$-norm of the mean function $f$ (i.e. $\|f\|_r\triangleq (\int_{[0,1]} |f(t)|^rdt)^{1/r}$ for $r\ge 1$) over Nikolskii-Besov spaces $\cB_{p,\infty}^s(L)$ in $L_p[0,1]$, $p\ge 1$ of smoothness $s>0$ (defined in Section \ref{section:function_spaces_approx}). It is worth noting here, that the specific cases of the mean function $f$ being uniformly bounded away from $0$ is significantly easier since in that case a natural plug-in principle yields asymptotic optimality. 

As mentioned earlier, significant progress towards understanding these specific functionals has been made in \cite{lepski1999estimation} and \cite{cai2011testing}. In particular, \cite{lepski1999estimation} considers estimation of the $L_r$-norm over H\"{o}lder spaces of known smoothness and demonstrate rate optimal minimax estimation for $r$ even positive integers. For $r=1$ their results are suboptimal and leave a poly-logarithmic gap between the upper and lower bounds for the rate of estimation. Moreover, for general non-even $r$, \cite{lepski1999estimation} provides no particular estimator. Finally, their results are non-adaptive in nature and requires explicit knowledge of the smoothness index of the underlying function class. Our main contribution is improving the lower bound argument (over function spaces similar to H\"{o}lder balls) to close the gap in non-adaptive minimax estimation of $L_r$-norm of the signal function. Moreover, for general non-even $r\ge 1$, the analysis extends further to demonstrate adaptive minimax estimators without logarithmic penalties which are typical in smooth functional estimation problems. However, the situation is different for even integers $r$ where our results show that a poly-logarithmic penalty is necessary. In this effort, the fundamental work of   \cite{cai2011testing}, which  
considered estimating the $L_1$-norm of the mean of an $n$-dimensional multivariate Gaussian vector, serves as a major motivation. In Section \ref{section:main_results} we comment more on the main motivating ideas from \cite{cai2011testing} as well as the fundamental differences and innovations. 

The main results of this paper are summarized below. 
\begin{enumerate}
	\item [(a)] We produce minimax rate optimal estimator $L_r$ norm of the unknown mean function $f$ in Gaussian White noise model \eqref{eqn:model} with known variance. 
	
	\item [(b)]  For non-even $r$, an accompanying adaptive minimax optimal estimator is also provided. In contrast, for even integers $r$, we argue along the lines of standard results from \cite{ingster1987minimax,ingster2012nonparametric} that at least poly-logarithmic penalty needs to be paid for adaptation. The lower bound on this penalty is not sharp in this regard and only serves to demonstrates the lack of adaptation without paying a price. 
	

	\item [(c)] Similar to \cite{cai2011testing}, both our upper and lower bounds rely on best polynomial approximations of suitable functions on the unit interval, which might be of independent interest.
\end{enumerate}

\subsection{Organization} The rest of the paper is organized as follows. In Section \ref{section:function_spaces_approx} we discuss function spaces relevant to this paper as well as best polynomial approximations of continuous functions on compact intervals along with properties of Hermite polynomials, which are useful ingredients in the construction of our estimators. Section \ref{section:main_results} contains the main results of the paper and is divided into two main subsections based on the non-even or even nature of $r$ while estimating $\|f\|_r$. For non-even $r\ge 1$ (Section \ref{section:r_odd}), we first lay down the basic principles for $r=1$ (Section \ref{section:r_equals_one}) followed by the general $r$ (Section \ref{section:r_gtrone_odd}). The case of even $r$ is well understood from \cite{lepski1999estimation} and  is mostly presented here (Section \ref{section:r_even}) for completeness and discussing issues of adaptive estimation. In Section \ref{section:discussions} we discuss remaining issues and future directions. Proofs of the main theorems are collected in Section \ref{section:main_proofs} followed by proofs of several technical lemmas in Section \ref{section:technical_lemmas}.

\subsection{Notation} 
In this paper, $ \mathsf{Poly}_K$ denotes the set of all polynomials over $[-1,1]$ with real coefficients and degree at most $K$. For any finite set $S$ we denote its cardinality by $|S|$. For a function defined on $[0,1]$, for $1\leq q <\infty $ we let $\|h\|_q:=(\int_{[0,1]} |h(x)|^q dx)^{1/q}$ denote the $L_q$ semi-norm of $h$, $\|h\|_{\infty}:=\sup_{x \in [0,1]}|h(x)|$ the $L_{\infty}$ semi-norm of $h$. We say $h \in L_q[0,1]$ for $q\in [1,\infty]$ if $\|h\|_q<\infty$.  The results in this paper are mostly asymptotic (in $n$) in nature and thus requires some standard asymptotic  notations.  If $a_n$ and $b_n$ are two sequences of real numbers then $a_n \gg b_n$ (and $a_n \ll b_n$) implies that ${a_n}/{b_n} \rightarrow \infty$ (and ${a_n}/{b_n} \rightarrow 0$) as $n \rightarrow \infty$, respectively. Similarly $a_n \gtrsim b_n$ (and $a_n \lesssim b_n$) implies that $\liminf{{a_n}/{b_n}} = C$ for some $C \in (0,\infty]$ (and $\limsup{{a_n}/{b_n}} =C$ for some $C \in [0,\infty)$). Alternatively, $a_n=o(b_n)$ will also imply $a_n \ll b_n$ and $a_n=O(b_n)$ will imply that $\limsup{{a_n}/{b_n}} =C$ for some $C \in [0,\infty)$. Finally we comment briefly on the various constants appearing throughout the text and proofs. Given that our primary results concern convergence rates of various estimators, we will not emphasize the role of constants throughout and rely on fairly generic notation for such constants. In particular, for any fixed  tuple $v$ of real numbers, $C(v)$ will denote a positive real number which depends on elements of $v$ only. Finally, whenever we use the symbol $\lesssim$ in the asymptotic sense above, the hidden positive constant $C$ will depend on the known parameters of the problem.

\section{Function Spaces and Approximation}\label{section:function_spaces_approx} 
We begin with some standard definitions of function spaces \cite{Devore--Lorentz1993,hardle2012wavelets} that we work with throughout. In the study of nonparametric functional estimation problem, many studies were devoted to the case where $f$ is assumed to lie in a H\"{o}lder ball defined as
\be
H(s,L) \triangleq \left\{f\in L^2[0,1]: \frac{|f^{(r)}(x)-f^{(r)}(y)|}{|x-y|^{\alpha}}\le L, \forall x\neq y\in [0,1]\right\},
\ee
where $s=r+\alpha>0$ is the smoothness parameter, $r\in\mathbb{N}, \alpha\in(0,1]$. In this paper, we consider another function class which is close but not identical to the H\"{o}lder ball where the dependence of the upper and lower bounds on $n$ matches.
The $r$-th symmetric difference operator $\Delta_h^r$ is defined as \cite{Devore--Lorentz1993}
\be
\Delta_h^r f(x)= \sum_{k=0}^r (-1)^{r-k}\binom{r}{k}f(x+(k-\frac{r}{2})h),
\ee
with the agreement that $\Delta_h^r f(x)=0$ when either $x+\frac{r}{2}h$ or $x-\frac{r}{2}h$ does not belong to $[0,1]$. Then the $r$-th order modulus of smoothness is defined as \cite{Devore--Lorentz1993}
\be \label{eq.moduli_of_smoothness}
\omega^r(f,t)_p = \sup_{0<h\le t} \|\Delta_h^r f\|_p,
\ee
with $p\in [1,\infty]$. Now define the Besov norm of a function $f$ as \cite{besov1979integral}
\be \label{eq.besov_norm}
\|f\|_{\cB_{p,q}^s} = \|f\|_p + \begin{cases}
	\left[\int_0^\infty \left(\frac{\omega^r(f,t)_p}{t^s}\right)^q\cdot \frac{dt}{t}\right]^{\frac{1}{q}} & 1\le q<\infty\\
	\sup_{t>0} \frac{\omega^r(f,t)_p}{t^s} & q=\infty
\end{cases}, 
\ee
with parameters $s>0, p,q\in[1,\infty]$, and $r=\lfloor s\rfloor+1$. Then the corresponding Besov ball is defined by
\be
\cB_{p,q}^s(L) \triangleq \{f\in L^p[0,1]: \|f\|_{\cB_{p,q}^s} \le L\}.
\ee

We note that the definition of Besov ball is a generalization of H\"{o}lder ball via the relationship $H(s,L')=\cB_{\infty,\infty}^s(L)$ for non-integer $s$. Moreover, by the monotonicity of $L_p$ norms on $[0,1]$, we have
$
\cB_{p,q}^s(L) \supseteq \cB_{p',q}^{s}(L)$ for $p\le p'.
$
As a result, for $1\le p<\infty$, $\cB_{p,\infty}^s(L)\supseteq \cB_{\infty,\infty}^s(L)$ is a function class slightly larger than the H\"{o}lder ball $H(s,L')$. In this paper we work with $\cB_{p,\infty}^s(L)$ for $s\ge 0, p\geq 1$. These spaces are related to Nikolskii-spaces (see \cite{nikolski77} for relevant embeddings) and hence we shall refer them to as Nikolskii-Besov spaces throughout.

Polynomial approximations of continuous functions on compact intervals around the origin plays an important role in this paper. To introduce the basic ideas, consider the following best degree-$K$ polynomial approximation of $|\mu|^r$ on $[-1,1]$:
\be
\sum_{k=0}^K g_{K,k}^{(r)}\mu^k \triangleq \arg\min_{Q\in \mathsf{Poly}_K} \max_{\mu\in [-1,1]} |Q(\mu)-|\mu|^r|.
\ee

In order to estimate such polynomials based on a sample $X\sim \cN(\mu,1)$, we will need the notion of Hermite polynomials. In particular, the Hermite polynomial of degree $k$ defined by 
\be\label{eqn:hermite}
H_k(x) \triangleq (-1)^k \exp(\frac{x^2}{2})\cdot \frac{d^k}{dx^k}\left[\exp\left(-\frac{x^2}{2}\right)\right]. 
\ee
The properties of the Hermite polynomials in the context of estimating moments of Gaussian random variables will be important for us and are summarized in Lemma \ref{lemma:hermite}.

\section{Main Results}\label{section:main_results}
We divide our main results in two subsections based on the non-even or even nature of $r$. In particular, the construction of our estimator changes according to this distinction of $r$. However, before we go into the details of these constructions, we need a few definitions.

Consider the kernel projection $f_h(x)$ of $f(x)$ defined as
\be\label{eqn:kernel_projection}
f_h(x) = \int_0^1 f(u)\cdot \frac{1}{h}K_{M}\left(\frac{x-u}{h}\right)du, \quad x\in [0,1], 
\ee
where $K_M(\cdot)$ is a kernel which maps all polynomials of degree at most $M$ to themselves, and $\int_0^1 |K_M(u)|^{M+1}du<\infty$. The choice of $M$ will be clear from the statements of the main results in Section \ref{section:main_results}. We assume that $K_M$ is supported on $[-\frac{1}{2},\frac{1}{2}]$, and the boundary of $f_h(x)$ (i.e., $x\in [0,h/2]$ or $x\in [1-h/2,1]$) is handled using the same way as \cite{lepski1999estimation}.

The corresponding unbiased kernel estimator of $f_h(x)$ defined as
\be
\tilde{f}_h(x) = \int_0^1 \frac{1}{h}K_M\left(\frac{x-u}{h}\right)dY(u)
\ee
admits a usual decomposition into deterministic and stochastic components as follows: 
\be\label{eqn:gauss_model}
\tilde{f}_h(x) = f_h(x) + \lambda_h\xi_h(x).
\ee
Above
\be\label{def:lambda_h}
\lambda_h &= \sqrt{\bE\left(\int_0^1 \frac{1}{h}K_M\left(\frac{x-u}{h}\right)\cdot \frac{\sigma}{\sqrt{n}}dB(u)\right)^2} = \frac{\sigma\|K_M\|_2}{\sqrt{nh}},\\
\xi_h(x) &= \frac{1}{h\lambda_h} \int_0^1 K_M\left(\frac{x-u}{h}\right)\cdot \frac{\sigma}{\sqrt{n}}dB(u).
\ee
Clearly $\xi_h(x)\sim\cN(0,1)$ and random variables $\xi_h(x)$ and $\xi_h(y)$ are independent when $|x-y|>h$.

The reason for introducing the kernel projection estimator is simple and standard in nonparametric statistics. 
In particular, for a suitable chosen bandwidth $h$, $\|f_h-f\|_r$ is small, and it suffices to consider estimation of $\|f_h\|_r$ based on the Gaussian model \eqref{eqn:gauss_model}. Indeed, a crucial part is to estimate $|f_h(x)|^r$. Whereas, for $r$ an even positive integer, this task is relatively simpler \cite{lepski1999estimation}, the case of non-even $r$ poses a more subtle problem due to non-differentiability of the function $u\mapsto |u|^r$ near the origin. Consequently, for such cases, recent techniques for estimating non-smooth functionals needs to employed. 

\subsection{Non-even $r$}\label{section:r_odd} In this case, the construction depends on the best polynomial approximation of the function $u\mapsto |u|^r$ over the interval $[-1,1]$ and borrows heavily from a recent line of work by \cite{cai2011testing,jiao2015minimax,han2016minimax,wu2016minimax}. The general principle of the construction goes along the following heuristic steps.
\begin{itemize}
	\item Approximate $f$ by a kernel projection $f_h$ (as in \eqref{eqn:kernel_projection}) and consider estimating $\|f_h\|_r$ instead at a cost of incurring a truncation bias.
	
	\item  A ``large value" of the kernel estimator $|\tilde{f}_h(x)|$ (referred to as ``smooth regime" hereafter) gives indication of a corresponding ``large value" of $|f_h(x)|$ and a plug-in type estimator for $|f_h(x)|^r$ is reasonable.
	
	\item A relatively ``small value" of $|\tilde{f}_h(x)|$ (referred to as ``non-smooth regime" hereafter) gives indication of a correspondingly ``small value" of $|f_h(x)|$ and a plug-in type estimator for $|f_h(x)|^r$ is no-longer reasonable owing to the non-differentiable nature of the absolute function near the origin. In this case, similar to \cite{cai2011testing,jiao2015minimax,wu2016minimax,han2016minimax}, an estimator based on the best polynomial approximation of the function $u\mapsto |u|^r$ is employed. 
	
	\item The final estimator integrates over this two regimes of $|\tilde{f}_h(x)|$ followed by an optimal choice of $h$ to trade off squared bias and variance.
\end{itemize}
Below we make the program laid down above more concrete and refer readers to \cite{jiao2015minimax,han2016minimax} for a detailed discussion of the general principle of estimating non-smooth functionals. The same procedure also works for estimating other non-smooth nonparametric functionals, e.g., the differential entropy \cite{han-jiao-weissman-wu2017minimax}. It turns out that the treatment for $r=1$ is easier, more transparent and slightly different than general non-even $r>1$. Consequently, we will present the case for $r=1$ first for the sake of clarity followed by the more general case. 

Any candidate estimator below will be defined by the parameter tuple $(h,c_1,c_2,\epsilon)$ where $h$ is the bandwidth of the kernel projection \eqref{eqn:kernel_projection} and $(c_1,c_2,\epsilon)$ are constants depending on the known parameters of the problem (i.e. the variance $\sigma^2$ and radius of Besov balls $L$) to be specified later. More specifically, $c_1$ will be chosen as large as possible whereas $c_2$ and $\epsilon$ will be desirably small. In general we suppress the dependence of our estimators on $(c_1,c_2,\epsilon)$. However, our adaptive estimator makes a data driven choice of the bandwidth $h$. Therefore we index our estimator by this bandwidth, namely, $T_h$.

\subsubsection{$r=1$} \label{section:r_equals_one}
We follow the general principle laid down above. 
Recall that $\{g_{K,k}^{(1)}\}_{k=0}^K$ are the coefficients of the best polynomial approximation of $u\mapsto |u|$ of degree $K$ on $[-1,1]$ and $H_k$ is the Hermite polynomial of degree $k$. With this in mind, the construction of our estimator $T_h$ for every bandwidth $h$ can be described the following steps. 
\begin{enumerate}
	\item [(I)]  Using the sample splitting technique for the Brownian motion \cite{nemirovski2000topics} to obtain two independent observations $\tilde{f}_{h,1}(x)$ and $\tilde{f}_{h,2}(x)$ for any $x\in [0,1]$. This reduces the effective sample size $n$ by half and for simplicity of notation we redefine $n/2$ as $n$.

	\item[(II)] For any $x$ define an estimator of $|f_h(x)|$ as
	\be
	T_h(x) = |\tilde{f}_{h,1}(x)| \mathbbm{1}(|\tilde{f}_{h,2}(x)|\ge c_1\lambda_h\sqrt{\ln n}) + \tilde{P}(\tilde{f}_{h,1}(x)) \mathbbm{1}(|\tilde{f}_{h,2}(x)|< c_1\lambda_h\sqrt{\ln n}),
	\ee
	where \be
	\tilde{P}(u) &= \min\{\max\{P(u),-n^{2\epsilon}\lambda_h\}, n^{2\epsilon}\lambda_h\}, \quad \text{with}\\
	P(u) &= \sum_{k=0}^K g_{K,k}^{(1)} \left(2c_1\lambda_h\sqrt{\ln n}\right)^{1-k}\cdot\lambda_h^kH_k\left(\frac{u}{\lambda_h}\right),
	\ee
	where $K=\lceil c_2\ln n \rceil$.

	\item [(III)] Finally, an estimator for $\|f\|_1$ is defined as  
	\be
	T_h \triangleq \min\left\{L, \max\left\{0,\int_0^1 T_h(x)dx\right\}\right\}.
	\ee
\end{enumerate}

The following Theorem describes the mean squared error of estimating $\|f\|_1$ by $T_h$ over $\cB_{p,\infty}^s(L)$. 

\begin{theorem}\label{theorem:lone_bias_var}
	Choose $M>\lceil s\rceil$ and consider the corresponding kernel projection $f_h$ based on $K_M$ defined by \ref{eqn:kernel_projection}. Suppose $(h,c_1,c_2,\epsilon)$ satisfy $4c_1^2\ge c_2, c_2\ln n\ge 1, c_1>8, 7c_2\ln 2< \epsilon\in (0,1)$. Then for any $p\in [1,\infty]$, we have
	\be 
	\left(\sup_{f\in \cB_{p,\infty}^s(L)}\E_f\left(T_h-\|f\|_1\right)^2\right)^{\frac{1}{2}}&\leq C \left(h^s+\frac{1}{\sqrt{nh\ln{n}}}+\frac{1}{n^{(1-\epsilon)/2}}\right),
	\ee
	for a constant $C>0$ depending on $s,p,L,\sigma$.
\end{theorem}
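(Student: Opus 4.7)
My plan is to reduce the problem to a pointwise bias--variance analysis after first absorbing the kernel-approximation bias. Using that the final clipping of $T_h$ to $[0,L]$ only reduces error,
\[
\E_f(T_h - \|f\|_1)^2 \le 2(\|f_h\|_1 - \|f\|_1)^2 + 2\,\E_f\!\left(\int_0^1 T_h(x)\,dx - \|f_h\|_1\right)^2.
\]
Standard approximation theory for kernels reproducing polynomials of degree $M > \lceil s\rceil$ gives $\|f_h - f\|_p \lesssim h^s$ over $\cB_{p,\infty}^s(L)$, hence $\|f_h - f\|_1 \le \|f_h-f\|_p \lesssim h^s$ on $[0,1]$. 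For the stochastic remainder, Jensen's inequality bounds the squared bias by $\sup_x |\E_f T_h(x) - |f_h(x)||^2$, while the independence of $\xi_h(x)$ and $\xi_h(y)$ whenever $|x-y| > h$, together with Cauchy--Schwarz, yields $\var_f\!\left(\int T_h(x)\,dx\right) \le 2h\sup_x \var_f(T_h(x))$.

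For the pointwise bias, I would split on the event $\{|\tilde f_{h,2}(x)| \ge c_1\lambda_h\sqrt{\ln n}\}$. Since $\tilde f_{h,2}$ and $\tilde f_{h,1}$ are independent, the two conditional expectations decouple. In the smooth regime, the plug-in $|\tilde f_{h,1}(x)|$ has bias $\lesssim \lambda_h$, which is dominated by $\lambda_h/\sqrt{\ln n}$ once $|f_h(x)| \gtrsim c_1\lambda_h\sqrt{\ln n}$; a Gaussian tail bound with $c_1>8$ controls the probability that $\tilde f_{h,2}(x)$ crosses the threshold while $f_h(x)$ does not by $n^{-c_1^2/2}$, contributing at most $n^{-(1-\epsilon)/2}$ after accounting for the truncation scale. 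In the non-smooth regime, the Hermite identity $\E[\lambda_h^k H_k(\tilde f_{h,1}(x)/\lambda_h)] = f_h(x)^k$ from Lemma \ref{lemma:hermite} gives $\E P(\tilde f_{h,1}(x)) = a \cdot Q_K(f_h(x)/a)$ with $a = 2c_1\lambda_h\sqrt{\ln n}$ and $Q_K$ the best polynomial approximation of $|u|$ on $[-1,1]$. The classical $O(1/K)$ Bernstein rate for that approximation, combined with $K = \lceil c_2\ln n\rceil$, yields bias $\lesssim a/K \asymp \lambda_h/\sqrt{\ln n} = 1/\sqrt{nh\ln n}$.

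For the variance, in the smooth regime $|\tilde f_{h,1}(x)|$ contributes only $\lambda_h^2$. In the non-smooth regime I would expand $P(\tilde f_{h,1}(x)) = \sum_{k=0}^K c_k H_k(\tilde f_{h,1}(x)/\lambda_h)$ with $c_k = g_{K,k}^{(1)}(2c_1\lambda_h\sqrt{\ln n})^{1-k}\lambda_h^k$; the second-moment identity $\E H_k(Z)^2 = k!$ under $Z\sim\cN(0,1)$ together with a change of measure to $\cN(f_h(x)/\lambda_h, 1)$ and standard growth estimates for the coefficients $|g_{K,k}^{(1)}|$ of the best approximation of $|u|$ controls the raw variance at $\lambda_h^2 \cdot n^{O(c_2)}$. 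The truncation of $P$ at $n^{2\epsilon}\lambda_h$, together with the hypotheses $7c_2\ln 2 < \epsilon$ and $4c_1^2 \ge c_2$, keeps $\var(\tilde P) \lesssim \lambda_h^2 n^{o(1)}$ on the typical event, while the tail event $\{P \ne \tilde P\}$ contributes at most $n^{-(1-\epsilon)}$ to the squared error via Markov. Assembling the bias bound $\sup_x|\E_f T_h(x) - |f_h(x)||^2 \lesssim 1/(nh\ln n) + n^{-(1-\epsilon)}$ with the variance contribution $2h\sup_x\var_f(T_h(x)) \lesssim n^{-1+o(1)}$ and the kernel term $h^{2s}$, then taking square roots, delivers the stated rate.

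The main technical obstacle will be the joint variance/truncation analysis: the coefficients of the best polynomial approximation of $|u|$ can grow exponentially in $K$, and only the simultaneous choices of $K = c_2\ln n$ with small $c_2$, large threshold constant $c_1$, and truncation level $n^{2\epsilon}\lambda_h$ keep the bias, variance, and tail-event contribution at their respective target orders. The precise inequalities $4c_1^2 \ge c_2$, $c_1 > 8$, and $7c_2\ln 2 < \epsilon$ in the hypothesis are exactly what is needed to balance these competing requirements.
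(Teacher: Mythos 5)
Your proposal follows essentially the same route as the paper: decompose into kernel-approximation bias, stochastic bias, and variance; use independence of $\xi_h(x), \xi_h(y)$ at distance $>h$ to collapse the variance of the integral to $O(h\sup_x\var T_h(x))$; split pointwise into the smooth regime (plug-in, analyzed via Gaussian tail bounds) and non-smooth regime (best polynomial approximation of $|u|$ rescaled to $[-a,a]$, unbiasedly estimated via Hermite polynomials, with the $O(1/K)$ Bernstein rate giving bias $\asymp\lambda_h/\sqrt{\ln n}$); and control the variance via coefficient growth bounds for the Chebyshev/best-approximation polynomial plus the truncation at $n^{2\epsilon}\lambda_h$. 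This is exactly the paper's argument, packaged in the paper as Lemma \ref{lemma:lone_xi_bias_var} (with supporting Lemmas \ref{lemma:small_regime} and \ref{lemma:large_regime}) followed by the integration/covariance calculation.

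One small imprecision worth flagging: in your smooth-regime discussion you write that the plug-in bias is $\lesssim\lambda_h$ which is ``dominated by $\lambda_h/\sqrt{\ln n}$'' — that is backwards as stated. The actual reason the plug-in works there is that when $|f_h(x)|\gtrsim\lambda_h\sqrt{\ln n}$, the bias of $|\tilde f_{h,1}(x)|$ for $|f_h(x)|$ is \emph{exponentially} small (of order $\lambda_h\,n^{-c_1^2/8}$, as in Lemma \ref{lemma:large_regime}), not merely $O(\lambda_h)$; the $O(\lambda_h)$ bound alone would not suffice. Also, the regime-misclassification probability $n^{-c_1^2/8}$ (times the truncation level $n^{2\epsilon}\lambda_h$) is in the paper absorbed into the $1/\sqrt{nh\ln n}$ bias term rather than into the $n^{-(1-\epsilon)/2}$ variance term; that attribution is cosmetic but slightly off. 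Neither issue changes the method or the rate.
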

As an immediate consequence of Theorem \ref{theorem:lone_bias_var}, by choosing $\epsilon>0$ sufficiently small (i.e., choose a small $c_2$) and by setting
\be
h\asymp (n\ln n)^{-\frac{1}{2s+1}}, 
\ee
we have the following result. 

\begin{corollary}\label{corollary:lone_upper_bound}
	Under the assumptions of Theorem \ref{theorem:lone_bias_var}, for $h\asymp (n\ln n)^{-\frac{1}{2s+1}}$ we have
	\be
	\left(\sup_{f\in\cB_{p,\infty}^s(L)}\bE_f\left(T_h-\|f\|_1\right)^2\right)^{\frac{1}{2}}\leq  C(n\ln n)^{-\frac{s}{2s+1}},
	\ee
	for a constant $C>0$ depending on $s,p,L,\sigma$.
\end{corollary}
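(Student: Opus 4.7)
The plan is to simply substitute the prescribed bandwidth $h \asymp (n\ln n)^{-1/(2s+1)}$ into the three-term upper bound from Theorem \ref{theorem:lone_bias_var} and check that each term is dominated by the claimed rate $(n\ln n)^{-s/(2s+1)}$. The point of the choice of $h$ is that it balances the deterministic bias $h^s$ against the stochastic polynomial-approximation term $1/\sqrt{nh\ln n}$; the third term, the truncation contribution $n^{-(1-\epsilon)/2}$, will be shown to be strictly negligible once $\epsilon$ is taken small enough.

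Concretely, I would carry out the following steps. First, compute $h^s \asymp (n\ln n)^{-s/(2s+1)}$ by direct substitution. Second, compute
\be
nh\ln n \;\asymp\; n\cdot (n\ln n)^{-1/(2s+1)} \cdot \ln n \;=\; (n\ln n)^{1-1/(2s+1)} \;=\; (n\ln n)^{2s/(2s+1)},
\ee
so that $1/\sqrt{nh\ln n} \asymp (n\ln n)^{-s/(2s+1)}$, matching the first term. Thus the first two terms already supply the target rate.

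Third, I deal with the truncation term. Since $\frac{s}{2s+1} < \frac{1}{2}$, there exists $\epsilon_0 \in (0,1)$ with $\frac{1-\epsilon_0}{2} > \frac{s}{2s+1}$; any $\epsilon \in (0, \frac{1}{2s+1})$ works (say $\epsilon = \frac{1}{2(2s+1)}$). For such $\epsilon$ one has $n^{-(1-\epsilon)/2} = o\bigl((n\ln n)^{-s/(2s+1)}\bigr)$. It remains to verify that this choice is compatible with the hypotheses of Theorem \ref{theorem:lone_bias_var}: first fix such an $\epsilon$, then choose $c_2 > 0$ small enough so that $7c_2 \ln 2 < \epsilon$ (for instance $c_2 = \epsilon/(8 \ln 2)$), and finally pick $c_1$ large enough so that $c_1 > 8$ and $4c_1^2 \ge c_2$; the condition $c_2 \ln n \ge 1$ holds for all $n$ sufficiently large and is harmless for an asymptotic statement. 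This gives a single valid tuple $(c_1, c_2, \epsilon)$ producing the claimed bound.

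There is no real obstacle: the corollary is a routine balancing exercise given the theorem, and all substantive difficulty is already absorbed into Theorem \ref{theorem:lone_bias_var}. The only mild point to double-check is parameter compatibility in the hypothesis list, which is why I would state the specific ordering (pick $\epsilon$ first, then $c_2$, then $c_1$) explicitly.
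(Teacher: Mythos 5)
Your proposal is correct and matches the paper's argument: the paper disposes of the corollary in one sentence (``by choosing $\epsilon>0$ sufficiently small (i.e., choose a small $c_2$) and by setting $h\asymp (n\ln n)^{-\frac{1}{2s+1}}$''), and your writeup simply spells out the same substitution, the same balancing of $h^s$ against $(nh\ln n)^{-1/2}$, and the same observation that $\epsilon<\tfrac{1}{2s+1}$ makes the truncation term subdominant, together with a correct check that such $(c_1,c_2,\epsilon)$ can be chosen compatibly with the hypotheses of Theorem \ref{theorem:lone_bias_var}.
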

The same asymptotic upper bound was demonstrated with a different estimator by \cite{lepski1999estimation}. However, their results were over H\"{o}lder Balls corresponding to $p=\infty$ and require a uniform upper bound on $\|f\|_{\infty}$ for all $f$. In contrast, we do not require any such knowledge of upper bound and produce results for any $p\in [1,\infty]$. Moreover, the next theorem shows that our results are rate optimal in terms of matching lower bounds for any $1\leq p<\infty$. 
\begin{theorem}\label{theorem:lone_lower_bound}
	For any $1\leq p<\infty$, we have
	\be
	\left(\inf_{\hat{T}}\sup_{f\in\cB_{p,\infty}^s(L)}\bE_f\left(\hat{T}-\|f\|_1\right)^2\right)^{\frac{1}{2}}\geq C' (n\ln n)^{-\frac{s}{2s+1}},
	\ee
	for a constant $C'>0$ depending on $s,p,L,\sigma$ and where the infimum above is taken over all measurable maps of $\{Y(t)\}_{t\in[0,1]}$.
\end{theorem}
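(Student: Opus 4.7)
The plan is to apply a two-fuzzy-hypothesis Le Cam argument built on the duality between best polynomial approximation of $u\mapsto |u|$ and the existence of moment-matching probability measures, in the spirit of \cite{cai2011testing}. I fix the bandwidth $h \asymp (n \ln n)^{-1/(2s+1)}$ from Corollary~\ref{corollary:lone_upper_bound} and partition $[0,1]$ into $m \asymp 1/h$ disjoint cells of length $h$ with centers $x_1,\ldots,x_m$. Pick a smooth atom $\phi_0$ supported in $(-1/2,1/2)$ whose translates $\phi_{h,i}(x):=\phi_0((x-x_i)/h)$ have disjoint supports, and consider random signals
\[
f(x)=\sum_{i=1}^m u_i\,\phi_{h,i}(x),
\]
with i.i.d.\ amplitudes $u_i$ drawn from one of two laws $\mu_0,\mu_1$ supported in $[-B,B]$; the two resulting priors $\Pi_0,\Pi_1$ play the role of the fuzzy hypotheses.

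For $K\asymp \ln n$, invoke the LP dual of best polynomial approximation of $|u|$ on $[-1,1]$: there is a signed measure $\nu$ on $[-1,1]$ with $\|\nu\|_{\mathrm{TV}}\le 1$, $\int u^k\,d\nu=0$ for $k=0,\ldots,K$, and $\int|u|\,d\nu \gtrsim 1/K$. Its Jordan decomposition yields probability measures on $[-1,1]$ with matching first $K$ moments and gap $|\E_{\mu_0}|u|-\E_{\mu_1}|u||\gtrsim 1/K$; after rescaling to $[-B,B]$ the gap becomes $\gtrsim B/K$. I next calibrate $B$ so that $\|f\|_{\cB_{p,\infty}^s}\le L$ almost surely under either prior. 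Using the disjoint-atom structure one obtains $\|f\|_{\cB_{p,\infty}^s}\asymp B\,h^{-s}$, but the $\ell_p$-aggregation implicit in the Nikolskii--Besov norm (strictly weaker than the sup-norm aggregation of a H\"older ball) affords slack that is crucial for matching the upper-bound rate when $p<\infty$.

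The central analytic step is to bound $\chi^2(\Pi_0\bP,\Pi_1\bP)$. Filtering by $K_M$ effectively decouples the cells, producing $m$ essentially independent marginals $\tilde f_h(x_i)\sim \cN(u_i,\lambda_h^2)$. Expanding the mixture density $p_{\mu_j}(y)=\int \varphi_{\lambda_h}(y-u)\,d\mu_j(u)$ (where $\varphi_{\lambda_h}$ is the $\cN(0,\lambda_h^2)$ density) in the Hermite basis \eqref{eqn:hermite} and invoking orthogonality of Hermite polynomials with respect to the Gaussian measure, the moment-matching property collapses the low-order terms and gives a Parseval-type identity
\[
\int \frac{(p_{\mu_0}-p_{\mu_1})^2}{\varphi_{\lambda_h}}\,dy \;=\; \sum_{k>K}\frac{|\E_{\mu_0}[u^k]-\E_{\mu_1}[u^k]|^2}{k!\,\lambda_h^{2k}} \;\lesssim\; \Bigl(\tfrac{eB^2}{K\lambda_h^2}\Bigr)^{K}
\]
by Stirling. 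Tensorising across the $m$ cells and choosing $B,K,h$ to satisfy $B/\lambda_h\lesssim \sqrt{K}$ and $K\gtrsim \ln m$ keeps $\chi^2(\Pi_0\bP,\Pi_1\bP)=O(1)$.

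To conclude, note that $\|f\|_1=\sum_{i=1}^m h\|\phi_0\|_1|u_i|$ concentrates around $\|\phi_0\|_1\,\E_{\mu_j}|u|$ with standard deviation $\lesssim \sqrt{h}\,B$, which is negligible compared with the mean gap $\gtrsim B/K$; Le Cam's two-point inequality then yields $\inf_{\hat T}\sup_f \E_f(\hat T-\|f\|_1)^2 \gtrsim (B/K)^2$. The hard part, and the locus of most of the work, is the simultaneous tuning of the triple $(h,B,K)$ so that (i) the Nikolskii--Besov constraint is deterministically satisfied, (ii) the tensorised Hermite bound on $\chi^2$ is $O(1)$, and (iii) the resulting lower bound $B/K$ matches $(n\ln n)^{-s/(2s+1)}$ rather than a weaker poly-logarithmic surrogate. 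It is precisely at this balance that the hypothesis $p<\infty$ (excluding pure H\"older balls) becomes essential, allowing the construction to charge mass against the weaker $\ell_p$-aggregation in the Besov norm and thereby closing the poly-logarithmic gap left by \cite{lepski1999estimation}.
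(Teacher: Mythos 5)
Your skeleton (parametric reduction to i.i.d.\ Gaussian means on $\asymp h^{-1}$ cells, two fuzzy hypotheses via moment-matched priors, Hermite/orthogonality bound on the $\chi^2$ divergence, Le Cam) is the same as the paper's, but the proposal has a genuine gap at exactly the point you defer to ``simultaneous tuning of $(h,B,K)$'': you never construct priors that are compatible with the Nikolskii--Besov constraint at the amplitude scale $B$ needed for the rate. For bump signals with disjoint supports the Besov-ball condition is not $\|f\|_{\cB_{p,\infty}^s}\asymp Bh^{-s}\le L$ as you write; it constrains the \emph{empirical $\ell_p$-average} of the amplitudes, roughly $\bigl(\frac{1}{m}\sum_i |u_i|^p\bigr)^{1/p}\lesssim h^s$. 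To get the rate $(n\ln n)^{-s/(2s+1)}$ you must take $B\asymp \lambda_h\sqrt{\ln n}\asymp h^s\ln n$ (so that the gap $B/K\asymp h^s$ with $K\asymp\ln n$), i.e.\ the priors must be supported up to $\pm\sqrt{\ln n}$ in noise units. But the extremal measures coming from the plain duality to best polynomial approximation of $|u|$ on $[-1,1]$, rescaled to $[-B,B]$, have $p$-th moments of order $B^p$, which violates the Besov constraint by a factor $(\ln n)^p$; conversely, if you shrink $B$ to $\asymp h^s$ so membership holds, the $\chi^2$ calculation forces $K\gtrsim \ln n/\ln\ln n$ and the gap $B/K$ is short of the target by a poly-logarithmic factor. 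So with your tools the three requirements (Besov membership, bounded $\chi^2$, gap $\asymp h^s$) cannot be met simultaneously; this is precisely the obstruction that makes the problem different from \cite{cai2011testing}.

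The paper resolves this by demanding a third property of the priors, the $p$-th moment bound \eqref{eq.moment_bound}, alongside moment matching \eqref{eq.moment_match} and separation \eqref{eq.separation}, and such a pair cannot be produced by the $|u|$-approximation duality alone. Instead it applies the duality (Lemma \ref{lem.measure}) to the singular function $t^{-q+r/2}$ with $q=\lceil p/2\rceil$ on the interval $[(\ln N)^{-2},1]$, proves a new lower bound for the corresponding rational-polynomial approximation error (Lemma \ref{lem.approx}), tilts the resulting extremal measures by $((\ln N)^2 x)^{-q}$ plus an atom at zero, and symmetrizes, which yields priors supported on $[-\sqrt{\ln N},\sqrt{\ln N}]$ with $\asymp\ln N$ matched moments, $|t|$-mean gap $\asymp(\ln N)^{-1/2}$, and $p$-th moment $\le(\ln N)^{-p/2}$. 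A further step you omit is also needed: under the product priors the Besov ($\ell_p$) constraint holds only with high probability, not almost surely, so the paper conditions $\mu_i^{\otimes N}$ on the event $E_i$ and controls the induced total-variation perturbation before invoking Lemma \ref{lemma.tsybakov}. Without these ingredients the argument as proposed either leaves the Besov ball or loses a logarithmic factor in the rate.
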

Corollary \ref{corollary:lone_upper_bound} along with Theorem \ref{theorem:lone_lower_bound} provide a complete picture of the minimax rate of estimation of $\|f\|_1 $ over $\cB_{p,\infty}^s(L)$ for any $1\leq p<\infty$. We remark that our lower bound proof does not provide matching results for $p=\infty$, and thus the gap in the exact framework considered by \cite{lepski1999estimation} remains. However, our result provides strong moral evidence that upper bound of Theorem 2.1 of \cite{lepski1999estimation} is rate optimal and it is the lower bound that stands to be improved.

\subsubsection{$r>1$, non-even}\label{section:r_gtrone_odd}

For $r>1$ not an even integer, the general philosophy of the construction of a candidate estimator $T_h$ is similar to the case of $r=1$ in Section \ref{section:r_equals_one} and follows in three steps as before. However, the simple plug-in principle employed in the smooth regime (i.e., when $|\tilde{f_h}(x)|$ is large) incurs significant bias. As a consequence, Step II of the construction needs to be modified based on the following heuristics of Taylor expansion. The reason why we require $r$ to be non-even is that the function $u\mapsto |u|^r$ is not an analytical function in this case.

Using the notation from Section \ref{section:r_equals_one}, the heuristic Taylor expansion gives
\be\label{eqn:Taylor}
f_h(x)^r \approx \sum_{k=0}^{R} \frac{r(r-1)\cdots(r-k+1)}{k!}(\tilde{f}_{h,1}(x))^{r-k}(f_h(x)-\tilde{f}_{h,1}(x))^k,
\ee
where we choose $R=\lfloor 2r\rfloor$ to reduce the approximation error of \eqref{eqn:Taylor} to a desired order (cf. Lemma \ref{lemma:large_regime_lr}). Based on this approximate Taylor expansion, the right hand side of \eqref{eqn:Taylor} is a natural candidate estimator for $f_h(x)^r$. However, such an estimator is infeasible due to its dependence on unknown $f_h(x)$. Consequently, we replace $(f_h(x)-\tilde{f}_{h,1}(x))^k$ in \eqref{eqn:Taylor} by an unbiased estimator constructed as follows: let $\tilde{f}_{h,2}(x)$ be an independent copy of $\tilde{f}_{h,1}(x)$ obtained via the sample splitting technique of the Brownion motion \cite{nemirovski2000topics}. Then Lemma \ref{lemma:hermite} gives
\begin{align*}
\bE_{\tilde{f}_{h,2}}\left[ \sum_{j=0}^k \binom{k}{j}\lambda_h^jH_j\left(\frac{\tilde{f}_{h,2}(x)}{\lambda_h}\right)(-\tilde{f}_{h,1}(x))^{k-j} \right] &= \sum_{j=0}^k \binom{k}{j} f_h(x)^j (-\tilde{f}_{h,1}(x))^{k-j} \\
&= (f_h(x) - \tilde{f}_{h,1}(x))^k, 
\end{align*} 
i.e., the estimator inside the expectation is an unbiased estimator of $(f_h(x) - \tilde{f}_{h,1}(x))^k$. 

With the above intuition and notation in mind, the construction of our estimator $T_h$ for every bandwidth $h$ can be described the following steps. 
\begin{enumerate}
	\item [(I)]  Using the sample splitting technique for the Brownian motion \cite{nemirovski2000topics} to obtain three independent observations $\tilde{f}_{h,1}(x), \tilde{f}_{h,2}(x)$ and $\tilde{f}_{h,3}(x)$ for any $x\in [0,1]$. Redefine $n/3$ as $n$ for simplicity.

	\item[(II)] For any $x\in [0,1]$ define an estimator of $|f_h(x)|^r$ as
	\be
	T_h(x) &= \tilde{P}_r(\tilde{f}_{h,1}(x))\mathbbm{1}(|\tilde{f}_{h,3}(x)|\le c_1\lambda_h\sqrt{\ln n}) \\
	&\qquad + S_{\lambda_h}(\tilde{f}_{h,1}(x), \tilde{f}_{h,2}(x))\mathbbm{1}(\tilde{f}_{h,3}(x)> c_1\lambda_h\sqrt{\ln n}) \\
	&\qquad + S_{\lambda_h}(-\tilde{f}_{h,1}(x), -\tilde{f}_{h,2}(x))\mathbbm{1}(\tilde{f}_{h,3}(x)<- c_1\lambda_h\sqrt{\ln n}), 
	\ee
	where 
	\be 
	S_{\lambda_h}(u,v) &= \sum_{k=0}^{R} \frac{r(r-1)\cdots(r-k+1)}{k!}u^{r-k}\cdot\left(\sum_{j=0}^k \binom{k}{j}\lambda_h^jH_j\left(\frac{v}{\lambda_h}\right)(-u)^{k-j}\right) \\
	&\qquad \cdot \mathbbm{1}\left(u\ge \frac{c_1}{4}\lambda_h\sqrt{\ln n}\right), \label{eq.S_lambda}
	\ee
	and
	\be
	\tilde{P}_r(t) &= \min\{\max\{P_r(t),-\lambda_h^rn^{2\epsilon}\}, \lambda_h^rn^{2\epsilon} \},\quad\text{with}\\
	P_r(t) &= \sum_{k=0}^K g_{K,k}^{(r)}\left(2c_1\lambda_h\sqrt{\ln n}\right)^{r-k}\cdot \lambda_h^kH_k(\frac{t}{\lambda_h}),
	\ee
	where $K=\lceil c_2\ln n\rceil$, $H_k$ is the Hermite polynomial of degree $k$ and $\{g_{K,k}^{(r)}\}_{k=0}^K$ is the coefficient of the best polynomial approximation of $u\rightarrow |u|^r$ on $[-1,1]$.
	
	\item [(III)] Finally, the overall estimator for $\|f\|_r$ is 
	\be
	T_h \triangleq \min\left\{L, \left(\max\left\{0,\int_0^1 T_h(x)dx\right\}\right)^{\frac{1}{r}}\right\}.
	\ee
	
\end{enumerate}

The following Theorem describes the optimal mean squared error of estimating $\|f\|_r$ by $T_h$ over $\cB_{p,\infty}^s(L)$. 

\begin{theorem}\label{theorem:lr_odd_bias_var}
	Let $r>1$ be non-even and $p\in [r,\infty]$. Choose $M>\lceil s\rceil$ and consider the corresponding kernel projection $f_h$ based on $K_M$ defined by \eqref{eqn:kernel_projection}. Suppose that $(h,c_1,c_2,\epsilon)$ satisfy $c_1^2\ge 16$, $c_2\ln n\ge 1, 4c_1^2\ge c_2$, $7c_2\ln 2<\epsilon\in(0,1)$ and $h=(n\ln{n})^{-\frac{1}{2s+1}}$. Then 
	\be 
	\left(\sup_{f\in \cB_{p,\infty}^s(L)}\E_f\left(T_h-\|f\|_r\right)^2\right)^{\frac{1}{2}}&\leq C(n\ln n)^{-\frac{s}{2s+1}},
	\ee
	for a constant $C>0$ depending on $s,p,r,L,\sigma$.
\end{theorem}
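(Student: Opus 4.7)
The plan is to mirror the proof of Theorem \ref{theorem:lone_bias_var}, upgraded with the Taylor-expansion-based smooth-regime estimator $S_{\lambda_h}$. Write $A=\int_0^1 T_h(x)\,dx$ and $B=\int_0^1 |f_h(x)|^r\,dx=\|f_h\|_r^r$. First, the standard Besov approximation bound, using $p\ge r$ and $M>\lceil s\rceil$, yields $|\,\|f\|_r-\|f_h\|_r\,|\le \|f-f_h\|_r\le\|f-f_h\|_p\lesssim h^s$. Second, for the passage from the squared error of $A$ to that of $T_h$, use $\|f_h\|_r\le L$ (valid since $p\ge r$ and $\|f\|_p\le L$) together with the clipping in the definition of $T_h$ and the inequality $|u^{1/r}-v^{1/r}|\le |u-v|^{1/r}$ for $u,v\ge 0$: after a case analysis comparing $\|f_h\|_r$ to a threshold $\delta_n$ (optimised at the end), this reduces $\E_f(T_h-\|f_h\|_r)^2$ to control of $\E_f(A-B)^2$ plus a polynomially small $\delta_n$-term.

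For each fixed $x$, I would analyse $T_h(x)-|f_h(x)|^r$ by conditioning on the independent sample $\tilde{f}_{h,3}(x)$, which selects the regime. In the smooth regime $|\tilde{f}_{h,3}(x)|>c_1\lambda_h\sqrt{\ln n}$, a Gaussian tail bound (using $c_1^2\ge 16$) shows that $|f_h(x)|\gtrsim c_1\lambda_h\sqrt{\ln n}$ and hence $|\tilde{f}_{h,1}(x)|\ge\frac{c_1}{4}\lambda_h\sqrt{\ln n}$ except on an event of negligible probability; on this good event Lemma \ref{lemma:hermite}, applied conditionally on $\tilde{f}_{h,1}(x)$, identifies $S_{\lambda_h}(\tilde{f}_{h,1},\tilde{f}_{h,2})$ as an unbiased estimator of the order-$R$ Taylor polynomial of $u\mapsto u^r$ at $\tilde{f}_{h,1}(x)$, whose residual against $f_h(x)^r$ is of order $\lambda_h^{2r}$ up to polylogs by Lemma \ref{lemma:large_regime_lr} (the choice $R=\lfloor 2r\rfloor$ being exactly calibrated for this). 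In the non-smooth regime $|\tilde{f}_{h,3}(x)|\le c_1\lambda_h\sqrt{\ln n}$, the Hermite identity $\E[\lambda_h^k H_k(\tilde{f}_{h,1}/\lambda_h)]=f_h(x)^k$ yields $\E P_r(\tilde{f}_{h,1}(x))=(2c_1\lambda_h\sqrt{\ln n})^r Q_K\bigl(f_h(x)/(2c_1\lambda_h\sqrt{\ln n})\bigr)$, where $Q_K(u)=\sum_k g_{K,k}^{(r)}u^k$ is the best degree-$K$ polynomial approximation of $|u|^r$ on $[-1,1]$; Gaussian concentration confines $|f_h(x)|\lesssim\lambda_h\sqrt{\ln n}$ on the favourable event, so the classical Bernstein--Jackson bound gives pointwise bias of order $(\lambda_h\sqrt{\ln n})^r K^{-r}$, which at $K=\lceil c_2\ln n\rceil$ is of order $\lambda_h^r$ up to polylogs. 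Regime misclassification contributes a polynomially negligible tail via a Gaussian estimate on the buffer between $c_1\lambda_h\sqrt{\ln n}$ and a quarter of it.

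For the variance, the independence of $\xi_h(x)$ and $\xi_h(y)$ whenever $|x-y|>h$ gives $\var(A)\le Ch\int_0^1\var(T_h(x))\,dx$. Pointwise, the smooth-regime variance is $O(\lambda_h^{2r})$ up to polylogs via Hermite-moment estimates; in the non-smooth regime the variance of $\tilde{P}_r(\tilde{f}_{h,1}(x))$ is controlled by combining a Hermite-moment bound for the unclipped $P_r$ on the typical event with the deterministic clipping bound $\lambda_h^{2r}n^{4\epsilon}$ on the rare event that $P_r\ne\tilde{P}_r$. Collecting the pointwise bias and variance, integrating over $x$, and adding the kernel approximation term $h^s$, every summand matches $(n\ln n)^{-s/(2s+1)}$ at the bandwidth $h=(n\ln n)^{-1/(2s+1)}$, once passed through the reduction from $A-B$ to $T_h-\|f_h\|_r$. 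The main obstacle is the non-smooth regime: the coefficients $g_{K,k}^{(r)}$ of the best polynomial approximation, as well as the Hermite polynomials $H_k$, both grow geometrically in $K=\Theta(\ln n)$, and without intervention would inflate both the bias and the variance of $\tilde{P}_r$ far beyond the target. The conditions $4c_1^2\ge c_2$ and $7c_2\ln 2<\epsilon$ in the theorem statement are calibrated precisely so that (i) $P_r=\tilde{P}_r$ outside an event of probability $n^{-\Omega(1)}$, so clipping contributes only negligibly to the bias, and (ii) the blown-up unclipped variance on that rare event, bounded deterministically by the clip level, still integrates to a contribution of the correct order.
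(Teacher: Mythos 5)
Your overall architecture matches the paper's: approximate by a Besov bound, bound pointwise bias and central moments of $T_h(x)$, integrate, and convert from the estimate of $\|f_h\|_r^r$ to an estimate of $\|f\|_r$. However, there are two concrete gaps.

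\textbf{First}, your pointwise variance bound in the smooth regime is wrong. The variance of $S_{\lambda_h}(X_1,X_2)$ with $X_1,X_2\sim\cN(\mu,\lambda_h^2)$ is of order $\lambda_h^2\mu^{2(r-1)}$ (this is precisely Lemma \ref{lemma:large_regime_lr} with $k=2$), not $O(\lambda_h^{2r})$ up to polylogs. The $\lambda_h^{2r}\mathrm{polylog}$ bound you claim holds only at the regime boundary $|f_h(x)|\asymp\lambda_h\sqrt{\ln n}$; for $|f_h(x)|$ of order one the variance is $\asymp\lambda_h^2$, which is vastly larger than $\lambda_h^{2r}$ when $r>1$. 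Consequently $\var(\int_0^1 T_h(x)\,dx)$ involves the integral $\int_0^1 |f_h(x)|^{2(r-1)}dx=\|f_h\|_{2r-2}^{2r-2}$, and controlling that quantity in terms of $\|f\|_r$ and $\|f_h\|_r$ is a non-trivial step handled in the paper via Lemma \ref{lemma:norm} (inherited from Lepski--Nemirovski--Spokoiny). Your plan omits both this $|f_h(x)|^{2(r-1)}$ dependence and the need for such a norm-interpolation lemma.

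\textbf{Second}, the reduction from $A=\int_0^1 T_h(x)\,dx$ to $T_h=\max\{0,A\}^{1/r}$ is underspecified, and the single inequality you invoke, $|u^{1/r}-v^{1/r}|\le|u-v|^{1/r}$, does not suffice across all of $\cB_{p,\infty}^s(L)$. Via Jensen it yields $\bE(T_h-\|f_h\|_r)^2\le\bigl(\bE(A-B)^2\bigr)^{1/r}$. With the correct variance bound this exponent $1/r$ kills the estimate when $\|f_h\|_r$ is bounded away from zero: there $\var(A)\asymp n^{\epsilon-1}h^{-1+1/r}$ times $O(1)$-sized norms, and raising that to the power $1/r$ produces a term decaying only like $n^{-(1-\epsilon)/r}$, which is slower than the target $(n\ln n)^{-2s/(2s+1)}$ whenever $1/r<2s/(2s+1)$. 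The paper handles this regime with the complementary linearization $|a^r-b^r|\ge b^{r-1}|a-b|$ applied with $b=\|f_h\|_r\gtrsim h^s$, turning the $r$-th root into a division by $\|f_h\|_r^{r-1}$, which cooperates with Lemma \ref{lemma:norm} to give the correct rate. Your case analysis needs that second tool explicitly; as written, the claimed reduction to control of $\E_f(A-B)^2$ does not follow from the sub-$1/r$-power inequality alone.

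Note also that your claim is phrased as if the $1/r$-power inequality reduces $\bE(T_h-\|f_h\|_r)^2$ to $\bE(A-B)^2$; it reduces it to $\bigl(\bE(A-B)^2\bigr)^{1/r}$, which for $r>1$ is a much weaker reduction and needs the far stronger bias and variance control $\bE(A-B)^2\lesssim h^{2sr}$, available only in the small-$\|f_h\|_r$ regime. Your two gaps are in fact linked: if the incorrect $\lambda_h^{2r}$ variance claim were true, the Jensen route would just barely close; it is precisely the true $|f_h(x)|^{2(r-1)}$ growth that forces the two-case split and the linearization.
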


The next theorem shows that the upper bounds in Theorem \ref{theorem:lr_odd_bias_var} are rate optimal in terms of matching lower bounds for any $1\leq p<\infty$. 
\begin{theorem}\label{theorem:lr_odd_lower_bound}
	For any non-even $r>1$ and $r\leq p<\infty$,
	\be
	\left(\inf_{\hat{T}}\sup_{f\in\cB_{p,\infty}^s(L)}\bE_f\left(\hat{T}-\|f\|_r\right)^2\right)^{\frac{1}{2}}\geq C' (n\ln n)^{-\frac{s}{2s+1}},
	\ee
	for a constant $C'>0$ depending on $s,p,r,L,\sigma$ and where the infimum above is taken over all measurable maps of $\{Y(t)\}_{t\in[0,1]}$.
\end{theorem}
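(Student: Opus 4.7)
The plan is to invoke Le Cam's two-point method, with the two ``worlds'' being priors over bump-superposition functions whose iid bump amplitudes are chosen by polynomial-approximation duality, in the spirit of \cite{cai2011testing} and Theorem \ref{theorem:lone_lower_bound}.

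Step 1 (bump construction). Take $h \asymp (n\ln n)^{-1/(2s+1)}$, partition $[0,1]$ into $N \asymp 1/h$ cells of length $h$, and place a rescaled bump $\phi_j(x) = \phi((x-x_j)/h)$ in each, with $\phi$ a fixed smooth, compactly supported profile with $\int|\phi|^r > 0$. For $f_\theta = \sum_j \theta_j\phi_j$ the disjoint-support structure gives $\|f_\theta\|_r^r = h\|\phi\|_r^r\sum_j|\theta_j|^r$, and a standard modulus-of-smoothness computation ensures $\|f_\theta\|_{\cB_{p,\infty}^s}\le L$ as long as $|\theta_j|\le M_*$ uniformly, for $M_* = c_1(\phi,L,p,s)\,h^s$. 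The assumption $r \le p$ enters through the monotonicity of $L^q$-norms on $[0,1]$ to control the $L^r$ contributions via the $L^p$ check that feeds into the Besov bound.

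Step 2 (moment-matched priors). For non-even $r$ the Stahl/Bernstein approximation rate reads $E_K(|u|^r, [-1,1]) \asymp K^{-r}$. By LP duality, for each integer $K$ there exist probability measures $\nu_0,\nu_1$ on $[-M_*, M_*]$ agreeing on all moments up to order $K$ with $|\int|u|^r\,d\nu_0 - \int|u|^r\,d\nu_1| \gtrsim M_*^r K^{-r}$. Let $\mu_i$ be the law of $f_\theta$ when $\theta_1,\dots,\theta_N$ are iid from $\nu_i$, and let $Q_i$ be the resulting marginal law of $\{Y(t)\}_{t\in[0,1]}$.

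Step 3 (chi-squared control). By independence of the Brownian increments across disjoint cells, $\chi^2(Q_0,Q_1)$ factors into $N$ per-bump chi-squared contributions. The standard Hermite-expansion bound yields $\chi^2\bigl(\nu_0\star\cN(0,\sigma_h^2), \nu_1\star\cN(0,\sigma_h^2)\bigr)\le C(eM_*^2/(K\sigma_h^2))^{K+1}$ for the effective per-bump noise variance $\sigma_h^2 \asymp \sigma^2/(nh)$. The key numerical fact is $M_*^2/\sigma_h^2 \asymp nh^{2s+1} \asymp 1/\ln n$, so taking $K$ a sufficiently large constant multiple of $\ln n$ yields $\chi^2(Q_0,Q_1)=o(1)$.

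Step 4 (concentration and conclusion). A Hoeffding-type bound applied to $\sum_j|\theta_j|^r$ (a sum of $N$ iid random variables each in $[0,M_*^r]$) gives concentration of $\|f_\theta\|_r^r$ around its mean $\|\phi\|_r^r\,\bE_{\nu_i}[|\theta|^r]$ at scale $\sqrt{h}\,M_*^r$, which is negligible compared with the gap $\gtrsim M_*^r K^{-r}$ between the two means. Passing to the $r$-th root produces concentration of $\|f_\theta\|_r$ under $\mu_0$ and $\mu_1$ around two values whose separation matches the target rate $(n\ln n)^{-s/(2s+1)}$ up to constants. Combined with the bounded $\chi^2(Q_0,Q_1)$, Le Cam's two-point inequality yields the claimed lower bound, with the infimum over all measurable functionals of $\{Y(t)\}$ obtained by standard sufficiency. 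The main obstacle is the tight calibration of $K \asymp \ln n$ against $h$ and $M_*$: the chi-squared budget forces $K$ to grow at least logarithmically while the polynomial-approximation gap degrades with $K$, and only the non-even hypothesis on $r$ permits invoking the Stahl $K^{-r}$ rate needed for a separation at the correct order.
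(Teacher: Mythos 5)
Your high-level architecture (reduction to a parametric Gaussian location model by sufficiency, moment-matched priors via polynomial-approximation duality, a $\chi^2$ bound that factorizes over disjoint cells, concentration of $\frac{1}{N}\sum|\theta_j|^r$, then the two-prior version of Le Cam) matches the paper, but Step 2 has a genuine gap that costs you a logarithmic factor in the final rate.

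Pinning the bump amplitudes to $|\theta_j|\le M_*\asymp h^s$ is a pointwise (H\"older, $p=\infty$) constraint: it forces $f_\theta$ into the Besov ball deterministically, regardless of $p$. With that $M_*$ and $K\asymp\ln n$ (forced by the $\chi^2$ budget, as you observe), the separation in the $r$-th moment of the priors is $\asymp M_*^rK^{-r}$, so after taking $r$-th roots the two values of $\|f_\theta\|_r$ are separated by at most $\asymp M_*/K\asymp h^s/\ln n=(n\ln n)^{-s/(2s+1)}(\ln n)^{-1}$, not $(n\ln n)^{-s/(2s+1)}$. Your Step 4's assertion that ``the separation matches the target rate up to constants'' does not go through; the computation gives the Lepski--Nemirovski--Spokoiny-type lower bound that the paper is explicitly trying to beat. (You cannot repair this by re-tuning $h$: the constraints $M_*\lesssim h^s$, $M_*^2/\sigma_h^2\lesssim K$, and $K$ large enough to kill the $N$-fold $\chi^2$ always leave a polylog gap.)

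The paper's proof differs precisely here. It lets $\theta_i$ range over the much larger interval $[-\sqrt{\ln N},\sqrt{\ln N}]$ (so the corresponding bump amplitudes are of order $h^s\ln N$, not $h^s$), and enforces membership in $\cB_{p,\infty}^s(L)$ only with high probability via the constraint $\frac{1}{N}\sum|\theta_i|^p\le(2/\sqrt{\ln N})^p$. This is exactly where finiteness of $p$ is used: one needs moment-matched priors with a large $r$-th-moment gap \emph{and} a small $p$-th moment. Constructing these requires not the Bernstein rate $E_K(|u|^r;[-1,1])\asymp K^{-r}$ on a symmetric interval, but the duality with approximation of $t^{-q+r/2}$ by the span $\{x^{-q+1},\dots,x^K\}$ on $[(\ln N)^{-2},1]$ (Lemmas \ref{lem.measure} and \ref{lem.approx}), followed by a reweighting by $(\ln N)^{-2q}x^{-q}$ that simultaneously pins the $p$-th moment and preserves the $r/2$-th moment gap. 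The priors are then conditioned on the event $E_i$ to lie in $\Theta$, and the generalized two-fuzzy-hypotheses lemma (Lemma \ref{lemma.tsybakov}) is applied. The paper's concluding remark in Section \ref{subsec.measure_construction} notes that for $p=\infty$ the construction collapses to the smaller support $[-1/\sqrt{\ln N},1/\sqrt{\ln N}]$---which is exactly the regime your proposal is stuck in.
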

Theorem \ref{theorem:lr_odd_bias_var} along with Theorem \ref{theorem:lr_odd_lower_bound} provides a complete picture of the minimax rate of estimation of $\|f\|_r $ over $\cB_{p,\infty}^s(L)$ for any non-even $r>1$ and $r\leq p<\infty$. This is a generalization of the result in \cite{lepski1999estimation}. 

\subsubsection{Adaptive Estimation}
It is worth noting that the choice of $h$ in Corollary \ref{corollary:lone_upper_bound} and Theorem \ref{theorem:lr_odd_bias_var} depends explicitly on the smoothness index $s$. Consequently, the resulting rate of estimation by $T_h$ is non-adaptive over different possibilities of smoothness. However, the experienced reader will notice the structure of errors in Theorem \ref{theorem:lone_bias_var} for a general $T_h$ indicates a possible data driven adaptive choice of bandwidth $h$. In particular, a Lepski type argument \cite{lepskii1991problem,lepski1992problems,lepskii1992asymptotically,lepskii1993asymptotically} is standard in such situations and turns out to be sufficient for our purpose when $r=1$. The similar construction for general non-even integer $r>1$ is more subtle due to the unavailability of a transparent error analysis as in Theorem \ref{theorem:lone_bias_var}.  Consequently, we describe the adaptive procedure slightly more generally below--without specifically alluding to the case of $r=1$.

Let $r\ge 1$ be non-even, and $s\in (0,s_{\max}]$ for some given $0<s_{\max}<\infty$ and consider adaptive estimation of $\|f\|_r$ over $\cup_{s\in(0,s_{\max}]}\cB_{p,\infty}^s(L)$ with a known $L$. The knowledge of $s_{\max}$ will be necessary for construction of kernels providing optimal approximations in Nikolskii-Besov spaces. Given access to an upper bound $L_{\max}$, our construction of the adaptive estimator can also adapt to the scaling parameter $L$, by noting that the construction of $T_h$ does not require the knowledge of $L$ (except for the final truncation step, where $L$ can be replaced by $L_{\max}$ without affecting the multiplicative constants in Theorems \ref{theorem:lone_bias_var} and \ref{theorem:lr_odd_bias_var}). However, since the dependence of the minimax risk on $L$ is not the main focus of this paper, we assume that $L$ is known and do not elaborate on the adaptation to $L$. In this framework, a Lepski type construction for an adaptive estimator can be achieved as follows. 
\begin{enumerate}
	\item [(I)] Let \be 
	\mathcal{H}&=[h_{\min},h_{\max}]\cap \{1,\frac{1}{2},\frac{1}{3},\cdots \}, \quad \text{where}\\ h_{\min}&=(n\ln{n})^{-1}, \ h_{\max}=(n\ln{n})^{-\frac{1}{2s_{\max}+1}}. 
	\ee 
	
	\item [(II)]
	Define the data-driven bandwidth
	\be 
	\hat{h} \triangleq\max\left\{h\in \mathcal{H}:(T_h-T_{h'})^2 \leq  C^*\frac{\lambda_{h'}^2}{\ln n},\ \forall h'\in  \mathcal{H}, h'\leq h  \right\}. \label{eq.adaptation}
	\ee 
	
	\item[(III)] The final estimator is $\hat{T} = T_{\hat{h}}$. 
\end{enumerate}
Our next theorem justifies the adaptive nature of $\hat{T}$. 

\begin{theorem}\label{theorem:lr_odd_adaptive}
	Let $r\ge 1$ be non-even and $p\in [r,\infty]$. Choose $M>\lceil s_{\max}\rceil$ and consider the corresponding kernel projection $f_h$ based on $K_M$ defined by \eqref{eqn:kernel_projection}. 
	If $4r\epsilon(2s_{\max}+1)<1$ and the constant $C^*$ is large enough (depending on $(p,r,s_{\max},c_1,c_2,\sigma,L,K_M)$),
	\be 
	\left(\sup_{f\in  \cB_{p,\infty}^s(L)}\E_f\left(\hat{T}-\|f\|_r\right)^2\right)^{\frac{1}{2}}&\leq C (n\ln{n})^{-\frac{s}{2s+1}},
	\ee
	for a constant $C>0$ depending on $s,s_{\max},p,r,L,\sigma,c_1,\varepsilon,C^*$.
\end{theorem}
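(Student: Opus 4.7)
The plan is to follow a Lepski-type adaptive bandwidth argument. Let $h^*(s) \in \mathcal{H}$ denote the largest element of the grid not exceeding $(n \ln n)^{-1/(2s+1)}$; since $\mathcal{H}$ contains reciprocals of all integers in the relevant range and $s \in (0, s_{\max}]$, such $h^*(s)$ exists and satisfies $h^*(s) \asymp (n \ln n)^{-1/(2s+1)}$. Theorem \ref{theorem:lr_odd_bias_var} (together with Theorem \ref{theorem:lone_bias_var} when $r=1$) gives $\E_f(T_{h^*(s)} - \|f\|_r)^2 \lesssim (n \ln n)^{-2s/(2s+1)}$, matching the target squared rate. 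In order to carry the argument through at the level of individual realizations I would first upgrade those $L^2$-bounds to higher-moment or polynomial-tail concentration inequalities of the form $\P(|T_h - \|f\|_r| > C_0(h^s + \lambda_h/\sqrt{\ln n} + n^{-(1-\epsilon)/2})) \leq n^{-\alpha}$ for any desired $\alpha$, by repeating the computations of Theorems \ref{theorem:lone_bias_var} and \ref{theorem:lr_odd_bias_var} at the $2k$-th moment and using Gaussian concentration together with standard moment estimates for Hermite polynomials. A union bound over the polynomially-sized grid $\mathcal{H}$ (with $|\mathcal{H}| \lesssim n \ln n$) then furnishes a single high-probability event $\Omega_*$ on which the deviation bound holds simultaneously for every $h \in \mathcal{H}$. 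The hypothesis $4 r \epsilon (2 s_{\max} + 1) < 1$ is exactly what guarantees that $n^{-(1-\epsilon)/2}$ is absorbed into $(n \ln n)^{-s/(2s+1)}$ uniformly over $s \in (0, s_{\max}]$, even after the final $r$-th-root step.

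On $\Omega_*$ I would split the analysis into two cases. On $\{\hat h \geq h^*(s)\}$, the Lepski condition applied at the pair $(\hat h, h^*)$ yields $(T_{\hat h} - T_{h^*})^2 \leq C^* \lambda_{\hat h}^2/\ln n \leq C^* \lambda_{h^*}^2/\ln n \asymp (n \ln n)^{-2s/(2s+1)}$; combined with $|T_{h^*} - \|f\|_r| \lesssim (n \ln n)^{-s/(2s+1)}$ from Theorem \ref{theorem:lr_odd_bias_var}, this case is handled by the triangle inequality. On the complementary event $\{\hat h < h^*(s)\}$, the truncation in the definition of $T_h$ together with the embedding $\|f\|_r \leq \|f\|_p \leq L$ (valid since $p \geq r$) gives the trivial bound $|\hat T - \|f\|_r| \leq L$. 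It therefore suffices to show $\P(\hat h < h^*(s)) \leq n^{-\alpha''}$ for $\alpha''$ large enough that $L^2 n^{-\alpha''}$ is absorbed into the target squared rate. By the definition of $\hat h$, this event entails the existence of some $h' \in \mathcal{H}$ with $h' \leq h^*(s)$ and $(T_{h^*} - T_{h'})^2 > C^* \lambda_{h^*}^2/\ln n$, so the task reduces to a union bound over such pairs $(h^*, h')$.

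The main obstacle is this last bound. The threshold $\lambda_{h^*}^2/\ln n$ is of the order of the squared optimal rate, while the naive triangle estimate $|T_{h^*} - T_{h'}| \leq |T_{h^*} - \|f\|_r| + |T_{h'} - \|f\|_r|$ only delivers $O(\lambda_{h'}/\sqrt{\ln n})$, which is much too weak when $h' \ll h^*$. The key observation is that, although the MSE bound $\lambda_h^2/\ln n$ in Theorems \ref{theorem:lone_bias_var} and \ref{theorem:lr_odd_bias_var} is driven by the pointwise variance of the polynomial-regime contribution, integration over $[0,1]$ introduces a factor of $h$ from the correlation length of the stochastic field $\xi_h(\cdot)$; consequently the variance of the scalar statistic $T_h$ is of smaller order than $\lambda_h^2/\ln n$, and the variance of $T_{h^*} - T_{h'}$ can be bounded directly rather than via the individual errors. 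Concretely I would revisit the decomposition of $T_h$ into smooth- and non-smooth-regime pieces from Section \ref{section:r_odd}, expand the pointwise difference $T_{h^*}(x) - T_{h'}(x)$ term by term, and compute moment bounds using Hermite-polynomial calculus together with $L^2$-bounds on kernel differences, noting that the bias piece contributes at most $O(h^{*s}) = O(\lambda_{h^*}/\sqrt{\ln n})$. Choosing $C^*$ sufficiently large (depending on $p, r, s_{\max}, c_1, c_2, \sigma, L, K_M$) then makes the Lepski condition hold at $h^*(s)$ with probability at least $1 - n^{-\alpha''}$, completing the argument.
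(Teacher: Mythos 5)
Your overall skeleton matches the paper's proof (define $h^*(s)$, bound the event $\{\hat h\ge h^*(s)\}$ via the stopping rule plus the oracle bound at $h^*(s)$, reduce $\{\hat h< h^*(s)\}$ to a union of comparison tests, and control those tests by high central moments of $\int_0^1 T_h(x)dx$, where the factor $h$ gained from the correlation length of $\xi_h$ --- formalized in the paper via Rosenthal's inequality in Lemma \ref{lemma:lr_odd_phih_moments} --- is indeed the reason the integrated statistic concentrates). But your treatment of the key step, $\bP(\hat h< h^*(s))$, has a genuine gap. You declare the obstacle to be that the threshold is $\sqrt{C^*}\lambda_{h^*}/\sqrt{\ln n}$ while the individual errors at a finer bandwidth $h'$ are of order $\lambda_{h'}/\sqrt{\ln n}$, and you propose to overcome it by bounding the variance of $T_{h^*}-T_{h'}$ directly, ``noting that the bias piece contributes at most $O(h^{*s})$.'' That bias claim is false: for $x$ in the non-smooth regime the estimator uses the best polynomial approximation of $|u|^r$, whose bias is of order $(\lambda_{h'}/\sqrt{\ln n})^r$ pointwise (Lemma \ref{lemma:small_regime_lr}, and by equioscillation this order is attained for worst-case $f$), so on the norm scale $T_{h'}$ carries a deterministic systematic error of order $\lambda_{h'}/\sqrt{\ln n}$, not $O(h^{*s})$. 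For $h'\ll h^*(s)$ this dwarfs your threshold $\sqrt{C^*}\lambda_{h^*}/\sqrt{\ln n}$, and no variance/correlation argument on the difference can remove a bias; under your reading of the rule the comparison test at the pair $(h^*(s),h')$ would actually fail with high probability for some $f$, so the bound $\bP(\hat h<h^*(s))\le n^{-\alpha''}$ you aim for cannot be established this way.

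The paper avoids this because the comparison between $T_{h^*(s)}$ and a finer $T_{h}$ is carried out against the threshold $\gamma_n(h)=\lambda_{h}\sqrt{C^*/\ln n}$, scaled to the \emph{finer} bandwidth (this is how \eqref{eq.adaptation} is used in \eqref{eqn:lr_even_lepski_term_II_prob}, and it is the standard Lepski normalization); the bias, whose constant is fixed by \eqref{eqn:lr_odd_phih_bias_var} and does not grow with $C^*$, is then beaten simply by taking $C^*$ large, while the stochastic part is controlled by Markov with $k$-th central moments of $\int_0^1 T_h(x)dx$, whose extra factor $n^{2k\epsilon}h^{k/2r}$ decays polynomially precisely because $4r\epsilon(2s_{\max}+1)<1$ (this, not absorbing $n^{-(1-\epsilon)/2}$, is the role of that hypothesis), so that taking $k$ large yields the $n^{-3}$ tails needed for the union bound. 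Two further points your plan omits: for $r>1$ one must pass from moments of $\Phi_h=\int_0^1 T_h(x)dx$ to deviations of $T_h=\max\{0,\Phi_h\}^{1/r}$, which forces the paper's case split on $\|f_h\|_r$ small versus large (using $|a^r-b^r|\ge b^{r-1}|a-b|$ and Lemma \ref{lemma:norm}); and the bias bound in the smooth regime requires the Taylor-expansion estimator analysis of Lemma \ref{lemma:large_regime_lr}, not just ``Hermite-polynomial calculus.'' As written, your argument does not close the $\{\hat h<h^*(s)\}$ case.
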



Theorem \ref{theorem:lr_odd_adaptive} shows the existence of an adaptive minimax estimator for $\|f\|_r$ without any penalty on the minimax rate. One of the main challenges in the proof of Theorem \ref{theorem:lr_odd_adaptive} is to demonstrate desired tail bounds of $\{T_h\}_{h \in \mathcal{H}}$, for the candidate estimators in $\{T_h\}_{h \in \mathcal{H}}$ rely on truncated Hermite polynomials of high degrees evaluated at Gaussian random variables.

\subsection{Even $r$}\label{section:r_even} The case of non-adaptive minimax estimation of $\|f\|_r$ for $r$ an even positive integer can be obtained by methods described in \cite{lepski1999estimation}. Although their results were obtained over the H\"{o}lder Balls,
the case of Nikolskii-Besov type spaces that we consider are very similar due to the same nature of approximation error of $f$ by $f_h$. However, for the sake of exposition and completeness, we provide the details here again. 

The crux of the construction on the fact that for $r$ an even positive integer, the function $u\mapsto |u|^r=u^r$ is analytic. Consequently, it is possible to construct unbiased estimator of $\mu^r$ based on samples from $X\sim \cN(\mu,\sigma^2)$. In particular, if $X\sim \cN(\mu,\sigma^2)$ then arguing as Lemma 4.4 of \cite{lepski1999estimation}, $\E\left((X+i\sigma \xi)^r|X\right)$, with $\xi \sim \cN(0,1)$ independent of $X$, is an unbiased estimator of $\mu^r$. As a result, a sequence estimator for $\|f\|_r$ indexed by bandwidth $h$ can now be constructed in the following steps. 

\begin{enumerate}
	\item [(I)] Approximate $f$ by a kernel projection $f_h(x)$ (as in \eqref{eqn:kernel_projection}) and consider estimating $\|f_h\|_r$ instead at a cost of incurring a truncation bias.
	
	\item[(II)] Fix $\xi \sim \cN(0,1)$ independent of $B(t)$. For every $x\in [0,1]$ estimate $f_h(x)^r$ by $T_h(x)\triangleq\E_{\xi}[(\tilde{f}_h(x)+i\lambda\xi)^r]$. 
	
	\item [(III)] Estimate $\|f_h\|^r=\int_{0}^1f_h^r(x)dx$ by $\int_{0}^1 T_h(x) dx$.
	
	\item [(IV)] Finally, an estimator for $\|f\|_r$ is defined as  
	\be
	T_h\triangleq \left(\max\left\{0,\int_{0}^1 T_h(x) dx\right\}\right)^{1/r}.
	\ee
\end{enumerate}

Note that the construction changes from the construction of estimators when $r$ is non-zero only in the definition of $T_h(x)$ and indeed this is due to the ability to produce estimate of analytic functions of Gaussian means. 
The following theorem describes the optimal mean squared error of estimating $\|f\|_r$ by $T_h$ over $\cB_{p,\infty}^s(L)$. 

\begin{theorem}\label{theorem:lr_even_bias_var}
	Let $2\leq r\leq p$ and $r$ be even. Choose $M>\lceil s\rceil$ and consider the corresponding kernel projection $f_h$ based on $K_M$ in \eqref{eqn:kernel_projection}. Letting $h=n^{-1/(2s+1-1/r)}$, 
	\be 
	\left(\sup_{f\in \cB_{p,\infty}^s(L)}\E_f\left(T_h-\|f\|_r\right)^2\right)^{\frac{1}{2}}&\leq Cn^{-\frac{s}{2s+1-1/r}},
	\ee
	for a constant $C>0$ depending on $s,p,r,L,\sigma$.
\end{theorem}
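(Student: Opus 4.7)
The plan is a standard bias/variance split followed by an $r$-th root transfer from $V := \int_0^1 T_h(x)\,dx$ (an unbiased estimator of $W := \|f_h\|_r^r$) to $T_h = (\max\{0,V\})^{1/r}$. First, I would control the approximation bias: since $K_M$ reproduces polynomials of degree up to $M>\lceil s\rceil$, Jackson-type estimates on the $r$-th order modulus of smoothness give $\|f-f_h\|_p \lesssim L h^s$, and because $p\ge r$ on $[0,1]$ this implies $|\|f\|_r-\|f_h\|_r| \le \|f-f_h\|_r \lesssim L h^s$, contributing $L^2 h^{2s}$ to the MSE through the triangle inequality.

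For the variance of $V$, the pointwise estimator $T_h(x)=\bE_\xi[(\tilde f_h(x)+i\lambda_h\xi)^r]$ is recognized as $\lambda_h^r H_r(\tilde f_h(x)/\lambda_h)$, the unique unbiased polynomial estimator of $f_h(x)^r$ in the Gaussian location model $\tilde f_h(x)\sim\cN(f_h(x),\lambda_h^2)$. Expanding $H_r(\mu+Z)=\sum_{k=0}^r\binom{r}{k}H_k(Z)\mu^{r-k}$ and applying the orthogonality $\bE[H_{k_1}(Z_1)H_{k_2}(Z_2)] = \delta_{k_1 k_2}\,k_1!\,\rho^{k_1}$ for correlated standard Gaussians yields
\[
\mathrm{Cov}(T_h(x),T_h(y)) = \sum_{k=1}^{r}\binom{r}{k}^2 k!\,\lambda_h^{2k}\,f_h(x)^{r-k} f_h(y)^{r-k}\,\rho(x,y)^k,
\]
where $\rho(x,y)$ is a function of $(x-y)/h$ supported on $|x-y|\le h$ by the compact support of $K_M$. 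Integrating over $[0,1]^2$ and using Young's inequality (the convolution kernel $\rho^k$ has $L^1$-norm of order $h$) gives
\[
\var(V) \lesssim \sum_{k=1}^{r} h\,\lambda_h^{2k}\,\|f_h\|_{2(r-k)}^{2(r-k)}.
\]
Terms with $2(r-k)>p$ I would bound by combining the embedding $\|f_h\|_\infty \lesssim L h^{-1/p}$ (via H\"older on the definition of $f_h$) with $\|f_h\|_p\lesssim L$, writing $\|f_h\|_{2(r-k)}^{2(r-k)} \le \|f_h\|_\infty^{2(r-k)-r}\|f_h\|_r^r$ to re-express the bound in terms of $W$, while the $k=r$ term contributes the purely noise-driven piece $h\lambda_h^{2r}=h^{1-r}/n^r$.

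To transfer this into control of $\bE(T_h-\|f_h\|_r)^2$ I would use two complementary inequalities for $a\mapsto a^{1/r}$. The elementary fact $(x+y)^r\ge x^r+y^r$ for $x,y\ge 0$ gives the global estimate $|a^{1/r}-b^{1/r}|^r\le |a-b|$ for $a,b\ge 0$; applying this with $a=\max\{0,V\}$ and $b=W$, together with $|\max\{0,V\}-W|\le |V-W|$ (as $W\ge 0$), yields $|T_h-W^{1/r}|^2\le |V-W|^{2/r}$ and hence $\bE(T_h-W^{1/r})^2\le (\var(V))^{1/r}$ by Jensen. The complementary Taylor linearization $|T_h-W^{1/r}|\le |T_h^r-W|/W^{(r-1)/r}$ (valid for $W>0$) gives $\bE(T_h-W^{1/r})^2\le \var(V)/W^{2(r-1)/r}$. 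Taking the pointwise minimum of these two estimates, substituting the $W$-dependent variance bound, and optimizing over $W\in[0,L^r]$, the two bounds balance at $W^2\sim \var(V)$, and this worst-case $W$ is exactly what produces the elbow $-1/r$ in the exponent. The choice $h=n^{-1/(2s+1-1/r)}$ then equates the squared bias $h^{2s}$ with the worst-case variance contribution, producing the claimed rate $n^{-s/(2s+1-1/r)}$ for the root-MSE.

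The main obstacle I anticipate is the bookkeeping of the variance estimate when $2(r-k)>p$, which is unavoidable under the mild assumption $r\le p$ (rather than the safer $p\ge 2(r-1)$). There the na\"ive interpolation picks up negative powers $h^{-1/p}$ that must be absorbed by extracting corresponding positive powers of $W$ via the H\"older interpolation inequality, and one must then verify that the uniform-in-$W$ minimum of the two MSE bounds does not exceed $n^{-2s/(2s+1-1/r)}$. Once this is done, the matching of the $k=r$ noise-only variance term with the bias $h^{2s}$ through the specific bandwidth $h=n^{-1/(2s+1-1/r)}$ pins down the exponent in the statement.
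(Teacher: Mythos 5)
Your overall architecture is sound and is, in essence, the argument of Theorem 2.3 of \cite{lepski1999estimation} to which the paper defers (the paper itself only records the approximation bound $\|f-f_h\|_r\lesssim h^s$ and omits the rest): the identification $T_h(x)=\lambda_h^r H_r(\tilde f_h(x)/\lambda_h)$, the covariance formula via $\bE[H_{k_1}(Z_1)H_{k_2}(Z_2)]=\delta_{k_1k_2}k_1!\rho^{k_1}$ and the localization to $|x-y|\le h$, and the transfer from $V=\int_0^1T_h(x)dx$ to $T_h$ through the two complementary inequalities with elbow at $W^2\asymp\var(V)$ are all correct. The genuine gap is in the single step that re-expresses $\var(V)$ as a function of $W$: the interpolation $\|f_h\|_{2(r-k)}^{2(r-k)}\le\|f_h\|_\infty^{2(r-k)-r}\|f_h\|_r^r$ with $\|f_h\|_\infty\lesssim Lh^{-1/p}$ is too lossy, and with it your own uniform-in-$W$ verification cannot go through.

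Concretely, take $r=p=4$, $s=1$, so $h=n^{-4/11}$ and the target squared risk is $h^{2s}=h^2$. Your bound for the dominant $k=1$ term is $h\lambda_h^2(Lh^{-1/p})^{r-2}W\asymp n^{-1}h^{-1/2}W$. At $W\asymp h^{sr}=h^4$ (well inside $[0,L^r]$, and realized e.g.\ by a single bump of width $h$ and height $\asymp h^{s-1/p}$), this certifies only $\var(V)\lesssim h^{25/4}$, so your bound (a) gives $\var(V)^{1/r}\asymp h^{25/16}$ and your bound (b) gives $\var(V)W^{-2(r-1)/r}\asymp h^{1/4}$; their minimum, $h^{25/16}$, exceeds the required $h^{2}$ by a factor $n^{7/44}\to\infty$. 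The theorem is not at fault: for that bump $\|f_h\|_6^6\asymp h^{11/2}$, i.e.\ your inequality overestimates it by $h^{-2}$, because the envelope $Lh^{-1/p}$ does not see that the signal is small. The repair is the Nikolskii/Young bound $\|f_h\|_\infty\le\|h^{-1}K_M(\cdot/h)\|_{r/(r-1)}\,\|f\|_r\lesssim h^{-1/r}\|f\|_r$ combined with $\|f\|_r\le\|f_h\|_r+C_0h^s$, which turns the $k\le r/2$ terms into $\lesssim h\lambda_h^{2k}\,h^{-(r-2k)/r}(W^{1/r}\vee h^s)^{r-2k}\,W$; with this substitution one checks $\var(V)\lesssim h^{2s}(W^{1/r}\vee h^s)^{2(r-1)}$ uniformly in $W$ at $h=n^{-1/(2s+1-1/r)}$ (the $k=r$ term being the exact balance with $h^{2s}$), and your min-of-two-bounds argument then closes. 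This missing ingredient is precisely Lemma \ref{lemma:norm} of the paper, i.e.\ Lemma 4.4 of \cite{lepski1999estimation}, which your sketch replaced by the weaker $Lh^{-1/p}$ envelope.
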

Recall from Section \ref{section:function_spaces_approx} that the approximation error $\|f-f_h\|_r$ is always bounded by $C(L,K_M)h^s$ for any $f\in \cB_{p,\infty}^s(L)$. Hence, the proof of Theorem \ref{theorem:lr_even_bias_var} can be obtained verbatim from the proof of Theorem 2.3 of \cite{lepski1999estimation} and is hence omitted. In fact, proof of the lower bound in Theorem 2.3 of \cite{lepski1999estimation} implies that the proposed $T_h$ with $h=n^{-1/(2s+1-1/r)}$ is in fact asymptotically minimax rate optimal over any $\cB_{p,\infty}^{s}(L)$ as well. Therefore, as before, it remains to explore adaptive minimax estimation over a collection of smoothness classes. In this regard, once can argue along the lines of standard results from \cite{ingster1987minimax,ingster2012nonparametric}, that unlike non-even $r$, adaptation over a range of smoothness indices is not possible without paying a poly-logarithmic penalty. In particular,  consider testing $H_0: f\equiv 0$ vs $H_1: f\in \cB_{r,\infty}^s(L), \|f\|_r\geq \rho_n$ with $s$ varying over a range of values $[s_{\min},s_{\max}]\subseteq (0,\infty)$. Whereas the minimax testing rate of separation for $\rho_n$ with known $s$ equals $n^{-s/(2s+1-1/r)}$ (See \cite{ingster2012nonparametric} and proof of \cite[Theorem 3.4 (b)]{carpentier2013honest} for details), i.e., the minimax rate of estimation of $\|f\|_r$, the adaptation over $[s_{\min},s_{\max}]$ needs an additional penalty for $\rho_n$ which equals $\left(\ln\ln{n}\right)^{C(r,s)}$ for a constant $C(r,s)>0$ depending on $r$ and $s$. The proof of this additional poly-logarithmic penalty is proved for $r=2$ in \cite{spokoiny98adaptive} (see also proof of \cite[Theorem 8.1.1]{gine2015mathematical}). The proof of this additional penalty needed for adaptive hypothesis testing builds on the usual second moment method type lower bound argument for non-adaptive testing and involves putting an additional uniform prior on a suitably discretized subset $[s_{\min},s_{\max}]$. Using similar ideas, the proof of a  required penalty for adaptation for general even $r$-can be obtained by combining proof technique of \cite[Theorem 3.4 (b)]{carpentier2013honest} for non-adaptive testing and adaptive lower bound arguments as in \cite{spokoiny98adaptive,gine2015mathematical}.  The details are omitted and we simply comment on the implications of the result. Indeed, such a poly-logarithmic penalty needed for adaptive hypothesis testing in $L_r$-norm, yields a penalty for adaptive estimation of $\|f\|_r$ over  $\cB_{r,\infty}^s(L)$. We believe that this poly-logarithmic penalty is not sharp for adaptive estimation of $\|f\|_r$ norms for even $r$ and only serves to demonstrates the lack of adaptation without paying a price. In particular, we have shown elsewhere that a careful application of Lepski's method along with an involved computation of all central moments of $T_h$ can yield a penalty which behaves $(\ln{n})^{C'(r,s)}$ for a constant $C'(r,s)>0$ depending on $r$ and $s$. In future work, we plan to explore the exact nature of this poly-logarithmic penalty necessary for adaptation. Finally,  careful readers will notice that, although the poly-logarithmic penalty on adaptive hypothesis testing is ubiquitous for any $r$, the result is interesting to us only for even values of $r$, since otherwise the minimax rate of estimation of $\|f\|_r$ is strictly slower than minimax rate of testing in $L_r$-norm. 

\section{Discussion and Open Problems}\label{section:discussions} In this paper we complement the results in \cite{lepski1999estimation} to provide a complete picture of asymptotically minimax estimation of $L_r$-norm of the mean in a Gaussian White Noise model. Unlike \cite{lepski1999estimation}, our results are rate optimal from both perspectives of upper and lower bounds. In this effort, best polynomial approximation of non-smooth functions plays a major role and might be of independent interest. 

Several interesting questions remain open as challenging future directions. In particular, closing the lower bound gap over H\"{o}lder spaces, the actual premise considered by \cite{lepski1999estimation}, is definitely a question of interest. Understanding the sharp nature of the penalty required for adaptive estimation of $\|f\|_r$ when $r$ is even is another question that remain unanswered. We plan to study these issues in detail in future work.  
\section{Proof of Main Results}\label{section:main_proofs}
\subsection{Proof of Theorem \ref{theorem:lone_bias_var}}
The proof of the Theorem hinges on the following lemma, the derivation of which can be found in Section \ref{section:technical_lemmas}. To state the lemma, consider 
\be
\xi(X,Y) \triangleq \tilde{P}(X)\mathbbm{1}(|Y|< c_1\lambda_h\sqrt{\ln n}) + |X|\mathbbm{1}(|Y|\ge c_1\lambda_h\sqrt{\ln n})
\ee
used for estimating $\mu$ with independent $X,Y\sim\cN(\mu,\lambda_h^2)$ and $\lambda_h$ defined by \eqref{def:lambda_h}. 

\begin{lemma}\label{lemma:lone_xi_bias_var}
	Under the assumptions of Theorem \ref{theorem:lone_bias_var}, for any fixed $\mu$, $k\geq 2$, and $c_1>\sqrt{8k}$, we have that there exists constants $C_1$ (depending on $(c_1,c_2,\epsilon,K_M)$) and $C_2$  (depending on $(c_1,c_2,\epsilon,K_M,k)$) such that
	\be
	|\bE\xi(X,Y)-|\mu||&\leq \frac{C_1}{\sqrt{nh\ln n}}, \\
	\E|\xi(X,Y)-\E(\xi(X,Y))|^k&\leq C_2 \left(\lambda_h n^{2\epsilon}\right)^k.
	\ee
\end{lemma}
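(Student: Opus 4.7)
The plan is to prove the bias and the central-moment estimates separately, in each case splitting the analysis by whether $|\mu|$ lies in a ``large'' regime $|\mu|\ge 2c_1\lambda_h\sqrt{\ln n}$ or a ``small'' regime $|\mu|<2c_1\lambda_h\sqrt{\ln n}$, reflecting the fact that the two summands defining $\xi(X,Y)$ play reversed roles across this threshold.

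\textbf{Bias.} I would start from the decomposition
\be
\E\xi(X,Y)-|\mu| = \E[(\tilde{P}(X)-|\mu|)\ind(|Y|<c_1\lambda_h\sqrt{\ln n})] + \E[(|X|-|\mu|)\ind(|Y|\ge c_1\lambda_h\sqrt{\ln n})],
\ee
and analyze each summand. In the large regime, Gaussian tail bounds give $\P(|Y|<c_1\lambda_h\sqrt{\ln n})\le n^{-c_1^2/2}$; combined with the deterministic bound $|\tilde{P}(X)|\le n^{2\epsilon}\lambda_h$ and the hypothesis $c_1^2>8k\ge 16$, the first summand is super-polynomially small in $n$, while the second reduces to the Gaussian quantity $\E|X|-|\mu|$, which is of order $\lambda_h\exp(-\mu^2/(2\lambda_h^2))$. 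In the small regime, the Hermite unbiasedness identity $\E[\lambda_h^k H_k(X/\lambda_h)]=\mu^k$ from Lemma \ref{lemma:hermite} gives $\E P(X) = (2c_1\lambda_h\sqrt{\ln n})\cdot Q_K(t)$ with $t=\mu/(2c_1\lambda_h\sqrt{\ln n})\in[-1,1]$ and $Q_K$ the best degree-$K$ polynomial approximation of $u\mapsto|u|$ on $[-1,1]$. Bernstein's classical estimate $\sup_{t\in[-1,1]}|Q_K(t)-|t||\lesssim K^{-1}$ together with $K=\lceil c_2\ln n\rceil$ then yields $|\E P(X)-|\mu||\lesssim \lambda_h/\sqrt{\ln n}\asymp (nh\ln n)^{-1/2}$, matching the target rate.

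\textbf{Truncation and moments.} A separate sub-step controls the clipping error $\E\tilde{P}(X)-\E P(X)$. Using the Chebyshev-based bound $|g_{K,k}^{(1)}|\le 2^K=O(n^{c_2\ln 2})$ and the classical growth estimate $|H_k(x)|\lesssim \sqrt{k!}\,2^{k/2}\exp(x^2/4)$ for Hermite polynomials, I would show that the moments of $P(X)$ grow at most polynomially in $n$; the constraint $7c_2\ln 2<\epsilon$ then forces the truncation threshold $n^{2\epsilon}\lambda_h$ to dominate typical values of $|P(X)|$ by a power-of-$n$ margin, so Cauchy--Schwarz gives $\E|(P(X)-\tilde{P}(X))|\le n^{-c}$ for some $c>0$. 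For the $k$-th central moment I would use the crude envelope $|\xi(X,Y)|\le n^{2\epsilon}\lambda_h+|X|$. In the small regime all terms are $O((\lambda_h n^{2\epsilon})^k)$ by direct computation from Gaussian moments of $X$ together with the truncation bound. In the large regime, the event $\{|Y|<c_1\lambda_h\sqrt{\ln n}\}$ has probability at most $n^{-c_1^2/2}$, on whose complement $\xi=|X|$ has central $k$-th moment $O(\lambda_h^k)$; the rare-event contribution from $\tilde{P}(X)$ and from the cross term $\E V\cdot (A-\E A)$ (with $V=\tilde P(X)-|X|$, $A$ the indicator) is absorbed by Cauchy--Schwarz, and it is precisely here that $c_1>\sqrt{8k}$ is invoked to dominate the polynomially-large factor $|\mu|^k$ by $n^{c_1^2/2}$.

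\textbf{Main obstacle.} The delicate step is the tail and moment control of $P(X)$ supporting the truncation argument. The coefficients $|g_{K,k}^{(1)}|$ of the best polynomial approximation can grow like $2^K=n^{c_2\ln 2}$, Hermite polynomials contribute combinatorial factors $\sqrt{k!}$, and integrating the Gaussian-exponential $\exp(X^2/(4\lambda_h^2))$ against $X\sim\cN(\mu,\lambda_h^2)$ contributes another polynomial-in-$n$ factor. Tracking these contributions uniformly in $\mu$ and verifying that the specific inequalities imposed on $(c_1,c_2,\epsilon)$ in Theorem \ref{theorem:lone_bias_var} (notably $c_1^2>8k$, $4c_1^2\ge c_2$, and $7c_2\ln 2<\epsilon$) together force the cumulative blow-up to stay below $n^{2\epsilon}$ with enough margin to make the truncation and rare-event contributions negligible is the heart of the bookkeeping, and is what pins down these otherwise mysterious constraints.
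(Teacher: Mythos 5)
Your outline is structurally close to the paper's, which also first establishes a bias/moment estimate for the polynomial part $P(X)$ on the small regime (Lemma \ref{lemma:small_regime}) and for the plug-in $|X|$ on the large regime (Lemma \ref{lemma:large_regime}), then glues them via the decomposition Lemma \ref{lemma:cailow1}. Your key ingredients — the Bernstein-type approximation rate $\beta_1/K$, the Chebyshev coefficient bound $|g^{(1)}_{K,k}|\le 2^{3K}$, Gaussian tails, independence of $X$ and $Y$, and the bookkeeping against $n^{2\epsilon}$ via $7c_2\ln 2<\epsilon$ — are exactly the ones the paper invokes.

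There is, however, a genuine gap in your two-regime split. The paper works with three cases, split at $\frac{c_1}{2}\lambda_h\sqrt{\ln n}$ and at $2c_1\lambda_h\sqrt{\ln n}$, and this is not cosmetic. In your ``small'' regime $|\mu|<2c_1\lambda_h\sqrt{\ln n}$, you bound only the first summand $\E[(\tilde P(X)-|\mu|)\ind(|Y|<c_1\lambda_h\sqrt{\ln n})]$ and leave the second summand $\E[(|X|-|\mu|)\ind(|Y|\ge c_1\lambda_h\sqrt{\ln n})]$ untreated. By the independence of $X$ and $Y$ this factors as $(\E|X|-|\mu|)\cdot\P(|Y|\ge c_1\lambda_h\sqrt{\ln n})$, but neither factor is uniformly small over your small regime: when $|\mu|\approx 0$ the bias $\E|X|-|\mu|$ is of order $\lambda_h$, while when $|\mu|\approx 1.5\,c_1\lambda_h\sqrt{\ln n}$ the probability is bounded away from $0$. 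The resolution requires the further split used in the paper: for $|\mu|\le \frac{c_1}{2}\lambda_h\sqrt{\ln n}$ the indicator probability is $\lesssim n^{-c_1^2/8}$ (and one uses $\E|X|\le |\mu|+\lambda_h$); for $\frac{c_1}{2}\lambda_h\sqrt{\ln n}<|\mu|<2c_1\lambda_h\sqrt{\ln n}$ the indicator probability is moderate but, by Lemma \ref{lemma:large_regime}, $\E|X|-|\mu|\lesssim \lambda_h n^{-c_1^2/8}/\sqrt{\ln n}$. The same issue recurs in your moment argument, where you need $\var(|X|)\lesssim\lambda_h^2$ and control of $|\E\tilde P(X)-\E|X||$ in this intermediate band. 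Your two-regime version simply cannot see this crossover, and would leave the lemma unproved for $|\mu|$ of order $\lambda_h\sqrt{\ln n}$. One small imprecision in the truncation step: the paper obtains $\E|P(X)-\tilde P(X)|\le \E|P(X)|^2/(n^{2\epsilon}\lambda_h)$ by Markov/Chebyshev rather than Cauchy--Schwarz, but either route works given the second-moment bound $\E P(X)^2 = O(n^{7c_2\ln 2}\lambda_h^2(\ln n)^3)$.
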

Note that Lemma \ref{lemma:lone_xi_bias_var} yields bounds on all even central moments of $\xi(X,Y)$, a result that will be helpful in subsequent proof of adaptation. For now, we will only use the result for $k=2$ which corresponds to bound on the variance of $\xi(X,Y)$.

Coming back to the proof of Theorem \ref{theorem:lone_bias_var}, note that there are three types of errors: 
\begin{enumerate}
	\item Approximation error incurred by replacing $\|f\|_1$ with $\|f_h\|_1$; 
	\item The bias of $\int_0^1 T_h(x)dx$ in estimating $\|f_h\|_1$; 
	\item The variance of $\int_0^1 T_h(x)dx$ in estimating $\|f_h\|_1$.
\end{enumerate}

We bound these errors separately. For the approximation error, by an alternative characterization of Besov balls \cite{hardle2012wavelets}, for $f\in \cB_{p,\infty}^s(L)$ we have that for a constant $C_0$ depending on $L$ and $K_M$, 
\be
\|f-f_h\|_p \leq C_0 h^s, \qquad \forall h>0.
\ee
As a result, the approximation error is upper bounded by
\be
|\|f\|_1 - \|f_h\|_1| \le \|f_h-f\|_1 \le \|f_h-f\|_p \le C_0h^s.
\ee

Secondly we upper bound the bias. By Lemma \ref{lemma:lone_xi_bias_var}, there exists a constant $C_1$ (depending on $(c_1,c_2,\epsilon,K_M)$) such that
\be
\left| \bE \int_0^1 T_h(x)dx - \|f_h\|_1\right| &\le \int_0^1 | \bE T_h(x) - f_h(x)|dx \\
&\leq   \int_0^1 \frac{C_1}{\sqrt{nh\ln n}}dx = \frac{C_1}{\sqrt{nh\ln n}}.
\ee

Finally we upper bound the variance of $T_h$. Note that $T_h(x)$ and $T_h(y)$ are independent as long as $|x-y|>h$. Therefore by Lemma \ref{lemma:lone_xi_bias_var}, there exists a constant $C_2$ (depending on $(c_1,c_2,\epsilon,K_M)$) such that
\be
\var\left(\int_0^1 T_h(x)dx\right)
&= \int_0^1\int_0^1 \mathsf{Cov}(T_h(x),T_h(y)) dxdy\\
&= \iint_{|x-y|\le h} \mathsf{Cov}(T_h(x),T_h(y)) dxdy\\
&\le \iint_{|x-y|\le h} \frac{\var(T_h(x))+\var(T_h(y))}{2} dxdy\\
&=2 h\int_0^1 \var(T_h(x))dx\\
&\leq C_2h\int_0^1 \frac{1}{n^{1-\epsilon}h}dx\\
&= \frac{C_2}{n^{1-\epsilon}}.
\ee

Note that $0\le \|f\|_1\le L$ for any $f\in \cB_{p,\infty}^s(L)$, and consequently
\begin{align*}
\left| T_h - \|f\|_1 \right| \le \left| \int_0^1 T_h(x)dx - \|f\|_1 \right|. 
\end{align*}
In summary, for any $f\in \cB_{p,\infty}^s(L)$, by triangle inequality we have
\be
\left(\bE\left(T_h-\|f\|_1\right)^2\right)^{\frac{1}{2}} &\le \left(\bE\left(\int_0^1 T_h(x)dx-\|f\|_1\right)^2\right)^{\frac{1}{2}}\\
&\leq \sqrt{3}\left(|\|f\|_1-\|f_h\|_1| + \left|\bE \int_0^1 T_h(x)dx-\|f_h\|_1\right| + \sqrt{\var\left(\int_0^1 T_h(x)dx\right)}\right) \\
&\leq C^*\left( h^s+ \frac{1}{\sqrt{nh\ln n}} + \frac{1}{n^{(1-\epsilon)/2}}\right),
\ee
where $C^*$ is a constant depending on $(c_1,c_2,\sigma, \epsilon,L,K_M)$ which in turn satisfies the conditions of Theorem \ref{theorem:lone_bias_var}. This completes the proof of the theorem. \qed

\subsection{Proof of Theorem \ref{theorem:lr_odd_bias_var}}
The proof of the Theorem hinges on the following lemma.

\begin{lemma}\label{lemma:overall_lr_odd}
	Under the conditions of Theorem \ref{theorem:lr_odd_bias_var}, the following hold for all $x\in [0,1]$, $k\geq 2$, $c_1>\sqrt{8k}$, and constants $C_1$ (depending on $(c_1,c_2,\epsilon,\sigma,K_M)$) and $C_2$  (depending on $(c_1,c_2,\epsilon,\sigma,K_M,k)$).
	\be
	|\bE T_h(x) - |f_h(x)|^r| &\leq C_1 (nh\ln n)^{-\frac{r}{2}},\\
	\bE|T_h(x)-\bE T_h(x)|^k &\leq C_2 \frac{n^{2k\epsilon}}{(nh)^{\frac{k}{2}}}\left(|f_h(x)|^{(r-1)k}+(nh)^{-\frac{(r-1)k}{2}}\right).
	\ee
\end{lemma}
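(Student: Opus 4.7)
The plan is to carry out a three-regime analysis in $\mu := f_h(x)$, mirroring the proof of Lemma \ref{lemma:lone_xi_bias_var} but adapted to the more elaborate estimator used for general non-even $r>1$. Divide the range of $\mu$ into a \emph{non-smooth regime} $|\mu| \le c_1\lambda_h\sqrt{\ln n}/2$, a \emph{smooth regime} $|\mu| \ge 2c_1\lambda_h\sqrt{\ln n}$, and a thin \emph{transition regime} in between. Since $\tilde f_{h,3}(x) \sim \mathcal N(\mu,\lambda_h^2)$ is independent of $\tilde f_{h,1}$ and $\tilde f_{h,2}$, Gaussian concentration guarantees that, outside an event of probability at most $n^{-c}$ for any prescribed $c$ (using $c_1^2 \ge 16$), exactly one of the three indicators in the definition of $T_h(x)$ is active. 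On the complementary event all three summands are bounded deterministically by $\lambda_h^r n^{2\epsilon}$, so their contribution to the bias and to every central moment is negligible once $\epsilon$ is chosen small.

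For the bias, in the non-smooth regime the dominant summand is $\tilde P_r(\tilde f_{h,1})$. By the unbiasedness of Hermite polynomials (Lemma \ref{lemma:hermite}), the untruncated $P_r(\tilde f_{h,1})$ has expectation $(2c_1\lambda_h\sqrt{\ln n})^r\, Q_K(\mu/(2c_1\lambda_h\sqrt{\ln n}))$, where $Q_K$ is the best degree-$K$ polynomial approximation of $u\mapsto |u|^r$ on $[-1,1]$. The Jackson-type bound $\|Q_K - |u|^r\|_{L_\infty[-1,1]} \lesssim K^{-r}$ for non-even $r$, together with $K = \lceil c_2\ln n\rceil$, produces a bias of order $(\lambda_h\sqrt{\ln n})^r (\ln n)^{-r} = (nh\ln n)^{-r/2}$, matching the target. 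The truncation at $\pm\lambda_h^r n^{2\epsilon}$ incurs only an exponentially small contribution thanks to the hypothesis $7c_2\ln 2 < \epsilon$, which guarantees that $n^{2\epsilon}$ dominates every moment of $P_r(\tilde f_{h,1})$ up to sub-exponential errors. In the smooth regime, Lemma \ref{lemma:large_regime_lr} applied to $S_{\lambda_h}(\pm \tilde f_{h,1}, \pm \tilde f_{h,2})$ with $R = \lfloor 2r \rfloor$ delivers a Taylor residual of order $(nh\ln n)^{-r/2}$, because its $(R+1)$-st term is of order $\lambda_h^{R+1}\mu^{r-R-1} \lesssim \lambda_h^r (\ln n)^{-(R+1-r)/2} \le \lambda_h^r(\ln n)^{-r/2}$ in this regime. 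In the transition regime $|\mu|^r \asymp (\lambda_h\sqrt{\ln n})^r$, so either summand yields an error of the correct order.

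For the $k$-th central moment, apply Minkowski's inequality in $L^k$ to separate the three indicator summands. The truncated polynomial piece obeys $|\tilde P_r(\cdot)| \le \lambda_h^r n^{2\epsilon}$ deterministically, so its centred $k$-th moment is at most $(2\lambda_h^r n^{2\epsilon})^k = n^{2k\epsilon}(nh)^{-rk/2}$, matching the second term of the claimed bound (which equals $n^{2k\epsilon}(nh)^{-k/2}\cdot (nh)^{-(r-1)k/2}$). The two $S_{\lambda_h}$-summands are finite linear combinations of products $\tilde f_{h,1}^{r-j}\cdot \lambda_h^j H_j(\tilde f_{h,2}/\lambda_h)$ restricted to $\{\tilde f_{h,1} \ge c_1\lambda_h\sqrt{\ln n}/4\}$. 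Standard Gaussian moment bounds for Hermite polynomials give $\bE |H_j(\tilde f_{h,2}/\lambda_h)|^k \lesssim 1 + (\mu/\lambda_h)^{jk}$; combining this with a centred $k$-th moment bound for $\tilde f_{h,1}^{r-j}$ on the indicator event of order $\lambda_h^k (|\mu|^{(r-j-1)k} + (\lambda_h\sqrt{\ln n})^{(r-j-1)k})$, and summing over $j = 0,\dots,R$, yields a total centred $k$-th moment of order $\lambda_h^k(|\mu|^{(r-1)k} + (\lambda_h\sqrt{\ln n})^{(r-1)k})$, which is the first term of the claimed bound after absorbing the $(\ln n)^{(r-1)k/2}$ factor into $n^{2k\epsilon}$.

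The main obstacle I anticipate is the bookkeeping for the two-variable estimator $S_{\lambda_h}(u,v)$. One must simultaneously (i) keep the powers $u^{r-j}$ of non-integer exponent under control through the cutoff $u \ge c_1\lambda_h\sqrt{\ln n}/4$ (which removes the singularity but forces careful analysis near the boundary), (ii) track how the Hermite moments in $v$ interact with the powers of $u$ and the shift $\mu/\lambda_h$ without producing spurious blow-ups, and (iii) verify that $R = \lfloor 2r \rfloor$ is large enough to reduce the Taylor residual to $(nh\ln n)^{-r/2}$ throughout the smooth regime. The transition regime is especially delicate because neither the polynomial-approximation summand nor the Taylor-expansion summand is individually small there, and cleanly matching the two orders of approximation requires combining Lemma \ref{lemma:large_regime_lr} with the Jackson bound and the Gaussian tail control on $\tilde f_{h,3}$ jointly.
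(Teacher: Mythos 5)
Your proposal follows essentially the same route as the paper: a regime split in $\mu=f_h(x)$, with the non-smooth regime handled by Hermite unbiasedness plus the Jackson-type bound $K^{-r}$ for the best polynomial approximation of $|u|^r$ (this is exactly the content of Lemma \ref{lemma:small_regime_lr}), the smooth regime handled by Lemma \ref{lemma:large_regime_lr} for $S_{\lambda_h}$, and the three branches of $T_h(x)$ glued together using the independence of $\tilde f_{h,3}$ and Gaussian tails, just as the paper does by repeating the Case I--III argument of Lemma \ref{lemma:lone_xi_bias_var} via Lemma \ref{lemma:cailow1}.

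One step as written is wrong, though easily repaired: you assert that on the low-probability ``mismatch'' event all three summands are bounded deterministically by $\lambda_h^r n^{2\epsilon}$. This is true only for the truncated piece $\tilde P_r(\tilde f_{h,1})$; the terms $S_{\lambda_h}(\pm\tilde f_{h,1},\pm\tilde f_{h,2})$ are \emph{not} truncated and are unbounded (they contain powers of $\tilde f_{h,1}$ and Hermite polynomials of $\tilde f_{h,2}$). The correct handling, as in the paper's treatment of the analogous cross terms, is to pay only a factor $\bP(|\tilde f_{h,3}-\mu|\ge c_1\lambda_h\sqrt{\ln n}/2)\le n^{-c_1^2/8}$ (or its $k$-th-moment analogue from Lemma \ref{lemma:cailow1}) against polynomially growing moment bounds for the $S_{\lambda_h}$ terms, which is precisely where the hypothesis $c_1>\sqrt{8k}$ is used; with that substitution your argument matches the paper's.
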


We postpone the proof of the lemma to Section \ref{section:technical_lemmas} and complete the proof of Theorem \ref{theorem:lr_odd_bias_var} assuming its validity. 

As is in the case of $L_1$ norm estimation, there are three types of error incurred by our estimator $T_h$, i.e., the approximation error, the bias and the variance. We analyze these errors separately.

For the approximation error, by the property of Besov spaces \cite{hardle2012wavelets} we know that there exists a constant $C_0$ depending on $L$ and $K_M$ such that for all $f\in \cB_{p,\infty}^s(L)$ the kernel approximation error satisfies $\|f-f_h\|_p\leq C_0 h^s$. Hence, by the monotonicity of $L_p$ norms on $[0,1]$ and $r\le p$, we know that
\be
|\|f\|_r - \|f_h\|_r| \le \|f-f_h\|_r \le \|f-f_h\|_p \le C_0h^s.\label{eqn:besov_approx}
\ee
where $C_0>0$ is some universal constant which only depends on radius $L$ and the kernel $K_M$.

For the bias and the variance, we look at the bias and variance of $\Phi_h=\int_{0}^1 T_h(x)dx$,
which is the estimator for $\|f_h\|_r^r$. Similar to the proof of Theorem \ref{theorem:lone_bias_var}, one can show  using Lemma \ref{lemma:overall_lr_odd} that for $C_1$ (depending on $(c_1,c_2,\epsilon,\sigma,K_M)$) and $C_2$  (depending on $(c_1,c_2,\epsilon,\sigma, K_M)$)
\be
|\bE \Phi_h - \|f_h\|_r^r| &\leq C_1 (nh\ln n)^{-\frac{r}{2}},\\
\var(\Phi_h) &\leq  h\cdot \int_0^1\frac{C_2}{n^{1-\epsilon}h}\left(|f_h(x)|^{2r-2}+\frac{1}{(nh)^{r-1}}\right)dx\\
&= \frac{C_2}{n^{1-\epsilon}}\left(\|f_h\|_{2r-2}^{2r-2}+\frac{1}{(nh)^{r-1}}\right).\label{eqn:lr_odd_phih_bias_var}
\ee

For the estimation performance of our final estimator $T_h$, first note that $\|f\|_p\le L$ implies
\begin{align*}
|T_h - \|f\|_r| \le |\max\{0, \Phi_h\}^{\frac{1}{r}} - \|f\|_r|. 
\end{align*}
Set $h=(n\ln n)^{-\frac{1}{2s+1}}$, and $\epsilon>0$ sufficiently small satisfying the conditions of Theorem \ref{theorem:lr_odd_bias_var}. We then divide our analysis into two cases.

First suppose that $\|f\|_r\le 2C_0h^s$, with $C_0$ defined in \eqref{eqn:besov_approx}.  Then $\|f_h\|_r\le \|f\|_r + \|f-f_h\|_r\le 3C_0h^s$, and by the bias bound of $\Phi_h$ in \eqref{eqn:lr_odd_phih_bias_var}, $|\bE \Phi_h|\leq C_3h^{sr}$ for a constant $C_3$ depending on $(C_0,C_1,r,s)$. Hence, by Lemma \ref{lemma:norm}, 
\be
\ & \left(\bE (T_h-\|f\|_r)^2\right)^\frac{1}{2}\\
&\leq \left(\bE (\max\{0, \Phi_h\}^{\frac{1}{r}}-\|f\|_r)^2\right)^\frac{1}{2} \\
&\leq 2\left( \sqrt{\bE [\max\{0, \Phi_h\}^{\frac{2}{r}}]} + \|f\|_r\right)\\
&\leq 2(\bE \Phi_h^2)^{\frac{1}{2r}} + 2C_0h^s\\
&\leq 2|\bE\Phi_h|^{\frac{1}{r}} + 2(\var(\Phi_h))^{\frac{1}{2r}} + 2C_0h^s\\
&\leq 2(C_3^{\frac{1}{r}}+C_0)h^{s}+
2\left[\frac{C_2}{n^{1-\epsilon}}\left(\|f_h\|_{2r-2}^{2r-2}+\frac{1}{(nh)^{r-1}}\right)\right]^{\frac{1}{2r}}\quad \text{(by \eqref{eqn:lr_odd_phih_bias_var})}
\\
&\leq 2(C_3^{\frac{1}{r}}+C_0)h^{s}+
2\left[\frac{C_2}{n^{1-\epsilon}}\left(\begin{array}{c}C(r,K_M)h^{-1+1/r}\|f\|_r^{r-1}\|f_h\|_r^{r-1}\\+\frac{1}{(nh)^{r-1}}\end{array}\right)\right]^{\frac{1}{2r}} \quad \text{(by Lemma \ref{lemma:norm})}\\
&\leq C_4\left[h^s + \left(\frac{h^{-1+1/r}\cdot h^{2s(r-1)}}{n^{1-\epsilon}}\right)^{\frac{1}{2r}} + \left(\frac{1}{n^{1-\epsilon}(nh)^{r-1}}\right)^{\frac{1}{2r}}\right]\leq 3C_4 h^s,\label{eqn:lr_odd_fmall}
\ee
where $C_4$ is a constant depending on $C_0,C_1,C_2,C_3, r,K_M,s$.

Second suppose that $\|f\|_r>2C_0h^s$, then $\|f_h\|_r\ge \|f\|_r - \|f-f_h\|_r>C_0h^s$. Using $|a^r-b^r|\ge b^{r-1}|a-b|$ for any $a,b\ge 0$ and $r\ge 1$, we have
\be
|T_h-\|f\|_r| &\le |T_h-\|f_h\|_r| + C_0h^s\\
&\le \frac{|T_h^r-\|f_h\|_r^r|}{\|f_h\|_r^{r-1}} + C_0h^s \\
&\le \frac{|\Phi_h-\|f_h\|_r^r|}{(C_0h^s)^{r-1}} + C_0h^s = C_0^{1-r}h^{(1-r)s}|\Phi_h-\|f_h\|_r^r| +C_0 h^s.
\ee
As a result, by triangle inequality we have
\be
\left(\bE (T_h-\|f\|_r)^2\right)^\frac{1}{2} &\leq C_0^{1-r}h^{(1-r)s}\left(\bE (\Phi_h-\|f\|_r^r)^2\right)^\frac{1}{2} + C_0h^s\\
&\leq C_0^{1-r}h^{(1-r)s}|\bE \Phi_h-\|f_h\|_r^r| + C_0^{1-r}h^{(1-r)s}\sqrt{\var(\Phi_h)} + C_0h^s \\
&\leq C_4\left[\begin{array}{c}h^{(1-r)s}\cdot h^{sr} + h^{(1-r)s}\sqrt{n^{\epsilon-1}h^{-1+1/r}h^{2s(r-1)}}\\
	+h^{(1-r)s}\sqrt{n^{\epsilon-1}(nh)^{-(r-1)}} + h^s\end{array}\right]\leq C_5 h^s,\\\label{eqn:lr_odd_flarge}
\ee
where the second inequality in the above display follows similar to before by equation \eqref{eqn:lr_odd_phih_bias_var} and Lemma \ref{lemma:norm} with $C_4$ being a constant depending on $C_0,C_1,C_2,C_3, r,K_M,s$. Combining inequalities \eqref{eqn:lr_odd_fmall} and \eqref{eqn:lr_odd_flarge} completes the proof of Theorem \ref{theorem:lr_odd_bias_var}. \qed

\subsection{Proof of Theorem \ref{theorem:lone_lower_bound} and Theorem \ref{theorem:lr_odd_lower_bound}}
The outline for the proof of lower bounds (i.e., Theorems \ref{theorem:lone_lower_bound} and \ref{theorem:lr_odd_lower_bound}) is as follows. In Section \ref{subsec.parametric_reduction}, we reduce the nonparametric problem to a parametric subproblem in the Gaussian location model. The minimax lower bound for the parametric submodel is proved by a generalized version of Le Cam's method involving a pair of priors, also known as the method of two fuzzy hypotheses \cite{Tsybakov2008}. In Section \ref{subsec.measure_construction} we construct the priors using duality to best approximation and Section \ref{subsec.minimax_lower_bound} finishes the proof. 

In the sequel we assume that $r\ge 1$ is a fixed non-even real number. 

\subsubsection{Reduction to Parametric Submodel}\label{subsec.parametric_reduction}
Fix a smooth function $g(x)$ vanishing outside $[0,1]$ with $\|g\|_{\cB_{p,\infty}^s}=1$, and $\|g\|_1>0$. Set 
\be
h&= (n\ln n)^{-\frac{1}{2s+1}},\\
N&=h^{-1},
\ee
and let $I_1,\cdots,I_N$ be the partition of the interval $[0,1]$ into $N$ subintervals of length $h$ each (without loss of generality we assume that $N$ is an integer), and let $t_i$ be the left endpoint of subinterval $I_i$. With a point $\theta=(\theta_1,\cdots,\theta_N)\in [-\sqrt{\ln N}, \sqrt{\ln N}]^N$ we associate the function
\be
f_\theta(t) = L'\sum_{i=1}^N \theta_i\sqrt{\ln N}\cdot h^sg\left(\frac{t-t_i}{h}\right).
\ee

\begin{lemma}\label{lemma:besov_ball}
	If 
	\be \label{eq.constr_1}
	\theta\in\Theta\triangleq [-\sqrt{\ln N}, \sqrt{\ln N}]^N\cap \left\{\theta: \frac{1}{N}\sum_{i=1}^N |\theta_i|^p \le \left(\frac{2}{\sqrt{\ln N}}\right)^p\right\}, 
	\ee
	then for some constant $L'>0$ independent of $n$, we have $f_\theta(t)\in \cB_{p,\infty}^s(L)$. 
\end{lemma}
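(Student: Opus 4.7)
The plan is to show that $\|f_\theta\|_{\cB_{p,\infty}^s} \le C L'$ for some constant $C$ depending only on $p$, $s$, and $g$, and then choose $L' = L/C$. Write $\phi_i(t) = h^s g((t-t_i)/h)$, so that $f_\theta = L'\sqrt{\ln N}\sum_i \theta_i \phi_i$; each $\phi_i$ is supported on the interval $I_i$ of length $h$, and the $I_i$ are a.e. disjoint. I first bound $\|f_\theta\|_p$ directly: by disjoint supports,
$\|f_\theta\|_p^p = (L')^p (\ln N)^{p/2} h^{sp+1}\|g\|_p^p \sum_i |\theta_i|^p$,
and the constraint $\sum_i |\theta_i|^p \le 2^p N (\ln N)^{-p/2}$ combined with $Nh = 1$ collapses this to $\|f_\theta\|_p \le 2L'\|g\|_p h^s$, which is harmless. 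The case $p=\infty$ is analogous using $\|\theta\|_\infty \le 2/\sqrt{\ln N}$.

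Next I bound the modulus of smoothness $\omega^m(f_\theta,\eta)_p$, where $m = \lfloor s\rfloor+1$, by splitting into two regimes. For $\eta > h$ the trivial bound $\omega^m(f_\theta,\eta)_p \le 2^m \|f_\theta\|_p$ together with the previous step and $h \le \eta$ gives $\omega^m(f_\theta,\eta)_p/\eta^s \le 2^{m+1}L'\|g\|_p$. The main case is $\eta \le h$. Here I compute each $\|\Delta^m_\eta \phi_i\|_p$ by changing variables $u=(t-t_i)/h$:
$\|\Delta^m_\eta \phi_i\|_p = h^{s+1/p}\|\Delta^m_{\eta/h} g\|_p \le h^{s+1/p}(\eta/h)^s = h^{1/p}\eta^s$,
using $\|g\|_{\cB_{p,\infty}^s}=1$.

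The key combinatorial observation is that $\Delta^m_\eta \phi_i(t)$ involves values of $\phi_i$ at the $m+1$ points $t+(k-m/2)\eta$, which lie in an interval of length $m\eta \le mh$; therefore at every fixed $t$ at most $m+1$ of the functions $\Delta^m_\eta \phi_i$ are nonzero. Applying the elementary inequality $|\sum_{i\in S}a_i|^p\le |S|^{p-1}\sum_{i\in S}|a_i|^p$ pointwise, integrating, and using the $\ell^p$ constraint on $\theta$ gives
$\|\Delta^m_\eta f_\theta\|_p^p \le (L')^p (m+1)^{p-1} 2^p \,Nh\,\eta^{sp} = (L')^p (m+1)^{p-1} 2^p \eta^{sp}$,
so $\omega^m(f_\theta,\eta)_p/\eta^s \le 2L'(m+1)^{(p-1)/p}$ for all $\eta \le h$. (For $p=\infty$ the same argument is simpler, bounding the sum pointwise by $(m+1)\|\theta\|_\infty \cdot \eta^s$.) Combining the two regimes with the $L_p$ bound completes the proof.

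The step I expect to require the most care is the finite-overlap argument for $\eta \le h$: one has to verify rigorously that $\Delta^m_\eta \phi_i$ and $\Delta^m_\eta \phi_j$ have overlapping supports for only $O(1)$ pairs at any fixed point, and then combine through the Hölder-type inequality in a way that does not lose the $(\ln N)^{p/2}$ factor that must cancel against the one hidden inside $\theta$. The boundary handling (where $t_i$ is near $0$ or $1-h$) is routine because the difference operator is defined to vanish whenever the shift exits $[0,1]$, and the $L'$ constant is absorbed at the end to fit inside the radius $L$. Neither the pointwise bound $|\theta_i|\le \sqrt{\ln N}$ nor the approximation properties of the kernel play any role here; only the $\ell^p$-average constraint does.
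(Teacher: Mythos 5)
Your proposal is correct, and it reaches the conclusion by a somewhat different route than the paper in the main regime. Both proofs share the same skeleton: compute $\|f_\theta\|_p$ exactly via the disjoint supports of the bumps (where the $\ell^p$-average constraint and $Nh=1$ cancel the $\sqrt{\ln N}$ factor), and split the modulus of smoothness at the scale $\eta\asymp h$, with the regime $\eta\ge h$ handled by the trivial bound $\omega^m(f_\theta,\eta)_p\le 2^m\|f_\theta\|_p$. The difference is in the regime $\eta\le h$: the paper computes $\|f_\theta^{(s_0)}\|_p$ (again exactly, by disjoint supports of the rescaled $g^{(s_0)}$) and invokes the K-functional/derivative bound $\omega^{s_0}(f,t)_p\le Mt^{s_0}\|f^{(s_0)}\|_p$ of Lemma \ref{lmm:Kequiv}, which makes the small-scale estimate a one-line consequence of $t^{s_0-s}\le h^{s_0-s}$; you instead rescale the modulus of $g$ itself and control the interaction between neighboring bumps by the finite-overlap counting (at most $m+1$ of the $\Delta^m_\eta\phi_i$ are nonzero at any point) plus the elementary inequality $|\sum_{i\in S}a_i|^p\le|S|^{p-1}\sum_{i\in S}|a_i|^p$. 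Your route avoids derivatives of $g$ but is slightly heavier combinatorially; the paper's route is shorter because disjointness of supports makes the derivative norm exact, at the cost of quoting the K-functional equivalence. One small imprecision to flag: after the change of variables, $\Delta^m_\eta\phi_i$ samples $\phi_i$ at points outside $I_i$ (where it vanishes), which corresponds to the whole-line modulus of $g$ extended by zero, not the $[0,1]$-truncated modulus entering $\|g\|_{\cB_{p,\infty}^s}=1$; since $g$ is smooth and compactly supported this only changes the bound $\|\Delta^m_{\eta/h}g\|_p\le(\eta/h)^s$ by a constant $C_g$ (e.g.\ via $\min\{2^m\|g\|_p,\,t^m\|g^{(m)}\|_p\}$), which is harmlessly absorbed into $L'$, so the argument stands.
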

\begin{proof}
	Let $s_0 = \lfloor s\rfloor + 1$. Observe that
	\begin{align*}
	\|f_\theta\|_p = L'\|g\|_ph^s\cdot \sqrt{\ln N}\left(\frac{1}{N}\sum_{i=1}^N |\theta_i|^p\right)^{\frac{1}{p}},
	\end{align*}
	the condition \eqref{eq.constr_1} ensures that $h^{-s}\|f_\theta\|_p$ is upper bounded by a numerical constant proportional to $L'$. By \eqref{eq:Kequiv2} in Lemma \ref{lmm:Kequiv}, this implies that there exists a constant $C_0$ independent of $n$ such that $\omega_{s_0}(f,t^r)_p\le C_0L't^{s_0}$ for any $t\ge h$. Moreover, 
	\begin{align*}
	\|f_\theta^{(s_0)}\|_p = L'\|g^{(s_0)}\|_ph^{s-s_0}\cdot \sqrt{\ln N}\left(\frac{1}{N}\sum_{i=1}^N |\theta_i|^p\right)^{\frac{1}{p}},
	\end{align*}
	the condition \eqref{eq.constr_1} ensures that $h^{s_0-s}\|f_\theta^{(s_0)}\|_p$ is upper bounded by a numerical constant proportional to $L'$. By \eqref{eq:Kequiv1} in Lemma \ref{lmm:Kequiv}, this implies that there exists a constant $C_0$ independent of $n$ such that $\omega_{s_0}(f,t^r)\le C_0L't^{s_0}$ for any $t\le h$. Now by the definition of the Besov norm in \eqref{eq.besov_norm}, a suitable choice of the scale parameter $L'$ ensures that $f_\theta(t)\in \cB_{p,\infty}^s(L)$.
\end{proof}

Fix any choice of $L'$ given by Lemma \ref{lemma:besov_ball}. Note that for any $r\ge 1$, we have
\be\label{eq.nonpara_exp}
\|f_\theta\|_r = L'\|g\|_r(n\ln n)^{-\frac{s}{2s+1}}\cdot \sqrt{\ln N}\left(\frac{1}{N}\sum_{i=1}^N |\theta_i|^r\right)^{\frac{1}{r}}.
\ee
Hence, a sufficient condition for Theorem \ref{theorem:lone_lower_bound} and Theorem \ref{theorem:lr_odd_lower_bound} is that
\be\label{eq.para_lower}
\inf_{\hat{T}}\sup_{\theta\in\Theta} \bE_\theta\left( \hat{T} - \left(\frac{1}{N}\sum_{i=1}^N |\theta_i|^r\right)^{\frac{1}{r}}\right)^2 \gtrsim \left(\frac{1}{\sqrt{\ln N}}\right)^2 = \frac{1}{\ln N},
\ee
where the infimum is taken over all estimators $\hat{T}$ which is a measurable real-valued function of $\{Y(t)\}_{t\in [0,1]}$. 

Finally, we note that by the factorization theorem, to estimate $\left(\frac{1}{N}\sum_{i=1}^N |\theta_i|^r\right)^{1/r}$ for $\theta\in\Theta$, the vector $y=(y_1,\cdots,y_N)$ with
\be
y_i \triangleq \frac{\sqrt{n}}{\sigma\|g\|_2\sqrt{h}}\int_{I_i} g\left(\frac{t-t_i}{h}\right)dY(t), \qquad i=1,\cdots,N,
\ee
constitute a sufficient statistic for the Gaussian white noise model. Note that
\be\label{eq.submodel}
y_i = \alpha \theta_i + \xi_i, \qquad i=1,\cdots,N,
\ee
with $\theta\in\Theta$, and 
\be
\alpha &\triangleq \sigma^{-1}L' \|g\|_2 n^{1/2}h^{s+1/2}\cdot \sqrt{\ln N} \asymp 1,\\
\xi_i &\triangleq \frac{1}{\|g\|_2\sqrt{h}}\int_{I_i}g\left(\frac{t-t_i}{h}\right)dB_t.
\ee
As a result, $\xi_1,\cdots,\xi_N$ are i.i.d. $\cN(0,1)$ random variables. Hence, we may further assume that our observation model is 
\begin{align}
y_i \overset{\text{ind}}{\sim} \cN(\alpha \theta_i, 1), \qquad i=1,\cdots,N,
\end{align}
which is a Gaussian location model with $\theta\in \Theta$, and the estimator $\hat{T}$ in \eqref{eq.para_lower} is a function of $(y_1,\cdots,y_N)$. Note that when $r=1$, this parametric subproblem is similar to but very \emph{different} from the problem considered in \cite{cai2011testing}, where the authors did not have the second constraint in (\ref{eq.constr_1}).

\subsubsection{Construction of Two Priors}\label{subsec.measure_construction}
The minimax lower bound \eqref{eq.para_lower} follows from the generalized Le Cam's method involving two priors, which is known as the method of two fuzzy hypotheses presented in \cite{Tsybakov2008}. Suppose we observe a random vector ${\bf Z} \in (\mathcal{Z},\mathcal{A})$ which has distribution $P_\theta$ where $\theta \in \Theta$. Let $\sigma_0$ and $\sigma_1$ be two prior distributions supported on $\Theta$. Write $F_i$ for the marginal distribution of $\mathbf{Z}$ when the prior is $\sigma_i$ for $i = 0,1$. 
Let $\hat{T} = \hat{T}({\bf Z})$ be an arbitrary estimator of a function $T(\theta)$ based on $\bf Z$, and $V(P,Q)$ be the total variation distance between two probability measures $P,Q$ on the measurable space $(\mathcal{Z},\mathcal{A})$. Concretely,
\be
V(P,Q) \triangleq \sup_{A\in \mathcal{A}} | P(A) - Q(A) | = \frac{1}{2} \int |p-q| d\nu,
\ee
where $p = \frac{dP}{d\nu}, q = \frac{dQ}{d\nu}$, and $\nu$ is a dominating measure so that $P \ll \nu, Q \ll \nu$. We have the following general minimax lower bound.

\begin{lemma}\cite[Theorem 2.15]{Tsybakov2008} \label{lemma.tsybakov}
	Suppose that there exist $\zeta\in \mathbb{R}, \delta>0, 0\leq \beta_0,\beta_1 <1$ such that
	\be
	\sigma_0(\theta: T(\theta) \leq \zeta -\delta) & \geq 1-\beta_0, \\
	\sigma_1(\theta: T(\theta) \geq \zeta + \delta) & \geq 1-\beta_1.
	\ee
	If $V(F_1,F_0) \leq \eta<1$, then
	\be
	\inf_{\hat{T}} \sup_{\theta \in \Theta} \bP_\theta\left( |\hat{T} - T(\theta)| \geq \delta \right) \geq \frac{1-\eta - \beta_0 - \beta_1}{2},
	\ee
	where $F_i,i = 0,1$ are the marginal distributions of $\mathbf{Z}$ when the priors are $\sigma_i,i = 0,1$, respectively.
\end{lemma}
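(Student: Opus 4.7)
The plan is to prove this via the standard reduction from estimation to hypothesis testing, averaging the maximum risk over the two priors. First I would pass from worst-case to average, using the elementary fact that $\sup \geq \max \geq \text{average}$:
\[
\sup_{\theta \in \Theta} \bP_\theta\bigl(|\hat{T} - T(\theta)| \geq \delta\bigr) \;\geq\; \frac{1}{2}\sum_{i=0,1} \int \bP_\theta\bigl(|\hat{T} - T(\theta)| \geq \delta\bigr)\, d\sigma_i(\theta).
\]

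Second, for each prior I would convert the estimation error event into a testing event using the prior concentration hypothesis. Under $\sigma_0$, with prior mass at least $1-\beta_0$ we have $T(\theta) \leq \zeta - \delta$, in which case the event $\{\hat{T}(Z) \geq \zeta\}$ forces $|\hat{T}(Z) - T(\theta)| \geq \delta$; a union-style decomposition conditional on $\theta$ and then Fubini over $(\theta, Z)$ gives
\[
\int \bP_\theta\bigl(|\hat{T} - T(\theta)| \geq \delta\bigr)\, d\sigma_0(\theta) \;\geq\; F_0\bigl(\hat{T}(Z) \geq \zeta\bigr) - \beta_0,
\]
and analogously $\int \bP_\theta(|\hat{T} - T(\theta)| \geq \delta)\, d\sigma_1(\theta) \geq F_1(\hat{T}(Z) < \zeta) - \beta_1$, with the direction of the inequality flipped so that the two testing events are complementary.

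Third, I would close the argument using total variation. Setting $A = \{\hat{T}(Z) \geq \zeta\}$, which is measurable because $\hat{T}$ is an estimator, the sum satisfies $F_0(A) + F_1(A^c) = 1 - (F_1(A) - F_0(A)) \geq 1 - V(F_0,F_1) \geq 1 - \eta$ directly from the definition of total variation. Combining the three displays yields the claimed bound $(1-\eta-\beta_0-\beta_1)/2$.

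I do not anticipate a serious obstacle in executing this plan; the proof is a clean textbook argument and the only care needed is in the direction of the bounds when passing from the estimation event to the $\beta_i$-term and in verifying measurability of $A$. The real difficulty in applying this lemma to prove Theorems \ref{theorem:lone_lower_bound} and \ref{theorem:lr_odd_lower_bound} lies elsewhere: one must construct priors $\sigma_0,\sigma_1$ on $\Theta$ which simultaneously (i) concentrate the functional $T(\theta) = (N^{-1}\sum_i |\theta_i|^r)^{1/r}$ on values separated by $2\delta$ with $\delta$ of the right parametric order, and (ii) make the marginals $F_0,F_1$ indistinguishable in total variation, which is precisely the polynomial-approximation duality program set up in Section \ref{subsec.measure_construction}.
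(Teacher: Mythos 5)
Your argument is correct and is exactly the standard proof of this result: the paper itself does not prove the lemma but cites it as Theorem 2.15 of Tsybakov (the method of two fuzzy hypotheses), whose proof proceeds by the same reduction you describe — bound the sup by the average Bayes risk over $\sigma_0,\sigma_1$, convert the estimation event to the test $A=\{\hat{T}(\mathbf{Z})\ge\zeta\}$ at the cost of $\beta_0,\beta_1$, and use $F_0(A)+F_1(A^c)\ge 1-V(F_0,F_1)$. Your closing remark is also accurate: the substantive work in the paper lies in constructing the priors via moment matching and polynomial approximation, not in this lemma.
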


In the remainder of this section, we aim to construct two priors $\mu_0, \mu_1$ supported on $[-\sqrt{\ln N}, \sqrt{\ln N}]$ such that the following conditions hold (the numerical constant $d>0$ is chosen later): 
\begin{align}
&\int t^l \mu_1(dt) = \int t^l \mu_0(dt), \qquad \text{for all } l=0,1,\cdots,K\triangleq \lceil d\ln N\rceil, \label{eq.moment_match} \\
&\int |t|^r \mu_1(dt) - \int |t|^r \mu_0(dt) \gtrsim (\ln N)^{-\frac{r}{2}}, \label{eq.separation} \\
&\int |t|^p \mu_i(dt) \le (\ln N)^{-\frac{p}{2}}, \qquad \text{for }i=0,1. \label{eq.moment_bound}
\end{align}
In the next section we will choose the priors $\sigma_i, i=0,1$ in Lemma \ref{lemma.tsybakov} to be close to the product measure $\mu_i^{\otimes N}, i=0,1$ with each copy given above. The condition \eqref{eq.moment_match} ensures a small total variation distance $V(F_1,F_0)$ in Lemma \ref{lemma.tsybakov}, the condition \eqref{eq.separation} ensures a large $\delta\asymp (\ln N)^{-\frac{r}{2}}$ in Lemma \ref{lemma.tsybakov}, and the condition \eqref{eq.moment_bound} ensures that the support of $\mu_i^{\otimes N}$ is almost supported on $\Theta$ given in \eqref{eq.constr_1}. 

The following result is simply the duality between the problem of best uniform approximation and moment matching. 
\begin{lemma}\label{lem.measure}
	For any bounded interval $I\subseteq \bR$ not containing zero, integers $q\ge 0,K>0$ and continuous function $f$ on $I$, let 
	\[
	E_{q-1,K}(f;I) \triangleq \inf_{\{a_i\}} \sup_{x\in I} \left| \sum_{i=-q+1}^K a_i x^i
	-f(x)\right| 
	\]
	denote the best uniform approximation error of $f$ by rational functions spanned by $\{x^{-q+1},x^{-q+2},\cdots,x^K\}$.
	Then
	\begin{equation}
	\begin{aligned}
	2 E_{q-1,K}(f;I) = \max & ~ \int f(t) \nu_1(dt) - \int f(t) \nu_0(dt),   \\
	\text{\rm s.t.}     & ~ \int t^{l} \nu_1(dt) = \int t^{l} \nu_0(dt), \quad l=-q+1,\cdots,K,
	\end{aligned}
	\label{eq:Rstar}
	\end{equation}
	where the maximum is taken over pairs of probability measures $\nu_0$ and $\nu_1$ supported on $I$.
\end{lemma}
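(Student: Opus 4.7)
The plan is a Hahn--Banach duality argument in $C(I)$, after translating the pair-of-probability-measures formulation into a single signed-measure formulation.

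\textbf{Step 1 (Reformulation).} Given probability measures $\nu_0, \nu_1$ on $I$, set $\mu := \nu_1 - \nu_0$. Then $\mu$ is a signed measure with $\mu(I) = 0$, total variation $\|\mu\|_{TV} \leq 2$, and the moment-matching constraints become $\int t^l\, d\mu = 0$ for $l \in \{-q+1,\ldots,K\}$. Conversely, any signed $\mu$ with $\mu(I) = 0$ and $\|\mu\|_{TV} \leq 2$ can be written as $\nu_1 - \nu_0$ for probability measures $\nu_0, \nu_1$: take the Jordan decomposition $\mu = \mu^+ - \mu^-$, set $c := \mu^+(I) = \mu^-(I) = \tfrac{1}{2}\|\mu\|_{TV} \leq 1$, and define $\nu_1 := \mu^+ + (1-c)\delta_{x_0}$, $\nu_0 := \mu^- + (1-c)\delta_{x_0}$ for any fixed $x_0 \in I$. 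Thus the RHS of \eqref{eq:Rstar} equals the supremum of $\int f\, d\mu$ over $\mu \in V^\perp$ with $\|\mu\|_{TV} \leq 2$, where $V := \operatorname{span}\{t^{-q+1},\ldots,t^K\} \subset C(I)$ (when $q \geq 1$, one assumes $0 \notin \overline{I}$ so these functions are continuous) and $V^\perp \subset M(I)$ is its annihilator.

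\textbf{Step 2 (Duality).} Using $M(I) = C(I)^*$ via Riesz representation, I would invoke the classical best-approximation duality
\begin{equation*}
\operatorname{dist}_{\|\cdot\|_\infty}(f, V) \;=\; \sup\left\{\int f\, d\mu : \mu \in V^\perp,\ \|\mu\|_{TV} \leq 1\right\}.
\end{equation*}
The easy inequality ($\geq$) follows from $\int f\, d\mu = \int (f - P)\, d\mu \leq \|f - P\|_\infty \|\mu\|_{TV}$ for every $P \in V$, applied with $P$ near-optimal. For the reverse inequality ($\leq$), define a linear functional on the finite-dimensional subspace $V \oplus \mathbb{R} f$ by $P + \alpha f \mapsto \alpha \operatorname{dist}(f, V)$; verify its operator norm is one; extend it by Hahn--Banach to a norm-one functional on all of $C(I)$; and represent the extension by a signed measure via Riesz. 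The resulting $\mu$ lies in $V^\perp$ (since it vanishes on $V$), has $\|\mu\|_{TV} \leq 1$, and attains $\int f\, d\mu = \operatorname{dist}(f, V)$. Rescaling $\|\mu\|_{TV} \leq 1$ to $\|\mu\|_{TV} \leq 2$ doubles the supremum, matching the constraint from Step 1 and yielding both sides equal to $2 E_{q-1,K}(f; I)$.

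The only non-elementary ingredient is the Hahn--Banach extension in Step 2, which I expect to be the main obstacle to write carefully; the measure-theoretic manipulation in Step 1 is routine bookkeeping. A small technical point: when $q = 0$, the constraint $\mu(I) = 0$ is not in the moment-matching list \emph{per se}, but is nonetheless enforced because $\nu_0, \nu_1$ are both probability measures, so the effective annihilator is $(V + \mathbb{R} \cdot 1)^\perp$; this is automatically absorbed into the subspace used in the duality and does not affect the argument.
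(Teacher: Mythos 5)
The paper does not prove this lemma; it cites \cite[Lemmas~10 and~12]{jiao2015minimax}. Your Hahn--Banach/Riesz duality argument is correct and is, in fact, the same mechanism used in that reference, so there is no genuine divergence of approach to report. Step~1 (Jordan decomposition and padding with $(1-c)\delta_{x_0}$) and Step~2 (defining the norm-one functional on $V\oplus\mathbb{R}f$, extending, representing) are both sound, the equi-oscillation of norms is handled properly, and the construction shows the maximum is attained rather than merely a supremum, which matches the word ``max'' in \eqref{eq:Rstar}.

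One point in your closing remark is wrong, though it happens not to matter for the paper. When $q=0$ the exponent set $\{-q+1,\dots,K\}=\{1,\dots,K\}$ excludes $0$, so the annihilator relevant to the dual problem is $(V+\mathbb{R}\cdot 1)^{\perp}$ as you say, but the duality then yields $2\,\mathrm{dist}(f,V+\mathbb{R}\cdot 1)=2E_{0,K}(f;I)$, \emph{not} $2E_{-1,K}(f;I)$. Since $1\notin V$ here these can genuinely differ, so your claim that this ``does not affect the argument'' is not quite right: the argument proves \eqref{eq:Rstar} with $E_{q-1,K}$ replaced by $E_{\max\{q-1,0\},K}$, which agrees with the stated lemma only when $q\ge 1$. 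This is harmless for the paper, which applies the lemma with $q=\lceil p/2\rceil\ge 1$. Likewise your parenthetical assumption $0\notin\overline{I}$ (needed for the negative powers to lie in $C(I)$ when $q\ge 2$) is satisfied in the paper's application, where $I=[(\ln N)^{-2},1]$.
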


Lemma \ref{lem.measure} extends the duality results in \cite[Lemma 1]{cai2011testing}, \cite[Lemma 10, Lemma 12]{jiao2015minimax} to rational functions. We list two possible proofs of Lemma \ref{lem.measure}. The first proof relies on the Hahn--Banach theorem and the Riesz representation theorem, where the essential argument is given in \cite{lepski1999estimation}. The second proof makes use of the fact that the rational functions $\{x^{-q+1}, x^{-q+2},\cdots, x^K \}$ form a Chebyshev system in $C(I)$ and therefore the Chebyshev alternation theorem holds \cite[Chapter 3, Theorem 5.1]{Devore--Lorentz1993}, so that the probability measures $\nu_0$ and $\nu_1$ can be explicitly constructed following the similar lines to \cite[Appendix E]{wu2016minimax}. For completeness we include the second proof in Section \ref{subsec:measure_proof}. 

Here we apply this lemma to $f_q(t)=t^{-q+\frac{r}{2}}$ and
\be
K=\lceil d\ln N \rceil, \qquad I = \left[\frac{c}{(\ln N)^2}, 1\right], \qquad  q=\left\lceil\frac{p}{2}\right\rceil, 
\ee
where the constant $c\in (0,1)$ appearing in the definition of $I$ is given by the following lemma, which provides a lower bound for the approximation error of $f_q(t)$.
\begin{lemma}\label{lem.approx}
	Fix a non-even real $r\ge 1$ and an integer $q\ge r/2$. For $f_q(t)=t^{-q+\frac{r}{2}}$, we have
	\be
	\liminf_{n\to\infty} n^{-(2q-r)}E_{q-1,n}\left(f;[\frac{c}{n^2},1]\right) \ge c', 
	\ee
	where constants $c\in (0,1), c'>0$ only depend on $(q,r)$. 
\end{lemma}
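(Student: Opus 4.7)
The plan is to combine an affine rescaling with the duality of Lemma~\ref{lem.measure}, reducing to the classical fact that a continuous function with a non-integer-power singularity cannot be approximated by polynomials of a fixed low degree with vanishing error.

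First I would apply the substitution $t = cs/n^2$, which maps $[c/n^2,1]$ onto $[1,n^2/c]$. Since $f_q(t)=t^{r/2-q}$ and each basis monomial $t^i$ scale homogeneously, a direct computation yields
\[
E_{q-1,n}\!\left(f_q;[c/n^2,1]\right) \;=\; c^{(r-2q)/2}\,n^{2q-r}\cdot E_{q-1,n}\!\left(f_q;[1,n^2/c]\right).
\]
Thus the lemma reduces to the residual claim $\liminf_{n\to\infty} E_{q-1,n}(f_q;[1,n^2/c])>0$.

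Next I would invert via $w = 1/s$, which converts the above into approximating $g(w) := w^{q-r/2}$ on $[c/n^2,1]$ by Laurent polynomials $\hat Q(w)=\sum_{j=-n}^{q-1} c_j w^j$; here non-negative $j\in\{0,\dots,q-1\}$ correspond to the non-positive powers of $s$, while negative $j\in\{-n,\dots,-1\}$ correspond to the positive powers $s^{|j|}$. The point is that $g$ is continuous on $[0,1]$ with $g(0)=0$, and because $r$ is non-even, $q-r/2$ is non-integer, so $g$ is not a polynomial of degree $\leq q-1$. Consequently, the constant
\[
c_{r,q}^* \;:=\; \inf_{P\in\mathsf{Poly}_{q-1}}\sup_{w\in[0,1]}\bigl|\,g(w)-P(w)\,\bigr|
\]
is strictly positive.

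The heart of the proof is to show that the negative-$j$ part $\sum_{j<0} c_j w^j$ of an arbitrary Laurent approximant $\hat Q$ contributes a negligible amount to the error on any fixed sub-interval $[\delta,1]\subset(0,1]$ as $n\to\infty$. The heuristic is that $w^{-k}$ blows up at $w=c/n^2$, so boundedness of $\hat Q$ near $c/n^2$ (forced by the bounded target $g$) constrains $|c_{-k}|\lesssim (c/n^2)^k$; hence on $[\delta,1]$ the negative-power contribution is at most $\sum_{k\geq 1}(c/(n^2\delta))^k=o(1)$. This is the main obstacle and requires care because individual coefficients could partially cancel in the Laurent sum; I would make it rigorous by evaluating $\hat Q$ at roughly $n+q$ geometrically-spaced nodes in $[c/n^2,1]$ and inverting the resulting Vandermonde-type system to obtain explicit bounds on each $c_{-k}$.

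With this in hand, for every $\varepsilon>0$ one may choose $\delta>0$ small and $n$ large so that any Laurent $\hat Q$ satisfies
\[
\sup_{w\in[\delta,1]}\bigl|g(w)-\hat Q(w)\bigr| \;\geq\; c_{r,q}^*(\delta)-\varepsilon,
\]
where $c_{r,q}^*(\delta)$ denotes the best degree-$(q-1)$ polynomial approximation of $g$ on $[\delta,1]$. Continuity of the best-approximation functional in the interval gives $c_{r,q}^*(\delta)\to c_{r,q}^*$ as $\delta\downarrow 0$, and combining with the rescaling of Step~1 yields $\liminf_n n^{-(2q-r)}E_{q-1,n}(f_q;[c/n^2,1])\geq c' := \tfrac{1}{2}c^{(r-2q)/2}c_{r,q}^*>0$, completing the proof.
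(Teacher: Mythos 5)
Your first two reductions are correct and cleanly done: the affine rescaling $t=cs/n^2$ does give
\[
E_{q-1,n}\!\left(f_q;[c/n^2,1]\right) = c^{\,r/2-q}\,n^{2q-r}\,E_{q-1,n}\!\left(f_q;[1,n^2/c]\right),
\]
and the inversion $w=1/s$ legitimately turns the problem into approximating $g(w)=w^{q-r/2}$ on $[c/n^2,1]$ by Laurent sums $\hat Q(w)=\sum_{j=-n}^{q-1}c_j w^j$. The positivity of $c^*_{r,q}$ for non-even $r$ is also correct. The trouble is entirely in the step you yourself flag as ``the heart of the proof.''

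The heuristic ``boundedness of $\hat Q$ near $w=c/n^2$ constrains $|c_{-k}|\lesssim (c/n^2)^k$'' is not just un-rigorous but false, and the error is exponential. To see this, suppose $\sup_{[c/n^2,1]}|\hat Q-g|\le E$, hence $|\hat Q|\le 1+E$ on $[c/n^2,1]$. Set $P(w):=w^n\hat Q(w)=\sum_{j=0}^{n+q-1}c_{j-n}w^j$, a genuine polynomial of degree $n+q-1$ with $|P(w)|\le (1+E)w^n\le 1+E$ on $[c/n^2,1]$. Your heuristic predicts $|c_{-n}|=|P(0)|\lesssim(c/n^2)^n$, i.e.\ super-exponentially small. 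But $0$ sits only a distance $c/n^2$ outside an interval of length $\approx 1$, and Chebyshev extrapolation gives only $|P(0)|\le |T_{n+q-1}(x_0)|\,(1+E)$, where $x_0$ is the image of $0$ under the affine map of $[c/n^2,1]$ to $[-1,1]$. Since $x_0\approx -1-2c/n^2$ and $|T_m(1+\varepsilon)|\approx\cosh(m\sqrt{2\varepsilon})$, one gets $|P(0)|\lesssim\cosh(2\sqrt{c})\,(1+E)$, a \emph{constant}. A coefficient $c_{-n}$ of constant size makes the single term $c_{-n}w^{-n}$ of order $\delta^{-n}$ on $[\delta,1]$ — not $o(1)$ but exponentially large. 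So the quantity you want to call ``negligible'' is governed by cancellations among the $c_{-k}w^{-k}$, and a term-by-term bound cannot capture it. The Vandermonde patch you propose will not rescue this: recovering $n$ coefficients of a degree-$n$ polynomial from nodes in a short interval near one of its endpoints is exponentially ill-conditioned, which is exactly the phenomenon that makes $|c_{-n}|$ constant rather than tiny.

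More structurally, the conclusion you aim for (that the negative-power part is small on $[\delta,1]$) need not even hold: the optimal Laurent approximant may well track $g$ closely on $[\delta,1]$, in which case $\hat Q_-=\hat Q-\hat Q_+$ is forced to have size comparable to $c^*_{r,q}(\delta)$ there, since $\hat Q_+$ alone (a degree-$(q-1)$ polynomial) cannot match $g$ better than that. The lower bound has to come from what happens near the left endpoint $c/n^2$, not from $[\delta,1]$. This is precisely why the paper's proof proceeds differently: after reparametrizing to $[0,1]$, it lower-bounds the best approximation error by the first-order Ditzian--Totik modulus $\omega_\varphi^1(\tilde g,\tfrac{1}{Dn})_\infty$ via Lemma \ref{lem.DT_modulus}, then evaluates the finite difference at scale $\sim n^{-2}$ near $x=0$, extracts the $(Dn)^{2q-r}$ scaling, and reduces to the linear independence of the functions $h_k(t)=(t+1)^{-k}-t^{-k}$ for $k\in\{1,\dots,q-1,q-\tfrac r2\}$ on a fixed interval $[A,B]$ — this is where ``$r$ non-even'' is used, since it guarantees $q-\tfrac r2\notin\{1,\dots,q-1\}$. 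A case split on $E_{0,0}(\tilde g;[0,1])$ handles the possibility that the free parameters $a_1,\dots,a_{q-1}$ drive the function (and thus the approximation error) to be large in a different way. Both ingredients — quantitative control of the modulus near the singular endpoint, and linear independence of the finite differences — are absent from your plan and are what make the argument go through.
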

By Lemma \ref{lem.approx} and our definitions of $f$, $I$ and $K$, we conclude that
\be
E_{q-1,K}(f;I) \gtrsim (\ln N)^{2q-r}.
\ee
Let $\nu_0, \nu_1$ be the maximizers of \eqref{eq:Rstar}. We define probability measures $\tilde{\nu}_0, \tilde{\nu}_1$ by
\be
\tilde{\nu}_i(dx) = \left[1-\bE_{X\sim \nu_i} \left(\frac{c^q}{(\ln N)^{2q}X^q}\right) \right]\delta_0(dx) + \left(\frac{c}{(\ln N)^2x}\right)^{q} \nu_i(dx)\quad i=0,1,
\ee
where $\delta_0(\cdot)$ is the delta measure at zero. It is straightforward to verify that $\tilde{\nu}_i$ forms a probability measure supported on $[0,1]$, and
\begin{enumerate}
	\item $\int t^l\tilde{\nu}_1(dt) = \int t^l \tilde{\nu}_0(dt)$, for all $l=0,1,\cdots,q+K$;
	\item $\int t^{\frac{r}{2}}\tilde{\nu}_1(dt) - \int t^{\frac{r}{2}}\tilde{\nu}_0(dt) \gtrsim (\ln N)^{-r}$;
	\item $\int t^{q}\tilde{\nu}_i(dt)= c^q(\ln N)^{-2q}\le (\ln N)^{-2q}$, for $i=0,1$.
\end{enumerate}

Finally, we define the measures $\mu_0, \mu_1$ as follows. For $i=0,1$, let $X_i$ follow the distribution $\tilde{\nu}_i$, the measure $\mu_i$ is defined as the probability distribution of $\epsilon_i\sqrt{X_i\cdot \ln N}$, where $\epsilon_i\sim\mathsf{Unif}(\{\pm1\})$ is independent of $X_i$. Clearly the measures $\mu_0, \mu_1$ are supported on $[-\sqrt{\ln N}, \sqrt{\ln N}]$, and it remains to check the conditions \eqref{eq.moment_match} to \eqref{eq.moment_bound}. 

For \eqref{eq.moment_match}, since $\mu_i$ is symmetric around zero, the condition clearly holds for odd $l$. For even $l=2k$ with $0\le k\le K$, by the first property of $\tilde{\nu}_i$ we have
\begin{align*}
\int t^l \mu_1(dt) &= \int (\epsilon_i \sqrt{t\cdot \ln N})^{2k} \mu_1(dt) = (\ln N)^k \int t^k\mu_1(dt) \\
&= (\ln N)^k \int t^k\mu_0(dt) = \int (\epsilon_i \sqrt{t\cdot \ln N})^{2k} \mu_0(dt) = \int t^l \mu_0(dt), 
\end{align*}
i.e., \eqref{eq.moment_match} holds. Similarly, the condition \eqref{eq.separation} is checked via
\begin{align*}
\int |t|^r \mu_1(dt) - \int |t|^r \mu_0(dt) = (\ln N)^{\frac{r}{2}} \int t^{\frac{r}{2}}(\tilde{\nu}_1(dt) - \tilde{\nu}_2(dt)) \gtrsim (\ln N)^{-\frac{r}{2}}. 
\end{align*}
Finally, for \eqref{eq.moment_bound}, first note that
\begin{align*}
\int t^{2q}\mu_i(dt) = (\ln N)^q \int t^q \tilde{\nu}_i(dt) \le (\ln N)^{-q}, \qquad i=0,1.
\end{align*}
Since $2q\ge p$, H\"{o}lder's inequality yields
\be
\left(\int |t|^p \mu_i(dt) \right)^{\frac{1}{p}} \le \left(\int |t|^{2q} \mu_i(dt) \right)^{\frac{1}{2q}}\le \frac{1}{\sqrt{\ln N}},
\ee
i.e., \eqref{eq.moment_bound} holds. Hence, the construction of $\mu_0,\mu_1$ satisfies all conditions in \eqref{eq.moment_match} to \eqref{eq.moment_bound}. This construction is partially inspired by \cite{wu2016minimax}. We remark that the construction heavily relies on the fact that $p$ is finite, where for $p=\infty$, Lemma \ref{lem.approx} fails and the condition \eqref{eq.moment_bound} would require that the priors $\mu_0, \mu_1$ be supported on a smaller interval $[-\frac{1}{\sqrt{\ln N}}, \frac{1}{\sqrt{\ln N}}]$.

%
%

\subsubsection{Minimax Lower Bound in the Parametric Submodel}\label{subsec.minimax_lower_bound}
In this section we invoke Lemma \ref{lemma.tsybakov} to finish the proof of \eqref{eq.para_lower}, thereby proving the lower bound in Theorems \ref{theorem:lone_lower_bound} and \ref{theorem:lr_odd_lower_bound}. Consider the probability measures $\mu_0,\mu_1$ constructed in the Section \ref{subsec.measure_construction}, and define
\be
\Delta = \int |t|^r\mu_1(dt) - \int |t|^r\mu_0(dt) \asymp (\ln N)^{-\frac{r}{2}}. 
\ee
Denote by $\mu_i^{\otimes N}$ the $N$-fold product of $\mu_i$. Consider the following event:
\be
E_i \triangleq \{\theta: \theta\in\Theta\} \cap \left\{\theta: \left|\frac{1}{N}\sum_{j=1}^N|\theta_j|^r - \bE_{\mu_i} |\theta|^r\right| \le \frac{\Delta}{4} \right\}, \qquad i=0,1. 
\ee
By Chebyshev's inequality it is easy to show that for $i=0,1$, 
\be
\mu_i^{\otimes N}(\left\{ \theta:\theta\notin \Theta \right\}) &= \mu_i^{\otimes N} \left(\left\{\theta: \frac{1}{N}\sum_{j=1}^N |\theta_j|^p > \left(\frac{2}{\sqrt{\ln N}}\right)^p\right\}\right)\\
&\le \mu_i^{\otimes N} \left(\left\{\theta: \frac{1}{N}\sum_{j=1}^N |\theta_j|^p - \bE_{\mu_i} |\theta|^p> \left(\frac{1}{\sqrt{\ln N}}\right)^p\right\}\right)\\
&\le \left(\frac{1}{\sqrt{\ln N}}\right)^{-2p}\cdot \mathsf{Var}_{\mu_i^{\otimes N}}\left(\frac{1}{N}\sum_{j=1}^N |\theta_j|^p\right)\\
&= \frac{1}{N}\left(\frac{1}{\sqrt{\ln N}}\right)^{-2p}\cdot \mathsf{Var}_{\mu_i}(|\theta|^p) \\
&\le \frac{1}{N}(\frac{1}{\sqrt{\ln N}})^{-2p}\cdot (\sqrt{\ln N})^{2p} \to 0, 
\ee
and
\be
\mu_i^{\otimes N}\left(\left\{\theta: \left|\frac{1}{N}\sum_{j=1}^N|\theta_j|^r - \bE_{\mu_i} |\theta|^r\right| > \frac{\Delta}{4} \right\}\right) &\le \frac{16}{\Delta^2}\mathsf{Var}_{\mu_i^{\otimes N}}\left(\frac{1}{N}\sum_{j=1}^N |\theta_j|^r\right)\\
&\le \frac{16}{N\Delta^2}\cdot (\sqrt{\ln N})^{2r} \to 0.
\ee

Hence, by the union bound, we have
\be
\mu_i^{\otimes N}(E_i^c) \le \mu_i^{\otimes N}(\left\{ \theta:\theta\notin \Theta \right\}) + \mu_i^{\otimes N}\left(\left\{\theta: \left|\frac{1}{N}\sum_{j=1}^N|\theta_j|^r - \bE_{\mu_i} |\theta|^r\right| > \frac{\Delta}{4} \right\}\right) \to 0
\ee
for any $i=0,1$. 

Now we are ready to apply Lemma \ref{lemma.tsybakov} to
\be
T(\theta) &= \left(\frac{1}{N}\sum_{i=1}^N |\theta_i|^r \right)^\frac{1}{r},\\
\zeta &= \left( \frac{\bE_{\mu_1^{\otimes N}} [T(\theta)^r] + \bE_{\mu_0^{\otimes N}} [T(\theta)^r]}{2}\right)^{\frac{1}{r}},
\ee
and let the prior $\sigma_i$ be the conditional distribution of $\mu_i^{\otimes N}$ conditioning on $E_i$, i.e., 
\be
\sigma_i(\cdot) = \frac{\mu_i^S(\cdot\cap E_i)}{\mu_i^S(E_i)}, \qquad i=0,1.
\ee

By definition of $E_i$, the measure $\sigma_i$ is a valid prior on $\Theta$. Moreover, under $\sigma_1$ we have
\be
T(\theta)^r - \zeta^r \ge \frac{\Delta}{4}.
\ee
By definition of $E_1$ and $\Theta$, under $\sigma_1$ we have
\be
T(\theta) &= \left(\frac{1}{N}\sum_{i=1}^N |\theta_i|^r \right)^\frac{1}{r} \le \left(\frac{1}{N}\sum_{i=1}^N |\theta_i|^p \right)^\frac{1}{p}\le \frac{2}{\sqrt{\ln N}},\\
\zeta &\le \frac{1}{2}\left(\int |t|^r \mu_1(dt)\right)^\frac{1}{r} + \frac{1}{2}\left(\int |t|^r \mu_0(dt)\right)^\frac{1}{r} \\
& \le \frac{1}{2}\left(\int |t|^p \mu_1(dt)\right)^\frac{1}{p} + \frac{1}{2}\left(\int |t|^p \mu_0(dt)\right)^\frac{1}{p} \le \frac{1}{\sqrt{\ln N}}.
\ee
Hence, using the inequality $a^r-b^r\le r(a^{r-1}+b^{r-1})(a-b)$ for any $a\ge b>0$ and $r\ge 1$, the previous inequalities yield that under $\sigma_1$, 
\be
T(\theta) - \zeta \ge \frac{[T(\theta)]^r - \zeta^r}{r([T(\theta)]^{r-1}+\zeta^{r-1})} \gtrsim \frac{\Delta}{(\ln N)^{-\frac{r-1}{2}}} \gtrsim \frac{1}{\sqrt{\ln N}}.
\ee
Similarly, under $\sigma_0$ we have
$
T(\theta) - \zeta \lesssim - \frac{1}{\sqrt{\ln N}}, 
$
hence in Lemma \ref{lemma.tsybakov} we can set
\be
\delta \gtrsim \frac{1}{\sqrt{\ln N}}, 
\ee
so that $\beta_0=\beta_1=0$. 

Now denote by $F_0,F_1$ the marginal distributions of $\mathbf{Z}$ based on priors $\sigma_0,\sigma_1$, and the counterparts $G_0,G_1$ based on priors $\mu_0^{\otimes N},\mu_1^{\otimes N}$. By the data-processing property of the total variation distance, we have
\be
V(F_i,G_i) \le V(\sigma_i, \mu_i^{\otimes N}) = \mu_i^{\otimes N}(E_i^c) \to 0, \qquad i=0,1.
\ee
Moreover, \cite{cai2011testing} shows that the $\chi^2$ distance between $G_0$ and $G_1$ is upper bounded as
\be
\chi^2(G_0,G_1) \le \left(1 + e^{3\alpha^2\ln N/2}\left(\frac{\alpha e\ln N}{d\ln N}\right)^{d\ln N}\right)^N - 1. 
\ee
Hence, for choosing $d>0$ large enough, $\chi^2(G_0,G_1)$ is upper bounded by a universal constant $C$. Now by Lemma \ref{lemma:TV_chi}, we have
\be
V(G_0,G_1) \le 1-\frac{1}{2}\exp(-C). 
\ee

In summary, the triangle inequality for total variation distance yields
\be
V(F_0,F_1) &\le V(F_0,G_0) + V(G_0,G_1) + V(G_1,F_1) \to 1-\frac{1}{2}\exp(-C)  < 1, 
\ee
and Lemma \ref{lemma.tsybakov} together with Markov's inequality yields
\be
\inf_{\hat{T}}\sup_{\theta\in\Theta} \bE_\theta\left( \hat{T} - \left(\frac{1}{N}\sum_{i=1}^N |\theta_i|^r\right)^{\frac{1}{r}}\right)^2 &\gtrsim \delta^2\cdot \inf_{\hat{T}}\sup_{\theta\in\Theta} \bP\left(|\hat{T}-T(\theta)|\ge \delta\right)\\
&\gtrsim \frac{1}{\ln N}\cdot \frac{1}{4}\exp(-C)\gtrsim \frac{1}{\ln N}, 
\ee
which is (\ref{eq.para_lower}), as desired. \qed

\subsection{Proof of Theorem \ref{theorem:lr_odd_adaptive}}
For $s\in [0,s_{\max}]$ define the ideal bandwidth $h^*: [0,s_{\max}]\rightarrow \mathcal{H}$ by
\be 
h^*(s)\triangleq \frac{1}{\lfloor (n\ln{n}) ^{\frac{1}{2s+1}}\rfloor}.
\ee
Then
\be 
\ & \E\left(\hat{T}-\|f\|_r\right)^2\\
&=\E\left[\left(\hat{T}-\|f\|_r\right)^2\mathbbm{1}\left(\hat{h}\geq h^*(s)\right)\right]+\E\left[\left(\hat{T}-\|f\|_r\right)^2\mathbbm{1}\left(\hat{h}< h^*(s)\right)\right]\\
&=\text{I}+\text{II}.
\ee
First note that
\be 
\text{I} &\leq 2\left\{\E\left[\left(T_{h^*(s)}-\|f\|_r\right)^2\right]+\E\left[\left({T}_{\hat{h}}-{T}_{h^*(s)}\right)^2\mathbbm{1}\left(\hat{h}\geq h^*(s)\right)\right]\right\}\\
&\leq 2\left\{C_0(n\ln{n})^{-\frac{2s}{2s+1}}+C^*\frac{\lambda_{h^*(s)}^2}{\ln n}\right\} \le C_0^*(n\ln{n})^{-\frac{2s}{2s+1}} ,\label{eqn:lr_even_lepski_no_error}
\ee
where the last line follows from the definitions of $h^*(s)$ and $\hat{h}$ respectively, and $C_0$ is a constant that depends on $(c_1,c_2,\epsilon,s_{\max},\sigma,L)$. 


To upper bound $\text{II}$, we have
\be 
\text{II}&=\E\left[\left(\hat{T}-\|f\|_r\right)^2\mathbbm{1}\left(\hat{h}< h^*(s)\right)\right] \le L^2\cdot \bP(\hat{h} < h^*(s))\\
&\le L^2\cdot \sum_{h<h^*(s):h\in \mathcal{H}}\P\left(|T_h-T_{h^*(s)}|>\lambda_h\sqrt{\frac{C^*}{\ln n}}\right),\label{eqn:lr_even_lepski_term_II_prob}
\ee
where the first inequality is due to that $\hat{T}, \|f\|_r\in [0,L]$, and the second inequality follows from the fact that $h^*(s)$ is not a feasible condidate in \eqref{eq.adaptation}. Let $$\gamma_n(h)\triangleq \lambda_h\sqrt{\frac{C^*}{\ln n}},$$
below we reduce the control of $\P\left(|T_h-T_{h^*(s)}|>\gamma_n(h)\right)$ to suitable controls over $$\E\left|\int_{0}^1(T_h(x)-\E T_h(x))dx\right|^k$$ for some $k$ to be chosen a large enough constant depending on the tuple $(s_{\max},r,\sigma,p)$. Consequently, in the following lemma we demonstrate the desired control over central moments of $\int_{0}^{1}T_h(x)dx$ for every $h\in \mathcal{H}$.

\begin{lemma}\label{lemma:lr_odd_phih_moments}
	Let $f\in \cB_{p,\infty}^s(L)$ and $r\ge 1$ be non-even. Then, under the assumptions of Theorem \ref{theorem:lr_odd_adaptive}, for any $h\in \mathcal{H}$ and integer $k\geq 2$, 
	\be 
	\ & \E\left|\int_{0}^1(T_h(x)-\E T_h(x))dx\right|^k\\
	&\leq C(r,k,c_1,\sigma)n^{2k\epsilon}\left(\begin{array}{c}\lambda_h^{kr}(h^{k-1}+h^{\frac{k}{2}}) \\+ \lambda_h^k(h^{k-1}\|f_h\|_{k(r-1)}^{k(r-1)} + h^{\frac{k}{2}}\|f_h\|_{2(r-1)}^{k(r-1)})\end{array}\right),
	\ee 
	where $C(r,k,c_1,\sigma)$ is a constant depending on $(r,k,c_1,\sigma)$.
\end{lemma}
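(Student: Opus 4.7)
The proof strategy rests on the $h$-dependence of the random field $x\mapsto T_h(x)$ combined with a blocking argument and Rosenthal's inequality for independent sums, with the per-point moment bound supplied by Lemma \ref{lemma:overall_lr_odd}.

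Set $Z(x)\triangleq T_h(x)-\E T_h(x)$. Because the kernel $K_M$ is supported on $[-\frac{1}{2},\frac{1}{2}]$, each of $\tilde{f}_{h,1}(x), \tilde{f}_{h,2}(x), \tilde{f}_{h,3}(x)$ is a measurable function of the Brownian increment on $[x-h/2,x+h/2]$, so $Z(x)$ and $Z(y)$ are independent whenever $|x-y|>h$. I will partition $[0,1]$ into $N=\lceil 1/h\rceil$ consecutive intervals $I_\ell$ of width $h$ and define the block integrals $W_\ell\triangleq\int_{I_\ell} Z(x)\,dx$. By the previous observation, $\{W_\ell:\ell\text{ odd}\}$ and $\{W_\ell:\ell\text{ even}\}$ are each families of independent zero-mean random variables, since the Brownian increments driving $W_\ell$ and $W_{\ell+2}$ lie in disjoint sub-intervals. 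Splitting via $|a+b|^k\le 2^{k-1}(|a|^k+|b|^k)$ and applying Rosenthal's inequality to each independent subsum gives
\be
\E\left|\int_0^1 Z(x)\,dx\right|^k \le C(k)\left(\sum_{\ell} \E|W_\ell|^k + \left(\sum_{\ell}\E W_\ell^2\right)^{k/2}\right).
\ee

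For the per-block bounds, H\"older's inequality yields $|W_\ell|^k\le h^{k-1}\int_{I_\ell}|Z(x)|^k\,dx$ and $W_\ell^2\le h\int_{I_\ell}Z(x)^2\,dx$, so summing over $\ell$ reduces matters to controlling $\int_0^1\E|Z(x)|^{k'}\,dx$ for $k'\in\{2,k\}$. Lemma \ref{lemma:overall_lr_odd} (with Lemma \ref{lemma:lone_xi_bias_var} in its place when $r=1$), applicable once $c_1>\sqrt{8k}$, gives the pointwise bound $\E|Z(x)|^{k'}\le C n^{2k'\epsilon}\lambda_h^{k'}(|f_h(x)|^{k'(r-1)}+\lambda_h^{k'(r-1)})$; integrating delivers $\int_0^1\E|Z(x)|^{k'}\,dx\le C n^{2k'\epsilon}(\lambda_h^{k'}\|f_h\|_{k'(r-1)}^{k'(r-1)}+\lambda_h^{k'r})$. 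Plugging this in with $k'=k$ bounds the first Rosenthal term by $C n^{2k\epsilon}h^{k-1}(\lambda_h^k\|f_h\|_{k(r-1)}^{k(r-1)}+\lambda_h^{kr})$, while $k'=2$ together with $(a+b)^{k/2}\le 2^{k/2}(a^{k/2}+b^{k/2})$ bounds the variance term by $C' n^{2k\epsilon}h^{k/2}(\lambda_h^k\|f_h\|_{2(r-1)}^{k(r-1)}+\lambda_h^{kr})$. Adding the two exactly reproduces the claimed bound.

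The main obstacle I anticipate is the independence step: I must verify that the sample-splitting scheme for the Brownian motion, which yields the three ``independent'' copies $\tilde{f}_{h,j}(x)$, preserves the $h$-range locality so that odd-indexed block integrals remain independent across $j$; this should follow because sample splitting partitions the underlying Brownian increments and each $\tilde{f}_{h,j}(x)$ is still supported on $[x-h/2,x+h/2]$. Apart from this bookkeeping, the rest of the argument is routine, and the two regimes $h^{k-1}$ (single-block, all-mass-in-one-cluster) and $h^{k/2}$ (pairwise-variance) emerge automatically from the Rosenthal decomposition, with the crucial feature that the variance contribution involves the smaller norm $\|f_h\|_{2(r-1)}$ rather than $\|f_h\|_{k(r-1)}$, which is exactly what the stated bound requires.
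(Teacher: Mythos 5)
Your proof matches the paper's argument step for step: partition $[0,1]$ into width-$h$ blocks, split into odd and even families to get independent zero-mean block integrals, apply Rosenthal's inequality to each family, reduce the per-block moments by Jensen/H\"older to pointwise moments, and invoke Lemma \ref{lemma:overall_lr_odd} (or Lemma \ref{lemma:lone_xi_bias_var} for $r=1$) for the pointwise bound. The locality concern you flag about the sample-split copies $\tilde{f}_{h,j}(x)$ is not a gap; each $\tilde f_{h,j}$ depends only on the Brownian increment over $[x-h/2,x+h/2]$, exactly as you guessed, so the argument closes.
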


By Lemma \ref{lemma:norm} and $h\in [0,1]$, the following corollary is immediate. 
\begin{corollary}\label{cor:lr_odd_phih_moments}
	Under the assumptions of Theorem \ref{theorem:lr_odd_adaptive}, for any $h\in \mathcal{H}$ and integer $k\ge 2$, 
	\be 
	\ & \E\left|\int_{0}^1(T_h(x)-\E T_h(x))dx\right|^k\\
	&\leq C'(r,k,c_1,\sigma)n^{2k\epsilon}\left(\begin{array}{c}\lambda_h^{kr}h^{\frac{k}{2}} + \lambda_h^k(h^{\frac{k-1}{r}}\|f\|_r^{(k-1)(r-1)}\|f_h\|_{r}^{r-1} \\+ h^{\frac{k}{2r}}\|f\|_r^{k(r-1)/2}\|f_h\|_{r}^{k(r-1)/2})\end{array}\right),
	\ee 
	where $C'(r,k,c_1,\sigma)$ is a constant depending on $(r,k,c_1,\sigma)$.
\end{corollary}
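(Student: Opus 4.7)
The corollary is an immediate simplification of Lemma \ref{lemma:lr_odd_phih_moments}, so my plan is really just to rewrite its right-hand side term by term, using (i) the trivial bound $h\le 1$ and (ii) the interpolation-type estimate Lemma \ref{lemma:norm} that was already invoked in the proof of Theorem \ref{theorem:lr_odd_bias_var} (where, for instance, $\|f_h\|_{2r-2}^{2r-2}$ was dominated by $C(r,K_M)h^{-1+1/r}\|f\|_r^{r-1}\|f_h\|_r^{r-1}$).

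First I would handle the leading $\lambda_h^{kr}(h^{k-1}+h^{k/2})$ term. Since $k\geq 2$ and $h\in[0,1]$ (using $h\in\mathcal{H}\subset(0,1]$), the inequality $h^{k-1}\leq h^{k/2}$ collapses this to $2\lambda_h^{kr}h^{k/2}$, which is the first summand in the corollary. Next I would treat the two summands that carry $L_q$ norms of $f_h$. For $h^{k-1}\|f_h\|_{k(r-1)}^{k(r-1)}$, applying Lemma \ref{lemma:norm} with exponent $q=k(r-1)$ yields a bound of shape $h^{-(k-1)(r-1)/r}\|f\|_r^{(k-1)(r-1)}\|f_h\|_r^{r-1}$; multiplying by $h^{k-1}$ collapses the $h$-exponent to $(k-1)-(k-1)(r-1)/r=(k-1)/r$, matching the middle summand of the corollary. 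For $h^{k/2}\|f_h\|_{2(r-1)}^{k(r-1)}$ I would rewrite $\|f_h\|_{2(r-1)}^{k(r-1)}=\bigl(\|f_h\|_{2(r-1)}^{2(r-1)}\bigr)^{k/2}$ and apply Lemma \ref{lemma:norm} at $q=2(r-1)$, producing $h^{-k(r-1)/(2r)}\|f\|_r^{k(r-1)/2}\|f_h\|_r^{k(r-1)/2}$; multiplying by $h^{k/2}$ collapses the $h$-exponent to $k/2-k(r-1)/(2r)=k/(2r)$, matching the last summand.

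Finally I would absorb the numerical constants produced by Lemma \ref{lemma:norm} (which depend on $r$, $k$, $K_M$) together with the constant $C(r,k,c_1,\sigma)$ from Lemma \ref{lemma:lr_odd_phih_moments} into a single $C'(r,k,c_1,\sigma)$, and sum the three simplified terms to obtain the claimed bound. There is no substantive obstacle: the only care required is in the exponent arithmetic for the two interpolation steps, where one must check that the powers of $h$ resulting from Lemma \ref{lemma:norm} cancel exactly against the factors $h^{k-1}$ and $h^{k/2}$ to leave $h^{(k-1)/r}$ and $h^{k/(2r)}$ respectively. Everything else is routine.
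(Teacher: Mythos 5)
Your proposal is correct and is essentially the paper's own argument: the paper derives the corollary from Lemma \ref{lemma:lr_odd_phih_moments} precisely by invoking Lemma \ref{lemma:norm} together with $h\in[0,1]$, and your exponent bookkeeping (collapsing $h^{k-1}+h^{k/2}$ to $h^{k/2}$, and the two interpolation steps yielding $h^{(k-1)/r}$ and $h^{k/(2r)}$) matches what is needed. The only cosmetic point is that the constants from Lemma \ref{lemma:norm} also depend on the kernel $K_M$, a dependence the paper itself suppresses in writing $C'(r,k,c_1,\sigma)$.
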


We defer the proof of Lemma \ref{lemma:lr_odd_phih_moments} to Section \ref{section:technical_lemmas} and complete the proof of Theorem \ref{theorem:lr_odd_adaptive} assuming its validity. Recall that $\|f-f_h\|_r\le C_1h^s$ for some $C_1=C_1(s,r,L,K_M)$ for any $f\in \cB_{p,\infty}^s(L)$. We divide the subsequent analysis into two cases. First consider the case when $\|f_h\|_r\leq \frac{C_1}{\sqrt{nh\ln n}}$, then
\be
\P\left(|T_h-T_{h^*(s)}|>\gamma_n(h)\right)
&\leq \P\left(T_h>\frac{\gamma_n(h)}{2}\right)+\P\left(T_{h^*(s)}>\frac{\gamma_n(h)}{2}\right)\\
&\leq 
\P\left(T_h>\frac{\gamma_n(h)}{2}\right)+\P\left(T_{h^*(s)}>\frac{\gamma_n(h^*(s))}{2}\right).
\label{eqn:lr_odd_lepski_error_decomp_fhsmall}
\ee
where the last inequality uses the monotone decreasing nature of $h\mapsto \gamma_n(h)$. By \eqref{eqn:lr_odd_phih_bias_var} and the triangle inequality, we have
\begin{align*}
\int_0^1 \bE T_h(x)dx \le \|f_h\|_r^r + \left|\int_0^1 \bE T_h(x)dx - \|f_h\|_r^r \right| \le C_2(nh\ln n)^{-\frac{r}{2}},
\end{align*}
where $C_2>0$ is a constant depending on $(c_1,c_2,r,C_1)$. Choosing $C^*$ large enough such that
\begin{align*}
\left(\frac{\gamma_n(h)}{4}\right)^r \ge C_2(nh\ln n)^{-\frac{r}{2}},
\end{align*}
for any $k\ge 2$ we have
\be 
\  \P\left(T_h>\frac{\gamma_n(h)}{2}\right)
&\le \P\left(\int_{0}^1T_h(x)dx>\left(\frac{\gamma_n(h)}{2}\right)^r\right)\\
&\le \P\left(\int_{0}^1T_h(x)dx-\E\left(\int_{0}^1T_h(x)dx\right)> \left(\frac{\gamma_n(h)}{2}\right)^r-C_2(nh\ln n)^{-\frac{r}{2}}\right)\\
&\leq \P\left(\left|\int_{0}^1T_h(x)dx-\E\left(\int_{0}^1T_h(x)dx\right)\right|> \left(\frac{\gamma_n(h)}{4}\right)^r\right)\\
&\leq \frac{4^{rk}\E\left|\int_{0}^1T_h(x)dx-\E\left(\int_{0}^1T_h(x)dx\right)\right|^k}{(\gamma_n(h))^{kr}}.
\ee
However, note that
\begin{align*}
\|f\|_r \le \|f-f_h\|_r + \|f_h\|_r \le C_1h^s + \frac{C_1}{\sqrt{nh\ln n}} \le C_3\gamma_n(h),
\end{align*}
where $C_3$ is a constant depending on $(C_1,C^*)$, Corollary \ref{cor:lr_odd_phih_moments} yields
\be 
\ &  \E\left|\int_{0}^1(T_h(x)-\E T_h(x))dx\right|^k
\\ 
&\le C_4n^{2k\epsilon}\left[\lambda_h^{kr}h^{\frac{k}{2}} + \lambda_h^k h^{\frac{k-1}{r}} \gamma_n(h)^{k(r-1)} + \lambda_h^k h^{\frac{k}{2r}} \gamma_n(h)^{k(r-1)}\right] \\
&\le C_5n^{2k\epsilon}(\ln n)^{\frac{k}{2}}\cdot h^{\frac{k}{2r}}\gamma_n(h)^{kr}, 
\ee
where $C_4, C_5$ are constants depending on $(r,k,c_1,\sigma,C_1,C_3,C^*)$. Consequently, we have shown that for any $h\le h^*(s)$, 
\be
\bP(|T_h - T_{h^*(s)}|> \gamma_n(h)) \le 4^{kr}C_5n^{2k\epsilon}(\ln n)^{\frac{k}{2}}\cdot h^{\frac{k}{2r}}. \label{eqn:lr_odd_lepski_fhsmall_final}
\ee

Next consider the case when $\|f_h\|_r>\frac{C_1}{\sqrt{nh\ln n}}$. Note that
\begin{align*}
|\|f_{h^*(s)}\|_r-\|f_h\|_r|\leq \|f_{h^*(s)} - f\|_r + \|f_h-f\|_r \le 2C_1(h^*(s))^r, 
\end{align*}
thus if $C^*$ is large enough such that $\gamma_n(h)\ge 6C_1(h^*(s))^r$, triangle inequality yields
\be
\ &\P\left(|T_h-T_{h^*(s)}|>\gamma_n(h)\right)\\
&\leq \P\left(|T_h-\|f_h\|_r|>\frac{\gamma_n(h)}{3}\right) +\P\left(|T_{h^*(s)}-\|f_{h^*(s)}\|_r|>\frac{\gamma_n(h)}{3}\right). \label{eqn:lr_odd_lepski_error_decomp_fhlarge}
\ee
In this case, once again using $|a^r-b^r|\ge b^{r-1}|a-b|$ for any $a,b\ge 0$ and $r\ge 1$,
\be
|T_h-\|f_h\|_r| &\le \frac{|T_h^r-\|f_h\|_r^r|}{\|f_h\|_r^{r-1}}\leq \frac{|\int_{0}^1 T_h(x)dx-\|f_h\|_r^r|}{\|f_h\|_r^{r-1}}, \label{eqn:lr_odd_fh_large}
\ee
where the last inequality follows from $0\le \|f_h\|_r\le L$.
%
Fix any $h\leq  h^*(s)$, \eqref{eqn:lr_odd_fh_large} yields
\be 
\ & \P\left(|T_h-\|f_h\|_r|>\frac{\gamma_n(h)}{3}\right)\\
&\le \P\left(\left|\int_{0}^1 T_h(x)dx-\|f_h\|_r^r\right|>\frac{\gamma_n(h)}{3}\|f_h\|_r^{r-1}\right)\\
&\leq \P\left(\left|\int_{0}^1 T_h(x)dx-\E\left(\int_{0}^1 T_h(x)dx\right)\right|> \frac{\gamma_n(h)}{3}\|f_h\|_r^{r-1}-C_2(nh\ln n)^{-\frac{r}{2}}\right), 
\ee
where in the last line we have used \eqref{eqn:lr_odd_phih_bias_var} again. Choosing $C^*$ large enough so that
\be
\frac{\gamma_n(h)}{3}\|f_h\|_r^{r-1}-C_2(nh\ln n)^{-\frac{r}{2}} \ge \frac{\gamma_n(h)}{6}\|f_h\|_r^{r-1}, 
\ee 
which is possible due to the assumption $\|f_h\|_r\ge \frac{C_1}{\sqrt{nh\ln n}}$, and noting that
\begin{align*}
\|f\|_r \le \|f_h\|_r + \|f-f_h\|_r \le \frac{C_1}{\sqrt{nh\ln n}} + C_1h^s \le 2\|f_h\|_r, 
\end{align*}
Corollary \ref{cor:lr_odd_phih_moments} yields that for integer $k\ge 2$, 
\be
\ & \P\left(|T_h-\|f_h\|_r|>\frac{\gamma_n(h)}{3}\right)\\
&\le \left(\frac{6}{\gamma_n(h)\|f_h\|_r^{r-1}}\right)^k\cdot \bE \left|\int_{0}^1 T_h(x)dx-\E\left(\int_{0}^1 T_h(x)dx\right)\right|^k \\
&\le \frac{C_6}{\gamma_n(h)^k \|f_h\|_r^{k(r-1)}}\cdot n^{2k\epsilon}\left(\lambda_h^{kr}h^{\frac{k}{2}} + \lambda_h^k h^{\frac{k}{2r}} \|f_h\|^{k(r-1)}\right) \\
&\le C_7n^{2k\epsilon}(\ln n)^{\frac{k}{2}} \cdot \left( h^{\frac{k}{2}}\left(\frac{\lambda_h}{\|f_h\|_r}\right)^{k(r-1)} +  h^{\frac{k}{2r}}\right) \\
&\le C_8n^{2k\epsilon}(\ln n)^{\frac{kr}{2}}\cdot h^{\frac{k}{2r}}, \label{eqn:lr_odd_lepski_fhlarge_final}
\ee
where $C_6, C_7, C_8$ are constants depending on $(r,k,c_1,\sigma,K_M,C_1,C^*)$, and the last step follows from our assumption that $\|f_h\|_r \ge \frac{C_1}{\sqrt{nh\ln n}}$. 

Combining \eqref{eqn:lr_odd_lepski_fhsmall_final}, \eqref{eqn:lr_odd_lepski_error_decomp_fhlarge} and \eqref{eqn:lr_odd_lepski_fhlarge_final} we get that for any $h\leq h^*(s)$, $r\leq p$, integer $k\geq2$, 
\be 
\sup_{f\in \cB_{p,\infty}^{s}(L)}\P\left(|T_h-T_{h^*(s)}|>\gamma_n(h)\right) \le \max\{C_5,C_8\}4^{kr}n^{2k\epsilon}(\ln n)^{\frac{kr}{2}}\cdot (h^{\frac{k}{2r}} + h^*(s)^{\frac{k}{2r}}), \label{eqn:lr_odd_lepski_errorprob_final}
\ee
given that the constant $C^*$ is large enough. Note that $h\le h^*(s)\le (n\ln n)^{-\frac{1}{2s_{\max} + 1}}$, the inequality \eqref{eqn:lr_odd_lepski_errorprob_final} yields that as long as $4r\epsilon(2s_{\max}+1)<1$, choosing $k$ large enough will give
\be 
\sup_{f\in \cB_{p,\infty}^{s}(L)}\P\left(|T_h-T_{h^*(s)}|>\gamma_n(h)\right)&\leq \frac{C_9}{n^3}.\label{eqn:lr_odd_lepski_errorprob_proved}
\ee

Plugging the previous tail bound into \eqref{eqn:lr_even_lepski_term_II_prob}, we arrive at
\be
\text{II} \le \frac{C_9L^2|\mathcal{H}|}{n^3} \le \frac{C_9L^2 h_{\min}^{-1}}{n^3} = C_9L^2 \cdot \frac{\ln n}{n^2}\le C_{10}(n\ln n)^{-\frac{2s}{2s+1}}, 
\ee
and thereby complete the proof. \qed
\section{Technical Lemmas}\label{section:technical_lemmas}
In this section we collect some necessary technical lemmas necessary to prove the main results of this paper. We begin with a collection of lemmas available in literature which will serve as necessary tools to prove the other technical lemmas involved in the arguments laid down in Section \ref{section:main_proofs}.

\begin{lemma}\cite{cai2011testing}\label{lemma:hermite}
	For Hermite polynomial $H_k(x)$ of order $k$, if $X\sim\cN(\mu,1)$, we have
	\be
	\bE[H_k(X)] = \mu^k.
	\ee
	Moreover, if $|\mu|\le M$ and $k\le M^2$, we have
	\be
	\bE[H_k^2(X)] \le (2M^2)^k.
	\ee
\end{lemma}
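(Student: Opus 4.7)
The plan is to handle both assertions with a single tool, the generating-function identity
\[
\exp\!\left(tx - \tfrac{t^2}{2}\right) = \sum_{k=0}^{\infty} \frac{t^k}{k!}\,H_k(x),
\]
which I would first derive from the Rodrigues-type definition \eqref{eqn:hermite} by Taylor-expanding $e^{-(x-t)^2/2}=e^{-x^2/2}\,e^{tx-t^2/2}$ in $t$ about $0$ and comparing with the definition of $H_k$.

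For the first statement, I would take expectations on both sides for $X\sim\cN(\mu,1)$. The moment generating function of $X$ gives $\bE[\exp(tX-t^2/2)] = e^{-t^2/2}\cdot e^{t\mu+t^2/2} = e^{t\mu} = \sum_{k\ge 0} \mu^k t^k/k!$, and matching the coefficient of $t^k$ against $\sum_k (t^k/k!)\bE[H_k(X)]$ yields $\bE[H_k(X)] = \mu^k$. Exchange of sum and expectation is justified by the exponential moment of $|X|$.

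For the second statement, I would first establish the shift identity
\[
H_k(\mu+z) \;=\; \sum_{j=0}^{k}\binom{k}{j}\mu^{k-j}H_j(z),
\]
obtained by writing $e^{t(\mu+z)-t^2/2} = e^{t\mu}\cdot e^{tz-t^2/2}$, multiplying the two power series, and reading off the coefficient of $t^k/k!$. Writing $X=\mu+Z$ with $Z\sim\cN(0,1)$ and invoking the classical orthogonality $\bE[H_j(Z)H_{j'}(Z)] = j!\,\mathbbm{1}\{j=j'\}$ (which follows from integration by parts against the standard Gaussian density, using the Rodrigues formula to move derivatives off $e^{-z^2/2}$), I obtain
\[
\bE[H_k(X)^2] \;=\; \sum_{j=0}^{k}\binom{k}{j}^{2}\,j!\,\mu^{2(k-j)}.
\]

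Finally, to produce the advertised bound $(2M^2)^k$ I would rewrite $\binom{k}{j}^2 j! = \binom{k}{j}\cdot \tfrac{k!}{(k-j)!} \le \binom{k}{j}\cdot k^j$ and then use the hypotheses $|\mu|\le M$ and $k\le M^2$ to upgrade $k^j\le M^{2j}$, so that the binomial theorem yields
\[
\bE[H_k(X)^2] \;\le\; \sum_{j=0}^{k}\binom{k}{j} M^{2j} M^{2(k-j)} \;=\; (2M^2)^k.
\]
I do not expect a real obstacle here; the only mildly delicate point is avoiding the crude bound $\sum_j\binom{k}{j}^2 = \binom{2k}{k}\le 4^k$, which would give the weaker constant $(4M^2)^k$ — the assumption $k\le M^2$ is what lets one absorb the factorial $j!$ into the $M^{2j}$ factor and recover the tighter $2^k$.
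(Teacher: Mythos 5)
Your proof is correct, and it is the standard argument — the same route taken in Cai and Low's paper, which this lemma simply cites. The generating function gives the first identity and the shift formula, orthogonality yields $\bE[H_k^2(X)]=\sum_{j=0}^k\binom{k}{j}^2 j!\,\mu^{2(k-j)}$, and your observation that $\binom{k}{j}^2 j!=\binom{k}{j}\frac{k!}{(k-j)!}\le\binom{k}{j}k^j\le\binom{k}{j}M^{2j}$ is exactly the step that exploits $k\le M^2$ to produce $(2M^2)^k$ rather than the cruder $(4M^2)^k$.
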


\begin{lemma}\cite{bernstein1912ordre,varga1987conjecture}\label{lemma:approx_error}
	For any $r>0$, the best polynomial approximation error of $|x|^r$ on $[-1,1]$ satisfies
	\be
	\inf_{Q\in\mathsf{Poly}_n} \sup_{x\in [-1,1]} |Q(x)-|x|^r|\le \frac{\beta_r}{n^r}, 
	\ee
	where $\beta_r>0$ is a universal constant depending on $r$ only. Moreover, for $n$ large enough, we can choose $\beta_1$ to be the Bernstein constant $\beta_*=0.280169499$.
\end{lemma}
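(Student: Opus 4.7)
The cleanest route is via the Chebyshev expansion of $|x|^r$, turning a univariate approximation question into a Fourier-decay question on the circle. Setting $x=\cos\theta$, the function $g(\theta):=|\cos\theta|^r$ is $\pi$-periodic, even, and symmetric about $\theta=\pi/2$, so the only singular behavior lives at $\theta\equiv\pi/2\pmod{\pi}$ where $g$ has a cusp of the form $|\theta-\pi/2|^r$ (when $r$ is not an even integer; for even integer $r$, $g$ is smooth and the result is trivial). The plan is to compute the Chebyshev coefficients $a_k=\frac{2}{\pi}\int_0^\pi g(\theta)\cos(k\theta)\,d\theta$ (with the usual $1/\pi$ for $k=0$) and then take the truncated Chebyshev series as the candidate approximating polynomial.

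The main calculation is a sharp bound $|a_k|\lesssim k^{-r-1}$. I would execute this in two steps. First, by the symmetry $g(\pi-\theta)=g(\theta)$ the odd-indexed coefficients vanish, so only $a_{2m}$ matter. Second, for $a_{2m}$ I would use the explicit Beta-function identity
\begin{equation*}
\int_0^{\pi/2}(\cos\theta)^r\cos(2m\theta)\,d\theta=\frac{\pi\,\Gamma(r+1)}{2^{r+1}\,\Gamma(1+\tfrac{r}{2}+m)\,\Gamma(1+\tfrac{r}{2}-m)}
\end{equation*}
(obtainable by expanding $\cos^r$ via the binomial/Beta representation, or by contour methods), and then apply Stirling's formula to the Gamma ratio to get $|a_{2m}|=\Theta(m^{-r-1})$, with an explicit prefactor depending only on $r$. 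Summing the tail gives
\begin{equation*}
\sup_{x\in[-1,1]}\Bigl|\,|x|^r-\sum_{k=0}^{n}a_kT_k(x)\,\Bigr|\le\sum_{k>n}|a_k|\le\frac{\beta_r}{n^r},
\end{equation*}
which is exactly the claimed bound, with $\beta_r$ explicitly extractable from the Gamma/Stirling estimate.

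The second part of the lemma, identifying $\beta_1$ with the Bernstein constant $\beta_*=0.280169\ldots$ for $r=1$ and large $n$, is much finer: it requires sharp asymptotics, not just an upper bound on the best-approximation error. For this I would not try to be self-contained; rather, I would invoke Bernstein's original asymptotic result that $nE_n(|x|;[-1,1])\to\beta_*$, whose proof uses a delicate analysis of the equioscillation characterization and an integral representation of the Chebyshev-like extremal polynomial near the cusp at $0$ (Varga--Carpenter later gave a high-precision computation of $\beta_*$). The main obstacle in a fully self-contained treatment would be this second part: the upper-bound estimate $\beta_r=O_r(1)$ is a routine Chebyshev/Stirling exercise, but pinning down the sharp asymptotic constant $\beta_*$ requires the Bernstein equioscillation machinery and careful local analysis at the singular point, which is why the statement simply cites the classical references.
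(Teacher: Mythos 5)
The paper does not actually prove this lemma---it is imported verbatim from the classical literature (Bernstein's 1912 theorem for the rate and the Bernstein--Varga--Carpenter constant for the sharp value), so there is no internal argument to compare against; your proposal supplies a proof where the paper supplies a citation. Your Chebyshev route for the first claim is sound: the symmetry argument killing the odd coefficients is right, the Beta/Gamma identity for $\int_0^{\pi/2}(\cos\theta)^r\cos(2m\theta)\,d\theta$ is the standard one (Gradshteyn--Ryzhik 3.631.9), and absolute summability of the coefficients plus $|T_k|\le 1$ legitimately converts the tail bound $\sum_{k>n}|a_k|\lesssim_r n^{-r}$ into a uniform approximation bound. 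One technical wrinkle you should make explicit: for $m>1+r/2$ the factor $\Gamma(1+\tfrac r2-m)$ has negative argument, so you cannot ``apply Stirling to the Gamma ratio'' directly; you first need the reflection formula $\Gamma(1+\tfrac r2-m)\Gamma(m-\tfrac r2)=\pi/\sin(\pi(1+\tfrac r2-m))$, which both produces the $\sin(\pi r/2)$ factor (vanishing exactly when $r$ is an even integer, consistent with the trivial case) and reduces the estimate to a ratio of Gammas with large positive arguments, where Stirling gives $|a_{2m}|=\Theta(m^{-r-1})$. For the second claim, deferring to Bernstein's asymptotics is exactly what the paper does; note only that ``$\beta_1=\beta_*$ for $n$ large enough'' uses slightly more than the existence of the limit $nE_n(|x|)\to\beta_*$ (one wants, e.g., the known monotone convergence of $2nE_{2n}(|x|)$ up to $\beta_*$, as in Varga--Carpenter), which is again covered by the cited references. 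Net effect: your approach is more self-contained and yields an explicit $\beta_r$, at the cost of the Gamma-function bookkeeping, whereas the paper simply outsources the whole statement.
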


\begin{lemma}\cite{Boucheron--Lugosi--Massart2013}\label{lemma:gauss_tail}
	For $X\sim \cN(\mu,\sigma^2)$, we have
	\be
	\bP(|X-\mu|\ge t\sigma) \le \exp(-\frac{t^2}{2}). 
	\ee	
\end{lemma}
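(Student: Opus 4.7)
The plan is to prove the Gaussian tail bound by first reducing to the standard normal case and then using a neat tensorization trick to shave off the usual factor of $2$ that a naive Chernoff bound would produce. First, set $Z=(X-\mu)/\sigma\sim\cN(0,1)$, so the claim becomes $\bP(|Z|\ge t)\le e^{-t^2/2}$ for all $t\ge 0$. The boundary case $t=0$ is trivial: both sides equal $1$. For $t>0$, the difficulty is that a direct one-sided Chernoff bound gives $\bP(Z\ge t)\le e^{-t^2/2}$, and symmetry would then only yield $\bP(|Z|\ge t)\le 2e^{-t^2/2}$, which is weaker than what is claimed.

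To kill the factor of $2$, I would tensorize. Let $Z_1,Z_2$ be i.i.d.\ $\cN(0,1)$. By independence,
\be
\bP(|Z|\ge t)^2 \;=\; \bP(|Z_1|\ge t,\,|Z_2|\ge t) \;\le\; \bP(Z_1^2+Z_2^2\ge 2t^2),
\ee
since the event $\{|Z_1|\ge t,|Z_2|\ge t\}$ forces $Z_1^2+Z_2^2\ge 2t^2$. The random variable $Z_1^2+Z_2^2$ is $\chi_2^2$, which is the same as an exponential with rate $1/2$; in particular $\bP(Z_1^2+Z_2^2\ge u)=e^{-u/2}$ for all $u\ge 0$. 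Substituting $u=2t^2$ gives $\bP(Z_1^2+Z_2^2\ge 2t^2)=e^{-t^2}$. Taking square roots produces exactly $\bP(|Z|\ge t)\le e^{-t^2/2}$, which after undoing the normalization $Z=(X-\mu)/\sigma$ yields the statement.

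I do not expect any serious obstacle here: the whole proof is three lines once one sees the tensorization trick, and the only facts used are independence, rotational invariance of the Gaussian (implicit in the exponential law of $\chi_2^2$), and the elementary identity $\bP(\mathrm{Exp}(1/2)\ge u)=e^{-u/2}$. An alternative route I would keep in reserve is a hazard-rate / Mills'-ratio argument: the bound $1-\Phi(t)\le \phi(t)/t$ for $t>0$ together with integration of the differential inequality $-(\log(1-\Phi))'(t)\ge t$ from $0$ yields $\bP(Z\ge t)\le \tfrac{1}{2}e^{-t^2/2}$, and the factor of $2$ from symmetry then gives exactly $\bP(|Z|\ge t)\le e^{-t^2/2}$. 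Either route is clean, but the tensorization proof is shorter and more transparent, so I would present that one.
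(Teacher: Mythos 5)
The paper cites this tail bound from Boucheron--Lugosi--Massart (2013) without giving a proof, so there is no in-paper argument to compare against; the lemma is simply invoked as a known fact. Your tensorization proof is correct and self-contained: by independence $\bP(|Z|\ge t)^2=\bP(|Z_1|\ge t,|Z_2|\ge t)$, the event forces $Z_1^2+Z_2^2\ge 2t^2$, and the exact exponential survival function of $\chi_2^2$ gives $\bP(Z_1^2+Z_2^2\ge 2t^2)=e^{-t^2}$, so a square root yields the claim. You also correctly flag the genuine subtlety here — a one-sided Chernoff bound plus a union bound only gives $2e^{-t^2/2}$, so the extra factor of $2$ must be removed by a real argument — and both routes you offer (tensorization, or the Mills'-ratio hazard-rate inequality $\phi/(1-\Phi)\ge t$ integrated from $0$ to give $1-\Phi(t)\le\tfrac12 e^{-t^2/2}$) are valid ways to do it. The only trivial remark is that the inequality is meaningful only for $t\ge 0$ (it fails as stated for $t<0$), which is implicit in the paper's usage and in your appeal to the $\chi_2^2$ tail; worth saying explicitly.
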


\begin{lemma}\label{lemma:gauss_monomials}
	Let $X\sim\mathcal{N}(\mu,\sigma^2)$ with $\mu>c\sigma\sqrt{\ln n}$, where $c>0, n\ge 2$. Then for any $\alpha\in\bR$ and any integer $k\ge 2$, we have
	\be
	&\bE\left|\mathbbm{1}(X\ge \frac{c\sigma\sqrt{\ln n}}{2})\cdot X^\alpha\right| \le D_\alpha\mu^\alpha,\\
	&\bE\left|\mathbbm{1}(X\ge \frac{c\sigma\sqrt{\ln n}}{2})\cdot X^\alpha - \bE \left(\mathbbm{1}(X\ge \frac{c\sigma\sqrt{\ln n}}{2})\cdot X^\alpha \right)\right|^k \le D_{\alpha,k}\sigma^k\mu^{(\alpha-1)k},\nonumber
	\ee
	where $D_\alpha, D_{\alpha,k}>0$ are universal constants depending on $\alpha,k$ and $c$ only.
\end{lemma}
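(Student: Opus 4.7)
The plan is to reduce everything to properties of a standard Gaussian $Z \sim \mathcal{N}(0,1)$ via the change of variable $X = \mu + \sigma Z$, and to split the probability space into a high-probability ``typical'' event $A = \{|Z| \le \mu/(2\sigma)\}$ and its rare complement. The key observation is that since $\mu > c\sigma\sqrt{\ln n}$, Lemma \ref{lemma:gauss_tail} gives $\bP(A^c) \le 2\exp(-\mu^2/(8\sigma^2)) \le 2 n^{-c^2/8}$, and on $A$ we have $X \in [\mu/2, 3\mu/2]$, so in particular $X \ge \mu/2 > c\sigma\sqrt{\ln n}/2$ and the truncation indicator is identically one.

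For the first bound, I would treat $A$ and $A^c$ separately. On $A$, we have $X \asymp \mu$ uniformly, hence $X^\alpha \le C_\alpha \mu^\alpha$ for every real $\alpha$. On $A^c \cap \{X \ge c\sigma\sqrt{\ln n}/2\}$, I would bound $|X|^\alpha$ crudely: for $\alpha \ge 0$ by $C_\alpha(\mu^\alpha + \sigma^\alpha|Z|^\alpha)$, and for $\alpha < 0$ by $(c\sigma\sqrt{\ln n}/2)^\alpha$. Cauchy--Schwarz against the tail probability $n^{-c^2/8}$ then absorbs all $(\mu/\sigma)$ and $\sqrt{\ln n}$ factors (one can parametrize $\mu/\sigma = \lambda\sqrt{\ln n}$ with $\lambda \ge c$ and observe that $\lambda^{|\alpha|} n^{-\lambda^2/16}$ is uniformly bounded in $\lambda \ge c$ for $n \ge 2$), producing the claimed $D_\alpha \mu^\alpha$ bound.

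For the second bound, I would first replace the mean by the constant $\mu^\alpha$, using $\bE|Y - \bE Y|^k \le 2^k \bE|Y - \mu^\alpha|^k$, and then split on $A$ versus $A^c$ as before. On $A$, the mean value theorem applied to $t \mapsto t^\alpha$ on $[\mu/2, 3\mu/2]$ gives $|X^\alpha - \mu^\alpha| \le C_\alpha \mu^{\alpha-1}\sigma|Z|$, so that $\bE[|X^\alpha - \mu^\alpha|^k \ind_A] \le C_{\alpha,k}\mu^{(\alpha-1)k}\sigma^k \bE|Z|^k$, yielding the first term $\sigma^k \mu^{(\alpha-1)k}$. On $A^c$, I would apply Cauchy--Schwarz once more: the tail probability factor $n^{-c^2/16}$ dominates, and on this event the worst-case values of $|X^\alpha|^k$ and of $\mu^{\alpha k}$ combine to yield a bound of the form $C_{\alpha,k}\sigma^{\alpha k} = C_{\alpha,k}\sigma^k \sigma^{(\alpha-1)k}$, matching the second term.

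The main obstacle is the regime $\alpha < 0$, where $X^\alpha$ can blow up as $X$ approaches zero. The truncation threshold $c\sigma\sqrt{\ln n}/2$ is precisely calibrated so that the worst-case value of $X^\alpha$ on the rare event, namely $(c\sigma\sqrt{\ln n}/2)^\alpha \asymp \sigma^\alpha(\ln n)^{\alpha/2}$, when multiplied by the Gaussian tail probability $n^{-c^2/8}$, remains bounded (by $\sigma^\alpha$ with a constant depending only on $\alpha, c$ for $n \ge 2$) after absorbing the polylogarithmic correction. Verifying that the resulting constants depend only on $\alpha, k, c$ uniformly in $n$ and in the ratio $\mu/\sigma \ge c\sqrt{\ln n}$ will require a careful, but routine, case analysis at this single pinch point.
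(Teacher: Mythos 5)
Your proof is correct and follows essentially the same route as the paper's: split into a central region where $X \asymp \mu$ (handled by the mean value theorem for $t\mapsto t^\alpha$) and Gaussian-tail events (handled by Lemma \ref{lemma:gauss_tail} plus Cauchy--Schwarz), center the $k$-th moment at $\mu^\alpha$ via $\bE|Y-\bE Y|^k \le 2^k \bE|Y-\mu^\alpha|^k$, and absorb all $\mu/\sigma$ and $\ln n$ factors through the uniform boundedness of $t^{m}e^{-t^2/8}$ over $t\ge c\sqrt{\ln n}$. The only cosmetic difference is that you put the band $[c\sigma\sqrt{\ln n}/2,\mu/2)$ into the rare event rather than into the mean-value region (the paper's $E_1$), which lets you get $\mu^{(\alpha-1)k}$ directly from the MVT and skip the paper's final step of trading $(\sigma\sqrt{\ln n})^{(\alpha-1)k}$ for $\sigma^{(\alpha-1)k}$ or $\mu^{(\alpha-1)k}$.
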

\begin{proof}
	Throughout the proof we use the asymptotic notation $\lesssim$ to denote universal constants depending only on $(\alpha,c,k)$. 
	
	For the first inequality, define
	\be
	E_1 &\triangleq \{X: \frac{c\sigma\sqrt{\ln n}}{2} \le X<\frac{\mu}{2}\},\\
	E_2 &\triangleq \{X: \frac{\mu}{2}\le X\le 2\mu\}, \quad \text{and}\\
	E_3 &\triangleq \{X: X>2\mu\}.
	\ee
	By Lemma \ref{lemma:gauss_tail} and the triangle inequality, we have
	\be
	\ &\bE \left|\mathbbm{1}(X\ge \frac{c\sigma\sqrt{\ln n}}{2})\cdot X^\alpha\right| \le \sum_{i=1}^3\bE |\mathbbm{1}(E_i)\cdot X^\alpha|\\
	&\le \max\{ ( \frac{c\sigma\sqrt{\ln n}}{2})^\alpha, (\frac{\mu}{2})^\alpha \}\cdot \bP(E_1) + \max\{ (\frac{\mu}{2})^\alpha,(2\mu)^\alpha \} + \bE|X^\alpha \mathbbm{1}(X\ge 2\mu)|\\
	&\lesssim ((\sigma\sqrt{\ln n})^\alpha + \mu^\alpha) \cdot \exp(-\frac{\mu^2}{8\sigma^2}) + \mu^\alpha + \bE|X^\alpha \mathbbm{1}(X\ge 2\mu)|. 
	\ee
	For the last term, note that $X^\alpha \le (1+2^\alpha)(X-\mu)^\alpha$ holds for any $\alpha\in \bR$ when $X\ge 2\mu$, and
	\begin{align*}
	\bE[(X-\mu)^{2\alpha} \mathbbm{1}(X-\mu\ge c\sigma)] \lesssim \sigma^{2\alpha}
	\end{align*}
	by the scaling property of the Gaussian random variable $X-\mu\sim\mathcal{N}(0,\sigma^2)$. Hence, by Lemma \ref{lemma:gauss_tail} and Cauchy--Schwartz, 
	\begin{align*}
	\bE|X^\alpha \mathbbm{1}(X\ge 2\mu)| &\lesssim \bE|(X-\mu)^\alpha \mathbbm{1}(X\ge 2\mu)|\\
	&\le \bE^{1/2}|X^{2\alpha}\mathbbm{1}(X\ge 2\mu)| \cdot \bP^{1/2}(X\ge 2\mu) \\
	&\le \bE^{1/2}|(X-\mu)^{2\alpha} \mathbbm{1}(X-\mu\ge c\sigma)| \exp(-\frac{\mu^2}{4\sigma^2}) \lesssim \sigma^\alpha \exp(-\frac{\mu^2}{4\sigma^2}). 
	\end{align*}
	Combining the previous inequalities, we arrive at
	\be
	\bE \left|\mathbbm{1}(X\ge \frac{c\sigma\sqrt{\ln n}}{2})\cdot X^\alpha\right|  \lesssim (\sigma^\alpha + (\sigma\sqrt{\ln n})^\alpha + \mu^\alpha) \cdot \exp(-\frac{\mu^2}{8\sigma^2}) + \mu^\alpha. \label{eq.alpha_noncentral}
	\ee
	If $\alpha\ge 0$, the desired inequality follows from $\sigma^\alpha\lesssim (\sigma\sqrt{\ln n})^\alpha \lesssim \mu^\alpha$ and $\exp(-\frac{\mu^2}{8\sigma^2})\\ \le 1$. For $\alpha<0$, the facts 
	\be
	(\sigma\sqrt{\ln n})^\alpha \lesssim \sigma^\alpha, \quad \exp(-\frac{\mu^2}{8\sigma^2})\le 1, \quad \frac{\sigma^\alpha}{\mu^\alpha}\exp(-\frac{\mu^2}{8\sigma^2}) \le \max_{t\ge c\sqrt{\ln n}} t^{-\alpha}e^{-t^2/8} \lesssim 1
	\ee
	complete the proof of the desired inequality. 
	
	As for the second inequality, we first show the following chain of inequalities: 
	\be
	\ &\bE \left|\mathbbm{1}(X\ge \frac{c\sigma\sqrt{\ln n}}{2})\cdot X^\alpha - \mu^\alpha\right|^k\\
	&\lesssim \bE \left|\mathbbm{1}(X\ge \frac{c\sigma\sqrt{\ln n}}{2})\cdot (X^\alpha - \mu^\alpha)\right|^k + \mu^{k\alpha}\bP(X<\frac{c\sigma\sqrt{\ln n}}{2}) \label{eq.triangle}\\
	&\lesssim \sum_{i=1}^3 \bE[\mathbbm{1}(E_i)\cdot |X^\alpha-\mu^\alpha|^k] + \mu^{k\alpha}\exp(-\frac{\mu^2}{8\sigma^2})\label{eq.tail} \\\label{eq.pf_decay}
	&\lesssim \bE[\mathbbm{1}(E_2)\cdot |X^\alpha-\mu^\alpha|^k] + (\mu^{k\alpha} + (\sigma\sqrt{\ln n})^{k\alpha})\exp(-\frac{\mu^2}{8\sigma^2})\\\label{eq.pf_exp}
	&\lesssim \bE[\mathbbm{1}(E_2)\cdot |X^\alpha-\mu^\alpha|^k] + \sigma^k\mu^{(\alpha-1)k}\\
	\label{eq.pf_taylor}
	&\lesssim \sup_{\xi\in [\mu/2, 2\mu]}|\xi|^{k(\alpha-1)} \bE[\mathbbm{1}(E_2)\cdot |X-\mu|^k]  + \sigma^k\mu^{(\alpha-1)k} \\
	&\lesssim \sigma^k\mu^{(\alpha-1)k}\label{eq.final}. 
	\ee
	We elaborate on the inequalities \eqref{eq.triangle} to \eqref{eq.final} here: 
	\begin{enumerate}
		\item Inequality \eqref{eq.triangle} follows from the triangle inequality $|a+b|^k\le 2^{k-1}(|a|^k + |b|^k)$; 
		\item Inequality \eqref{eq.tail} follows from Lemma \ref{lemma:gauss_tail} and $c\sigma\sqrt{\ln n} \le \mu$; 
		\item Inequality \eqref{eq.pf_decay} follows from dealing with events $E_1$ and $E_3$ separately. For all $\alpha\in \bR$, the condition $c\sigma\sqrt{\ln n}/2\le X\le \mu/2$ implies that $|X^\alpha-\mu^\alpha|\lesssim (\sigma\sqrt{\ln n})^\alpha + \mu^\alpha$. Hence, by Lemma \ref{lemma:gauss_tail}, 
		\begin{align*}
		\E[\mathbbm{1}(E_1)\cdot |X^\alpha - \mu^\alpha|^k] &\lesssim \bP(E_1)\cdot ((\sigma\sqrt{\ln n})^\alpha + \mu^\alpha)^k \\
		&\lesssim (\mu^{k\alpha} + (\sigma\sqrt{\ln n})^{k\alpha})\exp(-\frac{\mu^2}{8\sigma^2}). 
		\end{align*}
		As for the event $E_3$, if $\alpha\le 0$ we have $|X^\alpha-\mu^\alpha|\le \mu^\alpha$ when $X\ge 2\mu$, and Lemma \ref{lemma:gauss_tail} gives
		\begin{align*}
		\E[\mathbbm{1}(E_3)\cdot |X^\alpha - \mu^\alpha|^k] \le \mu^{k\alpha}\P(E_3)\le \mu^{k\alpha}\exp(-\frac{\mu^2}{2\sigma^2}). 
		\end{align*}
		If $\alpha>0$, the condition $X\ge 2\mu$ ensures that
		\begin{align*}
		|X^\alpha - \mu^\alpha| \le X^\alpha \le 2^\alpha(X-\mu)^\alpha,
		\end{align*}
		and thus Lemma \ref{lemma:gauss_tail} with Cauchy--Schwartz yields
		\begin{align*}
		\bE [\mathbbm{1}(E_3)|X^\alpha - \mu^\alpha|^k] &\lesssim \bE[\mathbbm{1}(E_3)(X-\mu)^{k\alpha}] \\
		&\le \bP^{1/2}(E_3)\cdot \bE^{1/2}[(X-\mu)^{2k\alpha}] \\
		&\lesssim \sigma^{k\alpha}\exp(-\frac{\mu^2}{4\sigma^2}) \lesssim \mu^{k\alpha}\exp(-\frac{\mu^2}{4\sigma^2}),
		\end{align*}
		where the last step is due to the assumption $\sigma\lesssim \mu$; 
		\item If $\alpha\ge 0$, inequality \eqref{eq.pf_exp} follows from $(\sigma\sqrt{\ln n})^{k\alpha}\lesssim \mu^{k\alpha}$ and
		\be
		\frac{\mu^k}{\sigma^k}\cdot\exp(-\frac{\mu^2}{8\sigma^2}) \le \max_{t\ge c\sqrt{\ln n}} t^ke^{-t^2/8} \lesssim 1
		\ee
		for any integer $k\ge 2$. If $\alpha<0$, the desired inequality follows from 
		\be \mu^{k\alpha} + (\sigma\sqrt{\ln n})^{k\alpha} \lesssim \sigma^{k\alpha} \ee
		and
		\be
		\frac{\mu^{k(1-\alpha)}}{\sigma^{k(1-\alpha)}}\cdot\exp(-\frac{\mu^2}{8\sigma^2}) \le \max_{t\ge c\sqrt{\ln n}} t^{k(1-\alpha)}e^{-t^2/8} \lesssim 1
		\ee
		for any integer $k\ge 2$; 
		\item Inequality \eqref{eq.pf_taylor} follows from $X^\alpha - \mu^\alpha = \alpha\xi^{\alpha-1}(X-\mu)$ with some $\xi$ lying between $X$ and $\mu$, as well as the definition of $E_2$; 
		\item Inequality \eqref{eq.final} follows from $\bE|X-\mu|^k \lesssim \sigma^k$ for $X\sim \mathcal{N}(\mu,\sigma^2)$.
	\end{enumerate}
	
	Now observe that for any random variable $Y$ and integer $k\ge 2$, by the triangle inequality and Jensen's inequality, the following inequality
	\be
	\bE|Y-\bE Y|^k &\le 2^{k-1} \left(\bE|Y-c|^k + |\bE Y-c|^k\right) \\
	&\le 2^{k-1} \left(\bE|Y-c|^k + \bE|Y-c|^k\right) \\
	&= 2^k \bE|Y-c|^k
	\ee
	holds for any $c\in \mathbb{R}$. Hence, a combination of the previous two inequalities concludes the proof of the upper bound on $k$-th central moment.
\end{proof}
\begin{lemma}\cite[Thm. E]{Qazi--Rahman2007}\label{lemma:chebyshev}
	Let $p_n(x) = \sum_{\nu=0}^n a_\nu x^\nu$ be a polynomial of degree at most $n$ such that $|p_n(x)|\leq 1$ for $x\in [-1,1]$. Then, $|a_{n-2\mu}|$ is bounded above by the modulus of the corresponding coefficient of $T_n$ for $\mu = 0,1,\ldots,\lfloor n/2 \rfloor$, and $|a_{n-1-2\mu}|$ is bounded above by the modulus of the corresponding coefficient of $T_{n-1}$ for $\mu = 0,1,\ldots,\lfloor (n-1)/2 \rfloor$. Here $T_n(x)$ is the $n$-th Chebyshev polynomials of the first kind. 
\end{lemma}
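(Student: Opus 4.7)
Proof plan: The statement asserts that the Chebyshev polynomials $T_n$ and $T_{n-1}$ simultaneously extremize every individual monomial coefficient over the class of polynomials of degree $\le n$ bounded by $1$ on $[-1,1]$. My plan is to carry this out in three steps: a parity decomposition of $p_n$, a Chebyshev-basis expansion within each parity class, and an extremality argument at the level of individual coefficients.

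First, I would write $p_n = p_n^+ + p_n^-$ with $p_n^\pm(x) = (p_n(x) \pm p_n(-x))/2$; both parts are bounded by $1$ on $[-1,1]$ by the triangle inequality. The coefficients $a_{n-2\mu}$ have the same parity as $n$ and appear only in the part whose parity matches that of $n$, whereas the coefficients $a_{n-1-2\mu}$ have the opposite parity and appear only in the other part. The matching-parity part has degree $n$, while the other has degree $n-1$, which explains the two distinct Chebyshev polynomials $T_n$ and $T_{n-1}$ featured in the statement. It therefore suffices to prove an analogous extremal bound within each fixed-parity class separately.

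Second, I would use the fact that a polynomial of fixed parity admits a Chebyshev expansion involving only Chebyshev polynomials of the same parity: $q(x) = \sum_{k} c_k T_{2k}(x)$ in the even case and $q(x) = \sum_{k} c_k T_{2k+1}(x)$ in the odd case. The Chebyshev coefficients satisfy $|c_k| \le 2$ (with $|c_0| \le 1$ in the even case) by the standard orthogonality formula $c_k = (2/\pi)\int_{-1}^1 q(x) T_k(x)(1-x^2)^{-1/2}\,dx$ combined with $\|q\|_\infty \le 1$. Expanding each $T_k$ in the monomial basis then writes the target coefficient $[x^s]q$ as an explicit linear combination of the $c_k$'s, weighted by the monomial coefficients of the relevant Chebyshev polynomials.

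Third, I would conclude by invoking the classical Markov--Bernstein-type extremal property: the linear functional $q \mapsto [x^s]q$ attains its supremum over the sup-norm unit ball at a polynomial whose graph equioscillates between $\pm 1$, and by the Chebyshev alternation characterization this extremal polynomial must be $\pm T_N$, where $N$ is the degree of the parity class under consideration. The hard part will be precisely this final step: unlike the leading-coefficient bound, which follows in one line by interpolating at the Chebyshev nodes, the simultaneous extremality of $T_N$ across every intermediate coefficient slot is delicate because the naive Chebyshev-coefficient bound $|c_k| \le 2$ is too loose and the signs of $[x^s]T_{2k}$ alternate in $k$, so the argument requires the careful dual-problem analysis that forms the substance of Qazi--Rahman's Theorem E.
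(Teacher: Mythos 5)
The paper gives no proof of this lemma: it is cited verbatim from Qazi and Rahman (2007), where it appears as Theorem~E and is the classical coefficient inequality of V.~A.~Markov (1892). So there is no in-paper argument to compare against; the question is simply whether your outline would, if fleshed out, constitute a proof.

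Your first two steps are correct and are indeed the standard scaffolding. Splitting $p_n=p_n^++p_n^-$ with $p_n^\pm(x)=\tfrac12(p_n(x)\pm p_n(-x))$, observing that both parts are bounded by $1$ on $[-1,1]$, and noting that one parity class has degree $\le n$ and the other degree $\le n-1$, correctly explains why $T_n$ governs the coefficients $a_{n-2\mu}$ and $T_{n-1}$ governs $a_{n-1-2\mu}$. The Chebyshev expansion in step two with the bound $|c_k|\le 2$ is also correct, and you rightly note that this bound is too weak to conclude on its own.

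The genuine gap is in your third step, and you yourself flag it. The claim that the extremal polynomial for the functional $q\mapsto [x^s]q$ ``equioscillates'' and hence ``must be $\pm T_N$ by the Chebyshev alternation characterization'' is not correct as stated. The Chebyshev alternation theorem characterizes best uniform approximations (equivalently, the leading-coefficient extremal problem), not extremizers of arbitrary coefficient functionals. For intermediate coefficient slots the extremizer is generally not unique, need not equioscillate $N+1$ times, and need not be a Chebyshev polynomial: already for $n=2$ the functional $q\mapsto a_0=q(0)$ on the unit ball is maximized by the constant $\pm 1$, which does not equioscillate at all, and $T_2$ only happens to also attain the bound $|a_0|\le 1$. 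The actual content of V.~A.~Markov's theorem is that $T_N$ (with $N$ of matching parity) is \emph{among} the extremizers for every coefficient slot simultaneously, and establishing this requires the careful duality or alternation-count analysis that you explicitly defer to Qazi--Rahman. As written, your proposal is a correct reduction and a plausible roadmap, but the third step reformulates the theorem rather than proving it, so the proposal is not a complete proof.
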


\begin{lemma}\label{lemma:norm}
	Let $f$ and $f_h$ be defined in Section \ref{section:main_results}, and let $r\ge 1$ and $k>1$ be integers. There exists some universal constant $c$ depending only on $r,k$ and the kernel $K$ such that
	\be
	\|f_h\|_{k(r-1)}^{k(r-1)}\le ch^{(k-1)(-1+1/r)}\|f\|_r^{(k-1)(r-1)}\|f_h\|_r^{r-1}.
	\ee
\end{lemma}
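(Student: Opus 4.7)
\noindent\textbf{Proof plan for Lemma \ref{lemma:norm}.}
My plan is to split the exponent $k(r-1) = (k-1)(r-1) + (r-1)$, pull out an $L_\infty$ factor, and estimate $\|f_h\|_\infty$ via a convolution inequality. Concretely, I write
\be
\|f_h\|_{k(r-1)}^{k(r-1)} = \int_0^1 |f_h(x)|^{(k-1)(r-1)}\cdot |f_h(x)|^{r-1}\,dx \le \|f_h\|_\infty^{(k-1)(r-1)} \cdot \|f_h\|_{r-1}^{r-1}.
\ee
The first factor is the place where all the $h$-dependence and the $\|f\|_r$ factor will come from, while the second factor is what will yield $\|f_h\|_r^{r-1}$.

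For the $L_\infty$ bound, I would recognize $f_h = f \ast K_h$ with $K_h(y)=h^{-1}K_M(y/h)$ and apply Young's convolution inequality (or equivalently Hölder with conjugate exponent $r' = r/(r-1)$) to get
\be
\|f_h\|_\infty \le \|f\|_r \cdot \|K_h\|_{r'} = h^{-1/r}\|K_M\|_{r'}\cdot \|f\|_r,
\ee
where the $h^{-1/r}$ comes from the elementary scaling identity $\|K_h\|_{r'} = h^{1/r'-1}\|K_M\|_{r'} = h^{-1/r}\|K_M\|_{r'}$. Raising this to the power $(k-1)(r-1)$ produces exactly the factor $h^{(k-1)(-1+1/r)}\|f\|_r^{(k-1)(r-1)}$ demanded by the lemma, with a constant depending only on $r$, $k$, and $K_M$.

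For the remaining factor $\|f_h\|_{r-1}^{r-1}$, I would invoke monotonicity of $L_q$ norms on the finite-measure space $[0,1]$: since $r-1 \le r$, Jensen's inequality yields $\|f_h\|_{r-1} \le \|f_h\|_r$, hence $\|f_h\|_{r-1}^{r-1} \le \|f_h\|_r^{r-1}$. (The degenerate case $r=1$ reduces both sides to constants and is trivial.) Multiplying the two bounds gives the claim with $c = \|K_M\|_{r/(r-1)}^{(k-1)(r-1)}$.

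The only subtlety is the boundary region $x \in [0,h/2] \cup [1-h/2,1]$, where $f_h$ is not a full convolution; but by the convention adopted in Section~\ref{section:main_results} (following \cite{lepski1999estimation}), the same kernel bound applies up to an absolute constant, so this does not affect the estimate. There is no substantive obstacle: the proof is essentially a one-line Hölder-plus-Young computation, and the main thing to be careful about is matching the exponent on $h$, which is $-(k-1)(r-1)/r = (k-1)(-1+1/r)$ as required.
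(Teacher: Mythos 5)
The paper does not actually give a proof of this lemma: it states only that ``the proof follows from the proof of Lemma 4.4 of Lepski, Nemirovski, Spokoiny (1999)'' and omits the argument, so there is no in-paper proof to compare against. Your H\"older-plus-Young computation is correct and is the natural route for an interpolation inequality of this form: splitting $k(r-1)=(k-1)(r-1)+(r-1)$, pulling out $\|f_h\|_\infty^{(k-1)(r-1)}$, bounding $\|f_h\|_\infty \le h^{-1/r}\|K_M\|_{r/(r-1)}\|f\|_r$ by applying H\"older (with conjugate exponents $r$ and $r/(r-1)$) inside the convolution, and using monotonicity of $L^q$ norms on the probability space $[0,1]$ to replace $\|f_h\|_{r-1}^{r-1}$ by $\|f_h\|_r^{r-1}$. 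The exponent bookkeeping is right: $h^{-(k-1)(r-1)/r}=h^{(k-1)(-1+1/r)}$, and the constant $c=\|K_M\|_{r/(r-1)}^{(k-1)(r-1)}$ depends only on $r$, $k$, $K_M$, as required. The one point worth flagging is that your bound needs $K_M\in L^{r/(r-1)}$, which is not literally implied by the paper's stated condition $\int |K_M|^{M+1}<\infty$ when $r$ is close to $1$; this is harmless since $K_M$ is compactly supported and in practice bounded, but a careful write-up should say so explicitly (or simply assume $K_M\in L^\infty$, as the standard constructions satisfy).
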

\begin{proof}
	The proof follows from the proof of \cite[Lemma 4.4]{lepski1999estimation} and is hence omitted.
\end{proof}

\begin{lemma}\label{lemma:cailow1}
	Suppose $\mathbbm{1}(A)$ is an indicator random variable independent of $X$ and $Y$ and let $Z=X \mathbbm{1}(A) + Y \mathbbm{1}(A^c)$. Then
	\be
	\mathsf{Var}(Z) &= \mathsf{Var}(X) \bP(A) + \mathsf{Var}(Y) \bP(A^c)  + (\bE X - \bE Y)^2 \bP(A) \bP(A^c).
	\ee
	In general for any integer $k\geq 2$,
	\be
	\E|Z-\bE Z|^k &\le 2^{k-1}\left(\E|X-\bE X|^k\P(A) + \E|Y-\E Y|^k\P(A^c)+|\E X-\bE Y|^k\P(A)\P(A^c)\right).
	\ee
\end{lemma}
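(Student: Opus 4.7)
The plan is to reduce both identities to direct computations using the independence of $\mathbbm{1}(A)$ from $(X,Y)$, together with the key structural observation that the indicators $\mathbbm{1}(A)$ and $\mathbbm{1}(A^c)$ have disjoint support. Write $p = \mathbb{P}(A)$, $q = \mathbb{P}(A^c) = 1-p$, so that independence gives $\mathbb{E}Z = p\,\mathbb{E}X + q\,\mathbb{E}Y$ and $\mathbb{E}Z^2 = p\,\mathbb{E}X^2 + q\,\mathbb{E}Y^2$.

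For the variance identity, I would simply plug these into $\mathsf{Var}(Z) = \mathbb{E}Z^2 - (\mathbb{E}Z)^2$, add and subtract $p(\mathbb{E}X)^2 + q(\mathbb{E}Y)^2$ to pull out $p\,\mathsf{Var}(X) + q\,\mathsf{Var}(Y)$, and simplify the remaining $p(\mathbb{E}X)^2 + q(\mathbb{E}Y)^2 - (p\,\mathbb{E}X+q\,\mathbb{E}Y)^2$ using $p+q=1$; this collapses to $pq(\mathbb{E}X - \mathbb{E}Y)^2$.

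For the general $k$-th central moment, the starting point is the identity
\[
Z - \mathbb{E}Z = (X - \mathbb{E}Z)\mathbbm{1}(A) + (Y - \mathbb{E}Z)\mathbbm{1}(A^c),
\]
and since $\mathbbm{1}(A)\cdot \mathbbm{1}(A^c) = 0$, raising to the $k$-th power in absolute value gives exactly
\[
|Z-\mathbb{E}Z|^k = |X-\mathbb{E}Z|^k\mathbbm{1}(A) + |Y-\mathbb{E}Z|^k\mathbbm{1}(A^c).
\]
Taking expectations and using independence of $\mathbbm{1}(A)$ from $(X,Y)$ yields $\mathbb{E}|Z-\mathbb{E}Z|^k = p\,\mathbb{E}|X-\mathbb{E}Z|^k + q\,\mathbb{E}|Y-\mathbb{E}Z|^k$. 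Then I would decompose $X - \mathbb{E}Z = (X-\mathbb{E}X) + q(\mathbb{E}X - \mathbb{E}Y)$ and $Y - \mathbb{E}Z = (Y-\mathbb{E}Y) - p(\mathbb{E}X - \mathbb{E}Y)$, and apply the elementary convexity bound $|a+b|^k \le 2^{k-1}(|a|^k + |b|^k)$ to each of these.

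Collecting terms, the cross-term coefficient becomes $p\cdot q^k + q\cdot p^k = pq(p^{k-1} + q^{k-1})$. The only place where any care is needed is to absorb this factor: since $p,q\in[0,1]$ with $p+q=1$ and $k\ge 2$, we have $p^{k-1}+q^{k-1} \le p+q = 1$, so $pq(p^{k-1}+q^{k-1}) \le pq$. This yields the stated bound with the multiplicative constant $2^{k-1}$. There is no real obstacle here—this is a purely algebraic lemma used to split the $k$-th central moment of a two-component mixture into contributions from each component plus a mean-gap penalty, and the argument is essentially a moment-version of the variance decomposition.
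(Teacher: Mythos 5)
Your proof is correct and follows essentially the same argument as the paper: conditioning on $\mathbbm{1}(A)$ via the disjoint-support identity is the same as ``taking the expectation with respect to $\mathbbm{1}(A)$ first,'' the decompositions $X-\bE Z=(X-\bE X)+q(\bE X-\bE Y)$ and $Y-\bE Z=(Y-\bE Y)-p(\bE X-\bE Y)$ match the paper's, and the final absorption via $p^{k-1}+q^{k-1}\le 1$ is identical. The only cosmetic difference is that you prove the variance identity by direct computation, whereas the paper simply cites it from an earlier reference.
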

\begin{proof}
	The identity for $\var(Z)$ follows from \cite[Lemma 4]{cai2011testing}. For general $k$, by taking the expectation with respect to $\mathbbm{1}(A)$ first, we have
	\be
	\E|Z-\bE Z|^k &= \E |X\mathbbm{1}(A)+ Y\mathbbm{1}(A^c) - \E(X\mathbbm{1}(A)+ Y\mathbbm{1}(A^c))|^k \\
	&= \P(A)\E|X-\E X\cdot \P(A)-\E Y\cdot \P(A^c)|^k \\
	&\qquad + \P(A^c)\E|Y-\E X\cdot \P(A)-\E Y\cdot \P(A^c)|^k  .
	\ee
	By triangle inequality,
	\be
	\E|X-\E X\cdot \P(A)-\E Y\cdot \P(A^c)|^k &= \E|(X-\E X)+(\E X-\E Y)\P(A^c)|^k \\
	&\le 2^{k-1}\left(\E|X-\E X|^k + |\E X-\E Y|^k\P(A^c)^k\right), 
	\ee
	and thus
	\be
	\bE|Z-\bE Z|^k &\le \bP(A)\cdot 2^{k-1}(\bE|X-\bE X|^k + \bP(A^c)^k|\bE X-\bE Y|^k) \\
	&\qquad + \bP(A^c)\cdot 2^{k-1}(\bE|Y-\bE Y|^k + \bP(A)^k|\bE X-\bE Y|^k)   \\
	&\le 2^{k-1}\left(\bE|X-\bE X|^k\bP(A) + \bE|Y-\bE Y|^k\bP(A^c) + |\bE X-\bE Y|^k\bP(A)\bP(A^c)\right) ,
	\ee
	where the last step uses $\bP(A)^{k-1}+\bP(A^c)^{k-1}\le \bP(A)+\bP(A^c)=1$ for $k\ge 2$.
\end{proof}
%
%

\begin{lemma}\cite{Tsybakov2008}\label{lemma:TV_chi}
	The total variation distance and the chi-squared distance are related via the following inequality:
	\be
	V(P,Q) \le 1-\frac{1}{2}\exp(-\chi^2(P,Q)).
	\ee
\end{lemma}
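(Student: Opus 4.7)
The plan is to deduce the bound from two classical comparisons chained together: the Bretagnolle--Huber inequality $V(P,Q) \le \sqrt{1 - \exp(-KL(P,Q))}$, and the elementary domination $KL(P,Q) \le \chi^2(P,Q)$. Together with the real-variable inequality $\sqrt{1-x}\le 1-x/2$ on $[0,1]$, these three ingredients combine to give exactly the stated bound.

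To obtain Bretagnolle--Huber, I would work through the Hellinger affinity $\rho = \int \sqrt{pq}\, d\nu$. On one hand, the factorization $p-q = (\sqrt p - \sqrt q)(\sqrt p + \sqrt q)$ together with the Cauchy--Schwarz inequality gives
\[
V(P,Q) \;=\; \tfrac12 \int |p-q|\, d\nu \;\le\; \tfrac12\sqrt{\int (\sqrt p - \sqrt q)^2\, d\nu}\cdot \sqrt{\int (\sqrt p + \sqrt q)^2\, d\nu} \;=\; \sqrt{1-\rho^2},
\]
where the final equality uses $\int (\sqrt p - \sqrt q)^2 d\nu = 2(1-\rho)$ and $\int (\sqrt p + \sqrt q)^2 d\nu = 2(1+\rho)$. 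On the other hand, Jensen's inequality applied to the convex function $-\log$ yields
\[
-\log \rho \;=\; -\log \int p\,\sqrt{q/p}\, d\nu \;\le\; -\int p\,\log\sqrt{q/p}\, d\nu \;=\; \tfrac12 KL(P,Q),
\]
so $\rho^2 \ge \exp(-KL(P,Q))$, and combining the two displays gives Bretagnolle--Huber.

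To finish, I would verify $\sqrt{1-x}\le 1-x/2$ for $x\in[0,1]$ by squaring (this amounts to $1-x \le 1-x+x^2/4$), which upgrades the previous bound to $V(P,Q)\le 1 - \tfrac12 \exp(-KL(P,Q))$. The last ingredient is the pointwise estimate $\log t \le t-1$, which plugged into $KL(P,Q)=\int p\log(p/q)\, d\nu$ gives $KL(P,Q)\le \int p\cdot(p/q-1)\, d\nu = \chi^2(P,Q)$. Since $x\mapsto 1 - \tfrac12 e^{-x}$ is monotone increasing, replacing $KL$ by the larger $\chi^2$ preserves the inequality. No step here is substantively hard, and the only genuine potential pitfall is getting the direction of $\sqrt{1-x}\le 1-x/2$ right — this is the sort of estimate one can easily reverse by accident.
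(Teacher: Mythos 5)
Your proof is correct and is essentially the same chain of reasoning found in the cited reference (Tsybakov, \emph{Introduction to Nonparametric Estimation}, Section 2.4): derive the Bretagnolle--Huber-type bound $V(P,Q)\le\sqrt{1-\exp(-KL(P,Q))}$ via Hellinger affinity and Cauchy--Schwarz, upgrade it with $\sqrt{1-x}\le 1-x/2$, and then replace $KL$ by the larger $\chi^2$ using $\log t\le t-1$ and monotonicity of $x\mapsto 1-\tfrac12 e^{-x}$. The paper gives no proof of its own (it is stated as a citation), so there is nothing further to compare; every step in your argument checks out, including the direction of the squared inequality and the Jensen step (with the implicit understanding that if $P\not\ll Q$ then $\chi^2=\infty$ and the bound is vacuous).
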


\begin{lemma}\cite[Theorem C.2]{hardle2012wavelets}\label{lemma:rosenthal}
	Let $q\geq 2$ and let $X_1,\ldots,X_n$ be independent random variables such that $\E(X_i)=0$ and $\E|X_i|^q<\infty$.
	\be 
	\E\left(\left|\sum_{i=1}^nX_i\right|^q\right)&\leq C(q)\left[\sum_{i=1}^n \E|X_i|^q+\left(\sum_{i=1}^n \E|X_i|^2\right)^{q/2}\right].
	\ee
\end{lemma}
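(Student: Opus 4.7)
The plan is to establish Rosenthal's inequality via the classical three-step route: symmetrization, Khintchine's inequality, and a recursive bound on the $L_{q/2}$-norm of $\sum_i X_i^2$.

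First I would reduce to symmetric summands. Let $X_i'$ be an independent copy of $X_i$ and set $Y_i = X_i - X_i'$. Since $\E X_i' = 0$, a conditional application of Jensen's inequality yields $\E|\sum_i X_i|^q \le \E|\sum_i Y_i|^q$. Because each $Y_i$ is symmetric, the joint distribution of $(Y_i)_i$ coincides with that of $(\epsilon_i Y_i)_i$ for i.i.d.\ Rademacher signs $\epsilon_i$ independent of $(X_i, X_i')$, and the triangle inequality then trades $Y_i$ for $X_i$, producing $\E|\sum_i X_i|^q \le 2^q \E|\sum_i \epsilon_i X_i|^q$.

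Second I would invoke Khintchine's inequality conditionally on $(X_i)$: for any fixed values of $X_i$, $\E_\epsilon|\sum_i \epsilon_i X_i|^q \le K_q (\sum_i X_i^2)^{q/2}$ for a universal constant $K_q$. Taking expectation over $(X_i)$ and chaining with the previous step gives $\E|\sum_i X_i|^q \le 2^q K_q \cdot \E(\sum_i X_i^2)^{q/2}$, so the task reduces to bounding this second-moment expression in the mixed form claimed.

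Third, write $S = \sum_i X_i^2$ and $\mu = \sum_i \E X_i^2$. The triangle inequality in $L_{q/2}$ yields $\|S\|_{q/2} \le \|S - \mu\|_{q/2} + \mu$, reducing the problem to controlling $\|\sum_i (X_i^2 - \E X_i^2)\|_{q/2}$, which is again an $L_{q/2}$-norm of a sum of independent mean-zero random variables. I would then iterate the symmetrization--Khintchine step at the smaller exponent $q/2$, terminating at the base case $q=2$ where $\E S = \mu$ is exact. The centred $q/2$-th moment bound $\E|X_i^2 - \E X_i^2|^{q/2} \lesssim \E|X_i|^q$ and the H\"older interpolation $\E X_i^4 \le (\E X_i^2)^{1-2/q}(\E|X_i|^q)^{2/q}$ are what feed back through the recursion to assemble exactly the two terms $\sum_i \E|X_i|^q$ and $(\sum_i \E X_i^2)^{q/2}$ on the right-hand side after raising to the appropriate power.

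The main obstacle is this last step: making the induction on $q$ rigorous when $q/2$ is not an integer (so one cannot halve indefinitely and must instead close the recursion after finitely many iterations by invoking the base case plus a final H\"older interpolation), and bookkeeping the constants so that the intermediate moment bounds reassemble into exactly the two advertised terms rather than a strictly worse combination. An alternative avenue via the Burkholder--Davis--Gundy inequality applied to the partial-sum martingale $(\sum_{i \le k} X_i)_k$ bypasses the symmetrization but faces the same recursive obstacle in bounding $\|\sum_i X_i^2\|_{q/2}$.
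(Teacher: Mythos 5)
This lemma is Rosenthal's inequality, which the paper does not prove at all: it is imported verbatim from \cite[Theorem C.2]{hardle2012wavelets}, so there is no internal proof to compare against. Your symmetrization--Khintchine--recursion route is one of the standard textbook proofs and the overall plan is sound: the reduction $\E|\sum_i X_i|^q\le 2^qK_q\,\E(\sum_i X_i^2)^{q/2}$ is correct, and the recursion on $\|\sum_i(X_i^2-\E X_i^2)\|_{q/2}$ does close, because once the exponent $p$ has been halved into $(1,2]$ you do not need a further ``H\"older interpolation at the base case'': after one more symmetrization--Khintchine step you face $\E(\sum_i Z_i^2)^{p/2}$ with $p/2\le 1$, and subadditivity $(\sum_i a_i)^{p/2}\le\sum_i a_i^{p/2}$ (equivalently, von Bahr--Esseen at exponent $p\le 2$) finishes it, using $\E|X_i^2-\E X_i^2|^{p}\lesssim \E|X_i|^{2p}$, which you state correctly.

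Two points to fix when writing this out. First, the interpolation inequality you quote, $\E X_i^4\le(\E X_i^2)^{1-2/q}(\E|X_i|^q)^{2/q}$, is not homogeneous (test $|X_i|\equiv c>1$) and is false as written; the correct H\"older interpolation for $q\ge 4$ is $\E X_i^4\le(\E X_i^2)^{(q-4)/(q-2)}(\E|X_i|^q)^{2/(q-2)}$, and more generally $\E|X_i|^{m}\le(\E X_i^2)^{(q-m)/(q-2)}(\E|X_i|^q)^{(m-2)/(q-2)}$ for $2\le m\le q$, which is what the intermediate moments $\E X_i^4,\E X_i^8,\dots$ generated by the recursion require. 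Second, after that interpolation you still need H\"older across the index $i$, namely $\sum_i\E X_i^{m}\le(\sum_i\E X_i^2)^{(q-m)/(q-2)}(\sum_i\E|X_i|^q)^{(m-2)/(q-2)}$, followed by the weighted AM--GM inequality $A^\lambda B^{1-\lambda}\le A+B$ applied with $A=(\sum_i\E X_i^2)^{q/2}$ and $B=\sum_i\E|X_i|^q$ (the exponents do sum to one after raising to the power $q/m$), to fold the intermediate terms into exactly the two advertised ones. With those corrections your outline is a complete and valid proof, independent of (and more self-contained than) the paper's treatment, which simply cites the result.
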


Finally, Lemma \ref{lmm:Kequiv} presents the equivalence between Peetre's $K$-functional and modulus of smoothness on $\bR$. For $p\in [1,\infty]$ and $r\in\mathbb{N}$, the Peetre's $K$-functional for $f$ defined on $\bR$ is defined as
\begin{align*}
K_r(f,t^r)_p \triangleq \inf_{g} \|f-g\|_p + t^r\|g^{(r)}\|_p, 
\end{align*}
where the infimum is taken over all functions $g$ defined on $\bR$ such that the derivative $g^{(r-1)}$ is locally absolutely continuous. Also recall the definition of the modulus of smoothness $\omega_r(f,t)_p$ in \eqref{eq.moduli_of_smoothness}.
\begin{lemma}
	\label{lmm:Kequiv}	
	For any $p\in [1,\infty]$ and $ r\in\mathbb{N}$, there exist universal constants $M=M(r,p)$ and $t_0=t_0(r,p)$ such that for any $0<t<t_0$ and $f$ defined on $\bR$,
	\begin{align}
	M^{-1}K_r(f,t^r)_p \le \omega_r(f,t)_p \le MK_r(f,t^r)_p.
	\label{eq:Kequiv}
	\end{align}
	Furthermore,
	\begin{equation}
	\omega_r(f,t)_p  \le M t^r \|f^{(r)}\|_p, \quad 0<t<t_0, 
	\label{eq:Kequiv1}
	\end{equation}
	and
	\begin{equation}
	\omega_r(f,t)_p  \le 2^r \|f\|_p, \quad t>0.
	\label{eq:Kequiv2}
	\end{equation}	
\end{lemma}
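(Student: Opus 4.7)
The plan is to prove the four assertions in the order in which the easy ones feed the hard one. First, I would establish \eqref{eq:Kequiv2} directly from the definition of the symmetric difference: by the triangle inequality,
$$\|\Delta_h^r f\|_p \le \sum_{k=0}^r \binom{r}{k} \|f(\cdot + (k - r/2)h)\|_p = 2^r \|f\|_p,$$
and taking the supremum over $0 < h \le t$ gives the bound. For \eqref{eq:Kequiv1}, when $g^{(r-1)}$ is locally absolutely continuous, iterated application of the fundamental theorem of calculus yields the representation
$$\Delta_h^r g(x) = \int_{[-h/2,\, h/2]^r} g^{(r)}(x + u_1 + \cdots + u_r)\, du_1 \cdots du_r,$$
and Minkowski's integral inequality then gives $\|\Delta_h^r g\|_p \le h^r \|g^{(r)}\|_p$, so \eqref{eq:Kequiv1} follows after taking the supremum over $h \in (0, t]$.

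With these two in hand, the upper half of \eqref{eq:Kequiv} is immediate: for any $g$ admissible in the $K$-functional,
$$\omega_r(f, t)_p \le \omega_r(f - g, t)_p + \omega_r(g, t)_p \le 2^r \|f - g\|_p + M\, t^r \|g^{(r)}\|_p,$$
and taking the infimum over $g$ produces $\omega_r(f, t)_p \le M' K_r(f, t^r)_p$ for a suitable $M' = M'(r, p)$.

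The main work lies in the reverse inequality $K_r(f, t^r)_p \le M \omega_r(f, t)_p$, which requires exhibiting a concrete smoothing $g_t$ of $f$ such that $\|f - g_t\|_p + t^r \|g_t^{(r)}\|_p \le C \omega_r(f, t)_p$. I would employ the classical Steklov-type device: let $A_t$ denote the symmetric averaging operator $A_t f(x) = t^{-1} \int_{-t/2}^{t/2} f(x + u)\, du$ and set $g_t = [I - (I - A_t)^r] f$. The derivative bound is handled through the identity $\partial_x A_t f = t^{-1}(f(\cdot + t/2) - f(\cdot - t/2))$ applied $r$ times, which expresses $t^r g_t^{(r)}$ as an average of symmetric $r$-th differences of $f$ with steps bounded by $t$, giving $t^r \|g_t^{(r)}\|_p \le C \omega_r(f, t)_p$. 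The approximation bound $\|f - g_t\|_p = \|(I - A_t)^r f\|_p$ is obtained by writing
$$(I - A_t)^r f = \frac{1}{t^r} \int_{[-t/2,\, t/2]^r} (I - T_{u_1}) \cdots (I - T_{u_r}) f \, du_1 \cdots du_r,$$
where $T_u$ is translation by $u$, expanding the product to recognize a mixed $r$-th difference, and then invoking the classical comparison between mixed and symmetric $r$-th differences on $\mathbb{R}$ to bound the $L_p$ norm by $\omega_r(f, t)_p$ up to a constant depending only on $r$. The main obstacle is the bookkeeping in this last conversion step, along with choosing a threshold $t_0 = t_0(r, p)$ small enough that the approximation steps absorb cleanly into universal constants; the entire argument is parallel to the classical treatment in DeVore--Lorentz.
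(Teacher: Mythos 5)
Your arguments for \eqref{eq:Kequiv2}, \eqref{eq:Kequiv1}, and the upper half of \eqref{eq:Kequiv} are correct, and they broadly match what the paper does (the only cosmetic difference being that the paper derives \eqref{eq:Kequiv1} from \eqref{eq:Kequiv} by taking $g=f$ in the $K$-functional, whereas you integrate the difference operator directly). For the main equivalence \eqref{eq:Kequiv}, however, the paper does not give a proof at all: it simply cites \cite[Chapter 6, Theorem 2.4]{Devore--Lorentz1993} (Johnen's theorem). You are therefore attempting a self-contained proof of a result the paper treats as a black box. That is a legitimate but much more laborious route, and the specific construction you propose for the hard direction $K_r(f,t^r)_p \lesssim \omega_r(f,t)_p$ has a genuine gap.

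The problem is that $g_t = [I - (I-A_t)^r]f = \sum_{j=1}^r (-1)^{j+1}\binom{r}{j}A_t^j f$ is generically not $r$ times weakly differentiable, so the quantity $\|g_t^{(r)}\|_p$ you wish to bound need not be defined. The $j$-th summand $A_t^j f$ is a $j$-fold Steklov mean of $f$ and gains only $j$ degrees of smoothness: already the dominant term $r A_t f$ has a single weak derivative $t^{-1}(f(\cdot+t/2)-f(\cdot-t/2)) \in L_p$, which is not locally absolutely continuous, so $g_t$ fails the admissibility requirement in the definition of $K_r$ as soon as $r\ge 2$. The identity $\partial_x A_t = t^{-1}(T_{t/2}-T_{-t/2})$ does let you apply $\partial_x^j$ to $A_t^j f$ and land on differences of $f$, but applying $\partial_x^r$ to the cross terms with $j<r$ would require $r-j$ additional classical derivatives of $f$, which you do not have. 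The classical Johnen--Scherer construction avoids this by starting from the algebraic identity $f(x) = -\sum_{k=1}^r(-1)^k\binom{r}{k}f(x+ks) + (-1)^r\Delta_s^{r,+}f(x)$ (forward difference), substituting $s=(u_1+\cdots+u_r)/r$, and averaging over $(u_1,\dots,u_r)\in[0,t]^r$: then \emph{every} term that survives into $g_t$ is an $r$-fold Steklov mean, hence lies in $W^{r,p}_{\mathrm{loc}}$ and $\partial_x^r$ turns it into a constant times $t^{-r}$ times an $r$-th difference of step $O(t)$, while $f-g_t$ is directly an average of $r$-th differences, so the mixed-to-equal-step conversion you flag as the main bookkeeping obstacle is not needed at all.
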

\begin{proof}
	The first inequality \eqref{eq:Kequiv} is due to \cite[Chapter 6, Theorem 2.4]{Devore--Lorentz1993}. For the other inequalities, \eqref{eq:Kequiv1} follows from \eqref{eq:Kequiv} by choosing $g=f$, and 
	\eqref{eq:Kequiv2} follows from the definition
	\eqref{eq.moduli_of_smoothness} and the triangle inequality.
\end{proof}

\subsection{Proof of Lemma \ref{lemma:lone_xi_bias_var}}
The proof of Lemma \ref{lemma:lone_xi_bias_var} follows in turn from sequence of lemmas. We first consider the case where $|f_h(x)|$ is small for which the next lemma is crucial. 
\begin{lemma}\label{lemma:small_regime}
	Let $|\mu|\le 2c_1\lambda_h\sqrt{\ln n}$, and $X\sim\cN(\mu,\lambda_h^2)$. Then for $c_2\ln n\ge 1, 4c_1^2\ge c_2$, the bias and variance of $P(X)$ in estimating $|\mu|$ can be upper bounded as
	\be
	|\bE P(X) - |\mu||&\le \frac{2c_1\beta_1}{c_2}\cdot \frac{\lambda_h}{\sqrt{\ln n}},\\
	\var(P(X)) &\le 2^{7c_2\ln n+4}c_1^2c_2^2\lambda_h^2(\ln n)^3,
	\ee
	where the constant $\beta_1$ appears in Lemma \ref{lemma:approx_error}.
\end{lemma}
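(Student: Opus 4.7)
The plan is to split the lemma into a bias estimate driven by the unbiasedness property of Hermite polynomials for Gaussian moments, and a variance estimate driven by bounding the weighted $\ell^1$-norm of the coefficients $\{g_{K,k}^{(1)}\}$ via Chebyshev comparison. For the bias, let $Z = X/\lambda_h \sim \cN(\mu/\lambda_h,1)$ and rescale by $t = \mu/(2c_1\lambda_h\sqrt{\ln n})$, so $|t|\le 1$. Lemma \ref{lemma:hermite} gives $\bE[H_k(Z)] = (\mu/\lambda_h)^k$, whence
\[
\bE P(X) = 2c_1\lambda_h\sqrt{\ln n}\cdot Q_K(t), \qquad Q_K(t) \triangleq \sum_{k=0}^K g_{K,k}^{(1)} t^k.
\]
Since $Q_K$ is by construction the best degree-$K$ uniform approximation of $|t|$ on $[-1,1]$ and $|\mu| = 2c_1\lambda_h\sqrt{\ln n}\cdot |t|$, Lemma \ref{lemma:approx_error} combined with $K \ge c_2\ln n$ immediately gives
\[
|\bE P(X) - |\mu|| \;\le\; 2c_1\lambda_h\sqrt{\ln n}\cdot \beta_1/K \;\le\; \frac{2c_1\beta_1}{c_2}\cdot \frac{\lambda_h}{\sqrt{\ln n}}.
\]

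For the variance, write $P(X) = \sum_{k=0}^K a_k H_k(Z)$ with $a_k = g_{K,k}^{(1)}(2c_1\lambda_h\sqrt{\ln n})^{1-k}\lambda_h^k$, and apply Minkowski's inequality in $L^2$ to obtain $\sqrt{\var(P(X))}\le \sum_k |a_k|\sqrt{\bE H_k^2(Z)}$. Invoking the second bound of Lemma \ref{lemma:hermite} with $M = 2\sqrt{2}c_1\sqrt{\ln n}$ is legitimate, because the hypotheses $c_2\ln n \ge 1$ and $4c_1^2 \ge c_2$ force $K \le c_2\ln n + 1 \le 2c_2\ln n \le 8c_1^2\ln n = M^2$ while $|\mu/\lambda_h|\le 2c_1\sqrt{\ln n}\le M$. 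This yields $\bE H_k^2(Z) \le (2M^2)^k = (16c_1^2\ln n)^k$, and substituting simplifies the resulting sum to
\[
\sqrt{\var(P(X))} \;\le\; 2c_1\lambda_h\sqrt{\ln n}\cdot \sum_{k=0}^K |g_{K,k}^{(1)}|\cdot 2^k.
\]

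The remaining ingredient is the sum $S := \sum_k|g_{K,k}^{(1)}|\cdot 2^k$. Since $|Q_K(t)|\le |t|+\beta_1/K\le 2$ on $[-1,1]$, Lemma \ref{lemma:chebyshev} applied to $Q_K/2$ dominates each $|g_{K,k}^{(1)}|$ by twice the absolute value of the corresponding coefficient of $T_K$ or $T_{K-1}$ (according to parity). A short calculation starting from $T_K(\cos\theta) = \cos(K\theta)$ with the substitution $\cos\theta = iz$ shows $\sum_k |[T_K]_k|\cdot z^k = \cosh(K\,\text{arcsinh}(z))$ for even $K$ (and the analogous $\sinh$ identity for odd $K$); evaluating at $z = 2$ gives $\sum_k|[T_K]_k|\cdot 2^k \le (2+\sqrt{5})^K$, so $S\le 4(2+\sqrt{5})^K$. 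Squaring the variance bound, using $K\le 2c_2\ln n$ together with $(2+\sqrt{5})^2 < 18 < 2^{4.2}$, and absorbing the residual polynomial factors $c_1^2c_2^2(\ln n)^3$ delivers the claimed estimate. The main obstacle is this final coefficient-tracking step: a naive $|[T_K]_k|\le 2^K$ would give a noticeably worse exponent, and it is the sharper generating-function computation above (combined with Lemma \ref{lemma:chebyshev}) that is needed to hit the advertised constant $2^{7c_2\ln n+4}$; the bias argument and the $L^2$-triangle inequality step are routine by comparison.
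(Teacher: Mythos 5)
Your bias argument is identical in substance to the paper's: both rescale $t=\mu/(2c_1\lambda_h\sqrt{\ln n})$, apply Lemma \ref{lemma:hermite} to identify $\bE P(X)$ with the best-approximating polynomial evaluated at $t$, and invoke Lemma \ref{lemma:approx_error} together with $K\ge c_2\ln n$. For the variance you take a genuinely different and sharper route. The paper bounds $\bE[P(X)^2]$ by Cauchy--Schwarz, $(K+1)\sum_k a_k^2 \bE H_k^2$, and then uses only the crude coefficient bound $|g_{K,k}^{(1)}|\le 2^{3K}$ from Lemma \ref{lemma:chebyshev}. You instead apply Minkowski to the standard deviation, reduce everything to the weighted $\ell^1$ sum $\sum_k|g_{K,k}^{(1)}|2^k$, and control that via the Chebyshev generating-function identity $\sum_k |[T_K]_k|z^k=\cosh(K\,\mathrm{arcsinh}\,z)$ (and the $\sinh$ analogue for odd degree), evaluated at $z=2$; I checked the identity and it is correct. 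This yields a factor $(2+\sqrt{5})^{2K}\approx 2^{4.17K}$ where the paper obtains $2^{7K}$, so your bound is strictly tighter. Your choice $M=2\sqrt{2}\,c_1\sqrt{\ln n}$ in Lemma \ref{lemma:hermite} is also safer: with the paper's implicit $M=2c_1\sqrt{\ln n}$, the requirement $K\le M^2$ only barely holds under the stated hypotheses.

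The one concrete issue is your final absorption step. You invoke $K\le 2c_2\ln n$, which turns the exponential factor into $2^{4.2\cdot 2c_2\ln n}=2^{8.4c_2\ln n}$, and $8.4c_2\ln n>7c_2\ln n$; the residual $2^{1.4c_2\ln n}=n^{1.4c_2\ln 2}$ grows polynomially in $n$ and cannot be absorbed by the factor $c_1^2c_2^2(\ln n)^3$. You should instead use the sharper $K\le c_2\ln n+1$; then $(2+\sqrt 5)^{2K}\le 2^{4.2c_2\ln n+4.2}$, the exponent sits well below $7c_2\ln n$, and the remaining bounded factors are absorbable. I will note, however, that the paper's own proof concludes with $2^{7K+4}c_1^2K^2\lambda_h^2\ln n$ and then tacitly replaces $K=\lceil c_2\ln n\rceil$ by $c_2\ln n$ to state the lemma, so the advertised constant $2^{7c_2\ln n+4}c_1^2c_2^2(\ln n)^3$ is not exactly recovered by the paper's argument either; what actually matters downstream (cf.\ the use of $7c_2\ln 2<\epsilon$) is only that the variance be $O(n^{7c_2\ln 2+o(1)}\lambda_h^2)$, and both arguments deliver that, yours with a smaller exponent.
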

\begin{proof}
	By Lemma \ref{lemma:hermite} we know that
	\be
	\bE P(X) = \sum_{k=0}^K g_{K,k}(2c_1\lambda_h\sqrt{\ln n})^{1-k}\cdot \mu^k.
	\ee
	By Lemma \ref{lemma:approx_error}, we have
	\be
	\sup_{x\in [-1,1]} \left|\sum_{k=0}^K g_{K,k}x^k - |x|\right| \le \frac{\beta_1}{K}.
	\ee
	By a variable substitution $x\mapsto \frac{\mu}{2c_1\lambda_h\sqrt{\ln n}}$, we obtain
	\be
	\sup_{|\mu|\le 2c_1\lambda_h\sqrt{\ln n}} \left|\sum_{k=0}^K g_{K,k}(2c_1\lambda_h\sqrt{\ln n})^{1-k}\cdot \mu^k - |\mu|\right| \le \frac{\beta_1}{K}\cdot 2c_1\lambda_h\sqrt{\ln n}.
	\ee
	Hence, the bias of $P(X)$ is upper bounded by
	\be
	|\bE P(X) - |\mu|| \le \frac{\beta_1}{K}\cdot 2c_1\lambda_h\sqrt{\ln n} = \frac{2c_1\beta_1}{c_2}\cdot \frac{\lambda_h}{\sqrt{\ln n}}, 
	\ee
	as desired. 
	
	As for the variance, first Lemma \ref{lemma:chebyshev} tells us
	\be
	|g_{K,k}| \le 2^{3K}, \qquad k=0,1,\cdots,K.
	\ee
	Hence, with the help of Lemma \ref{lemma:hermite}, we know that
	\be
	\var(P(X)) &\le \bE [P(X)^2]\\
	&\le (K+1)\sum_{k=0}^K |g_{K,k}|^2(2c_1\lambda_h\sqrt{\ln n})^{2(1-k)}\cdot \lambda_h^{2k}\bE[H_k(\frac{X}{\lambda_h})^2]\\
	&\le 2^{6K+1}K\sum_{k=0}^K (2c_1\lambda_h\sqrt{\ln n})^{2(1-k)}\cdot \lambda_h^{2k}[2(2c_1\sqrt{\ln n})^2]^k\\
	&\le 2^{7K+3}K\sum_{k=0}^K c_1^2\lambda_h^2\ln n\\
	&\le 2^{7K+4}c_1^2K^2\lambda_h^2\ln n, 
	\ee
	where we have used the fact that $k\le K\le (2c_1\sqrt{\ln n})^2$.
\end{proof}

The next lemma is useful to analyze the plug-in approach where $f_h(x)$ is large. The key observation in this regime is that the plug-in approach is almost unbiased due to the measure concentration property of Gaussian distribution.
\begin{lemma}\label{lemma:large_regime}
	Let $|\mu|\ge \frac{c_1}{2}\lambda_h\sqrt{\ln n}$, $X\sim\cN(\mu,\lambda_h^2)$, and $k\ge 2$ be any integer. Then
	\be
	|\bE|X|-|\mu||&\le \frac{4\lambda_h}{c_1\sqrt{\ln n}}\cdot n^{-c_1^2/8}, \\
	\bE||X|-\bE|X||^k &\le C(c_1,k)\lambda_h^{k},
	\ee
	where $C(c_1,k)$ is a universal constant depending on $k$ and $c_1$ only. In particular, when $k=2$, we have
	\be
	\var(|X|) \le C(c_1,2)\lambda_h^2.
	\ee
\end{lemma}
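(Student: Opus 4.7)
By the symmetry $|X| \overset{d}{=} |-X|$ with $-X\sim\cN(-\mu,\lambda_h^2)$, the reduction to $\mu\ge c_1\lambda_h\sqrt{\ln n}/2>0$ is without loss of generality, so the targets become $\bE|X|-\mu$ and $\bE||X|-\bE|X||^k$. For the bias, I will use the identity $|X|=X+2\max(-X,0)$, giving $\bE|X|-\mu = 2\,\bE[\max(-X,0)]$. A direct integration against the Gaussian density, after the substitution $u=(x-\mu)/\lambda_h$, yields the closed form
\[
\bE|X|-\mu \;=\; 2\lambda_h\,\phi(\mu/\lambda_h)-2\mu\,\bar{\Phi}(\mu/\lambda_h),
\]
where $\phi$ and $\bar{\Phi}$ are the standard normal density and tail. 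Since $\bar{\Phi}\ge 0$, this is already bounded above by $2\lambda_h\phi(\mu/\lambda_h)$; plugging in $\mu/\lambda_h \ge c_1\sqrt{\ln n}/2$ bounds $\phi(\mu/\lambda_h)\le (2\pi)^{-1/2} n^{-c_1^2/8}$. To recover the exact form $4\lambda_h/(c_1\sqrt{\ln n})\cdot n^{-c_1^2/8}$ stated in the lemma, I will sharpen the bound using the lower Mills inequality $\bar{\Phi}(x)\ge (x^{-1}-x^{-3})\phi(x)$ for $x>0$, which exploits the cancellation between the two terms above and yields $\bE|X|-\mu \le 2\lambda_h^3\mu^{-2}\phi(\mu/\lambda_h)$; substituting the lower bound on $\mu$ gives an expression even tighter than the stated target.

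For the $k$-th central moment, I will exploit that $x\mapsto |x|$ is $1$-Lipschitz. The reverse triangle inequality gives $||X|-|\mu||\le |X-\mu|$, and Jensen yields $||\mu|-\bE|X|| \le \bE|X-\mu|$; combining,
\[
||X|-\bE|X|| \;\le\; |X-\mu| + \bE|X-\mu|.
\]
Applying $(a+b)^k\le 2^{k-1}(a^k+b^k)$, then Jensen on the deterministic summand, and the Gaussian absolute-moment identity $\bE|X-\mu|^k=\lambda_h^k\,\bE|Z|^k$ for $Z\sim\cN(0,1)$, I obtain $\bE||X|-\bE|X||^k \le 2^k\bE|Z|^k\cdot\lambda_h^k$, which is of the required form with a constant depending only on $k$ (the $c_1$ dependence being vacuous). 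In particular, taking $k=2$ gives $\var(|X|)\le 2\lambda_h^2$, consistent with the stated variance bound.

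The main step requiring care is the bias estimate, where a naive bound on $\phi(\mu/\lambda_h)$ alone loses an extra $(\ln n)^{-1}$ factor; the refined Mills' inequality is what isolates the cancellation between $2\lambda_h\phi$ and $2\mu\bar{\Phi}$ and produces the polynomial factor $(c_1\sqrt{\ln n})^{-1}$ in the target. The moment bound is essentially a one-line consequence of the $1$-Lipschitz property of $|\cdot|$ together with standard Gaussian moment computations, and presents no real obstacle.
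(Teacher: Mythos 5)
Your proof is correct, and both halves take genuinely different routes from the paper's. For the bias, the paper writes $|\bE|X|-\mu| = 2\int_{-\infty}^0 \bP(X\le t)\,dt$, invokes the Gaussian tail bound $\bP(X\le t)\le \exp(-(t-\mu)^2/2\lambda_h^2)$, and then multiplies and divides by $(\mu-t)/\lambda_h^2$ so that the integrand becomes an exact derivative, yielding $\frac{2\lambda_h^2}{\mu}e^{-\mu^2/2\lambda_h^2}$; you instead compute the closed form $2\lambda_h\phi(\mu/\lambda_h)-2\mu\bar\Phi(\mu/\lambda_h)$ and feed in the two-term Mills bound $\bar\Phi(x)\ge (x^{-1}-x^{-3})\phi(x)$, getting $2\lambda_h^3\mu^{-2}\phi(\mu/\lambda_h)$. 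Both arguments extract the crucial $1/\mu$ gain over the naive tail bound (which, as you correctly note, is too weak to give the stated $(c_1\sqrt{\ln n})^{-1}$ factor); your final bound is actually an extra factor of $\lambda_h/(\mu\sqrt{2\pi})$ sharper. For the $k$-th central moment, the paper's argument is a three-term triangle decomposition $||X|-\bE|X||\le||X|-X|+|X-\mu|+|\bE|X|-\mu|$, with the first term controlled by another Gaussian tail computation $\bE|X|^k\ind(X\le 0)\lesssim \lambda_h^k n^{-c_1^2/16}$; your argument combines the $1$-Lipschitz property of $|\cdot|$ with Jensen to get $||X|-\bE|X||\le|X-\mu|+\bE|X-\mu|$ and then reads off $\bE||X|-\bE|X||^k\le 2^k\lambda_h^k\bE|Z|^k$ directly. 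Your moment bound is more elementary: it dispenses with the tail estimate, holds for all $\mu$ (not just $|\mu|\ge\frac{c_1}{2}\lambda_h\sqrt{\ln n}$), and makes clear that the constant can be taken independent of $c_1$, which the paper's proof does not make explicit.
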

\begin{proof}
	By symmetry we can assume that $\mu\ge \frac{c_1}{2}\lambda_h\sqrt{\ln n}$, then the bias can be written as
	\be
	|\bE|X| - \mu| = |\bE|X|-\bE X| = 2\bE|X|\mathbbm{1}(X\le 0).
	\ee
	With the help of the Gaussian tail bound (Lemma \ref{lemma:gauss_tail}), we have
	\be
	|\bE|X| - \mu| &= 2\int_{-\infty}^{0} \bP(X\le t)dt \\
	&= 2\int_{-\infty}^{0} \bP(\frac{X-\mu}{\lambda_h}\le \frac{t-\mu}{\lambda_h})dt \\
	&\le 2\int_{-\infty}^{0} \exp(-\frac{(t-\mu)^2}{2\lambda_h^2})dt\\
	&\le \frac{2\lambda_h^2}{\mu}\int_{-\infty}^{0} \frac{\mu-t}{\lambda_h^2}\exp(-\frac{(t-\mu)^2}{2\lambda_h^2})dt\\
	&= \frac{2\lambda_h^2}{\mu}\exp(-\frac{\mu^2}{2\lambda_h^2})\le \frac{4\lambda_h}{c_1\sqrt{\ln n}}\cdot n^{-c_1^2/8},
	\ee
	which completes the proof of the bias bound. 
	
	As for the $k$-th central moment, we have
	\be
	\ & \bE ||X|-\bE |X||^k \\
	&\le 3^{k-1} \left(\bE||X|-X|^k + \bE|X-\mu|^k + |\bE|X|-\mu|^k\right) \\
	&\le 3^{k-1}\left(2^k\bE |X|^k\mathbbm{1}(X\le 0) + \lambda_h^k (k-1)!! + \left(\frac{4\lambda_h}{c_1\sqrt{\ln n}}\cdot n^{-c_1^2/8}\right)^k\right)
	\ee
	where we have used the previous bias bound for the last term. Using the same technique as before, 
	\be
	\bE |X|^k\mathbbm{1}(X\le 0) &= k\int_{-\infty}^{0} |t|^{k-1}\bP(X\le t)dt \\
	&\le k\int_{-\infty}^{0} |t|^{k-1}\exp\left(-\frac{(t-\mu)^2}{2\lambda_h^2}\right)dt\\
	&\le k\lambda_h^k\int_{-\infty}^{0} \frac{1}{\lambda_h}\left(\frac{\mu-t}{\lambda_h}\right)^{k-1}\exp\left(-\frac{(t-\mu)^2}{2\lambda_h^2}\right)dt\\
	&= k\lambda_h^k \cdot \sqrt{2\pi}\bE\left(Z^{k-1}\mathbbm{1}\left(Z\ge \frac{c_1}{2}\sqrt{\ln n}\right)\right) \\
	&\leq C\lambda_h^kn^{-c_1^2/16},
	\ee
	where $Z=\frac{\mu-X}{\lambda_h}\sim \mathcal{N}(0,1)$ is a standard normal random variable, the constant $C$ depends on $c_1$ and $k$, and the last inequality is due to Cauchy--Schwartz
	\be
	\bE\left(Z^{k-1}\mathbbm{1}\left(Z\ge \frac{c_1}{2}\sqrt{\ln n}\right)\right) \le \bE^{1/2}(Z^{2k-2})\bP^{1/2}(Z\ge \frac{c_1}{2}\sqrt{\ln n})
	\ee
	and Lemma \ref{lemma:gauss_tail}. Hence the $k$-th central moment bound is proved.
\end{proof}
\begin{proof}[Proof of Lemma \ref{lemma:lone_xi_bias_var}]
	Throughout the proof, for any two sequences $a_n,b_n$, we will use the notation $a_n\lesssim b_n$ whenever $|\frac{a_n}{b_n}|$ is upper bounded by a universal constant which only depends on $c_1, c_2, k$. 
	
	Note that $X,Y$ are independent, Lemma \ref{lemma:cailow1} can be employed here to establish upper bounds on $k^{\mathrm{th}}$ central moments of $\xi(X,Y)$. We distinguish into three cases: 
	\begin{enumerate}
		\item Case I: $|\mu|\le \frac{c_1}{2}\lambda_h\sqrt{\ln n}$. By Lemma \ref{lemma:small_regime} and Markov's inequality,
		\be
		|\bE \tilde{P}(X) - |\mu|| &\le |\bE P(X)-|\mu|| + \bE|P(X)-\tilde{P}(X)| \\
		&= |\bE P(X)-|\mu|| + \bE|P(X)\mathbbm{1}(|P(X)|\ge n^{2\epsilon}\lambda_h)| \\
		&\lesssim \frac{\lambda_h}{\sqrt{\ln n}} + \frac{\bE|P(X)|^2}{n^{2\epsilon}\lambda_h} \lesssim  \frac{\lambda_h}{\sqrt{\ln n}} + \frac{\lambda_h}{n^{\epsilon}}. \label{eq.P_tilde}
		\ee
		
		Hence, 
		\be
		|\bE\xi(X,Y)-|\mu|| &\le |\bE \tilde{P}(X)-|\mu||  + (\bE|\tilde{P}(X)| + \bE|X|)\cdot \bP(|Y|\ge c_1\lambda_h\sqrt{\ln n})\\
		&\le |\bE \tilde{P}(X)-|\mu|| + (n^{2\epsilon}\lambda_h + |\mu| + \lambda_h)\cdot \bP\left(\left|\frac{Y-\mu}{\lambda_h}\right|\ge \frac{c_1}{2}\sqrt{\ln n}\right)\\
		&\lesssim \frac{\lambda_h}{\sqrt{\ln n}} + \frac{\lambda_h}{n^{\epsilon}} + (n^{2\epsilon}\lambda_h + \frac{c_1}{2}\lambda_h\sqrt{\ln n} + \lambda_h)\cdot n^{-c_1^2/8}
		\lesssim \frac{\lambda_h}{\sqrt{\ln n}},
		\ee
		where 
		\begin{enumerate}
			\item the first inequality follows from the triangle inequality; 
			\item the second inequality follows from $\bE|X|\le \mu+\bE|X-\mu|\le \mu+\lambda_h$;
			\item the third inequality follows from \eqref{eq.P_tilde}, the assumption $|\mu|\le \frac{c_1}{2}\lambda_h\sqrt{\ln n}$ and Lemma \ref{lemma:gauss_tail}; 
			\item the last inequality follows from $c_1>\sqrt{8k}$. 
		\end{enumerate} 
		
		As for the $k$-th central moment, Lemma \ref{lemma:cailow1} yields
		\be
		\ &\E|\xi(X,Y)-\E \xi(X,Y)|^k \\
		&\lesssim \E|\tilde{P}(X)-\E\tilde{P}(X)|^k + (\E||X|-\E |X||^k + |\E\tilde{P}(X) - \E |X||^k) \cdot \P(|Y|\ge c_1\lambda_h\sqrt{\ln n})\\
		&\lesssim \E|\tilde{P}(X)|^k + \left(\lambda_h^k + (n^{2\epsilon}\lambda_h+|\mu|+\lambda_h)^k\right)\cdot n^{-c_1^2/8}\lesssim (\lambda_hn^{2\epsilon})^k,
		\ee
		where the second inequality follows from the triangle inequality, Lemma \ref{lemma:large_regime} and $|\E\tilde{P}(X) - \E |X|| \le |\bE \tilde{P}(X)| + |\mu| + \bE|X-\mu| \le n^{2\epsilon}\lambda_h+|\mu|+\lambda_h$. 
		
		\item Case II: $\frac{c_1}{2}\lambda_h\sqrt{\ln n}<|\mu|<2c_1\lambda_h\sqrt{\ln n}$.
		For the bias bound, we employ the triangle inequality, \eqref{eq.P_tilde} and Lemma \ref{lemma:large_regime} to obtain
		\be
		|\bE\xi(X,Y)-|\mu|| &\le |\bE \tilde{P}(X) - |\mu|| + |\bE |X| - |\mu||\\
		&\lesssim \frac{\lambda_h}{\sqrt{\ln n}} + \frac{\lambda_h}{n^{\epsilon}} + \frac{\lambda_h}{\ln n}\cdot n^{-c_1^2/8} \lesssim \frac{\lambda_h}{\sqrt{\ln n}}. 
		\ee
		
		As for the $k$-th central moment, by Lemma \ref{lemma:cailow1} we have
		\be
		\ &\bE|\xi(X,Y)-\bE\xi(X,Y)|^k \\
		&\lesssim \bE|\tilde{P}(X)-\bE\tilde{P}(X)|^k + \bE||X|-\bE|X||^k + |\bE\tilde{P}(X) - \bE |X||^k\\
		&\lesssim \bE|\tilde{P}(X)|^k + \bE||X|-\bE|X||^k + |\bE\tilde{P}(X) - |\mu||^k + |\bE|X|-|\mu||^k\\
		&\lesssim (\lambda_hn^{2\epsilon})^k + \lambda_h^k + \left(\frac{\lambda_h}{\sqrt{\ln n}}+\frac{\lambda_h}{n^{\epsilon}}\right)^k + \left(\frac{\lambda_h}{\sqrt{\ln n}}\cdot n^{-c_1^2/8}\right)^k\lesssim (\lambda_hn^{2\epsilon})^k,
		\ee
		where the third inequality follows from Lemma \ref{lemma:large_regime} and \eqref{eq.P_tilde}. 
		
		\item Case III: $|\mu|\ge 2c_1\lambda_h\sqrt{\ln n}$. By Lemma \ref{lemma:large_regime} and Lemma \ref{lemma:gauss_tail}, the bias bound is given by
		\be
		|\bE\xi(X,Y)-|\mu||&\le |\bE|X|-|\mu|| + (\bE|X| + \bE|\tilde{P}(X)|)\cdot \bP(|Y|\le c_1\lambda_h\sqrt{\ln n})\\
		&\le |\bE|X|-|\mu|| + (\bE|X| + \bE|\tilde{P}(X)|)\cdot \bP\left(\left|\frac{Y-\mu}{\lambda_h}\right|\ge \frac{|\mu|}{2\lambda_h}\right)\\
		&\lesssim \frac{\lambda_h}{\sqrt{\ln n}}\cdot n^{-c_1^2/8} + (|\mu|+\lambda_h+\lambda_hn^{2\epsilon})\cdot \exp(-\frac{\mu^2}{8\lambda_h^2})\lesssim \frac{\lambda_h}{\sqrt{\ln n}},
		\ee
		where the last inequality follows from $c_1>\sqrt{8k}$ and
		$$
		\sup_{|\mu| \ge 2c_1\lambda_h\sqrt{\ln n}} |\mu| \exp(-\frac{\mu^2}{8\lambda_h^2}) \lesssim \frac{1}{n}.
		$$
		
		By Lemmas \ref{lemma:gauss_tail}, \ref{lemma:cailow1} and \ref{lemma:large_regime}, the $k$-th central moment can be upper bounded as
		\be
		\ &\bE|\xi(X,Y)-\bE\xi(X,Y)|^k \\
		&\lesssim \bE(|X|-\bE|X|)^k + (\bE|\tilde{P}(X)-\bE\tilde{P}(X)|^k + |\bE|X| - \bE\tilde{P}(X)|^k)\cdot \bP(|Y|\le c_1\lambda_h\sqrt{\ln n})\\
		&\lesssim \lambda_h^k + ((\lambda_hn^{2\epsilon})^k + (|\mu|+\lambda_h+\lambda_hn^{2\epsilon})^k)\cdot \exp(-\frac{\mu^2}{8\lambda_h^2})\lesssim \lambda_h^k,
		\ee
	\end{enumerate}
	where again the last step follows from taking supremum over $|\mu|\ge 2c_1\lambda_h\sqrt{\ln n}$.   
	
	Combining these three cases completes the proof of the lemma.
\end{proof}

\subsection{Proof of Lemma \ref{lemma:overall_lr_odd}} The proof of Lemma \ref{lemma:overall_lr_odd} follows in turn from sequence of lemmas. We first consider the case where $|f_h(x)|$ is small for which the next lemma is crucial.

\begin{lemma}\label{lemma:small_regime_lr}
	Let $|\mu|\le 2c_1\lambda_h\sqrt{\ln n}$, and $X\sim\cN(\mu,\lambda_h^2)$. Then for $c_2\ln n\ge 1, 4c_1^2\ge c_2$, the bias and variance of $P_r(X)$ in estimating $|\mu|^r$ can be upper bounded as
	\be
	|\bE P_r(X) - |\mu|^r|&\le \beta_r\cdot \left(\frac{2c_1\lambda_h}{c_2\sqrt{\ln n}}\right)^r,\\
	\var(P_r(X)) &\le 2^{7c_2\ln n+2}c_2^2(\ln n)^2\cdot (2c_1\lambda_h\sqrt{\ln n})^{2r},
	\ee
	where the constant $\beta_r$ appears in Lemma \ref{lemma:approx_error}.
\end{lemma}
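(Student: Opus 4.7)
The plan is to follow the template established by Lemma \ref{lemma:small_regime} for the $r=1$ case, modifying only the two places where the exponent $r$ appears: the approximation error of $|x|^r$ and the coefficient-scaling factor $(2c_1\lambda_h\sqrt{\ln n})^{r-k}$ in the definition of $P_r$.

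For the bias, I would start from Lemma \ref{lemma:hermite} applied to $X/\lambda_h \sim \cN(\mu/\lambda_h, 1)$, which gives $\bE[\lambda_h^k H_k(X/\lambda_h)] = \mu^k$, so that
\[
\bE P_r(X) = \sum_{k=0}^K g_{K,k}^{(r)} (2c_1\lambda_h\sqrt{\ln n})^{r-k}\mu^k.
\]
Under the hypothesis $|\mu|\le 2c_1\lambda_h\sqrt{\ln n}$, the point $x=\mu/(2c_1\lambda_h\sqrt{\ln n})$ lies in $[-1,1]$. Factoring out $(2c_1\lambda_h\sqrt{\ln n})^r$ and applying Lemma \ref{lemma:approx_error} to the degree-$K$ best approximation of $|x|^r$ yields
\[
|\bE P_r(X)-|\mu|^r| \le (2c_1\lambda_h\sqrt{\ln n})^r \cdot \frac{\beta_r}{K^r}.
\]
Using $K\ge c_2\ln n$ gives exactly the stated bias bound.

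For the variance, I bound $\var(P_r(X))\le \bE[P_r(X)^2]$ and use Cauchy--Schwarz on the sum to obtain
\[
\bE[P_r(X)^2] \le (K+1)\sum_{k=0}^K |g_{K,k}^{(r)}|^2 (2c_1\lambda_h\sqrt{\ln n})^{2(r-k)}\lambda_h^{2k}\,\bE[H_k(X/\lambda_h)^2].
\]
Two ingredients control this sum. First, because $||x|^r|\le 1$ on $[-1,1]$ and the approximation error is at most $\beta_r/K^r\le 1$, the best-approximation polynomial is uniformly bounded by $2$ on $[-1,1]$, so Lemma \ref{lemma:chebyshev} yields $|g_{K,k}^{(r)}|\le 2^{3K}$ (following the same estimate used inside the proof of Lemma \ref{lemma:small_regime}). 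Second, Lemma \ref{lemma:hermite} applied with $M=2c_1\sqrt{\ln n}$ gives $\bE[H_k(X/\lambda_h)^2]\le (8c_1^2\ln n)^k$; the hypothesis $k\le K\le M^2$ needed for this estimate is exactly what the condition $4c_1^2\ge c_2$ guarantees.

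Plugging these in, the $k$-th summand becomes $2^{6K}(2c_1\lambda_h\sqrt{\ln n})^{2r}\cdot (8c_1^2\ln n/(4c_1^2\ln n))^k = 2^{6K}(2c_1\lambda_h\sqrt{\ln n})^{2r}\cdot 2^k$, so the geometric series sums to $2^{K+1}$ and, with the outer factor of $K+1$, yields a bound of the form $2^{7K+2}K^2(2c_1\lambda_h\sqrt{\ln n})^{2r}$, matching the stated estimate once $K\le c_2\ln n+1$ is absorbed into constants. The main (minor) obstacle is the coefficient bound $|g_{K,k}^{(r)}|\le 2^{3K}$: one must justify the uniform boundedness of the approximating polynomial on $[-1,1]$ before Lemma \ref{lemma:chebyshev} can be invoked, but this is immediate once we know both $||x|^r|\le 1$ and that the uniform error is small.
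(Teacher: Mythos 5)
Your proposal is correct and follows essentially the same route as the paper's proof: the bias bound via the Hermite unbiasedness identity, the rescaled best-approximation error from Lemma \ref{lemma:approx_error}, and the variance bound via Cauchy--Schwarz, the Chebyshev coefficient bound $|g_{K,k}^{(r)}|\le 2^{3K}$ from Lemma \ref{lemma:chebyshev}, and the second-moment bound $\bE[H_k^2]\le(2M^2)^k$ with $M=2c_1\sqrt{\ln n}$. Your explicit justification of the uniform boundedness of the approximating polynomial before invoking Lemma \ref{lemma:chebyshev} is a point the paper leaves implicit, but otherwise the arguments coincide.
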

\begin{proof}
	By Lemma \ref{lemma:hermite} we know that
	\be
	\bE P_r(X) = \sum_{k=0}^K g_{K,k}^{(r)}(2c_1\lambda_h\sqrt{\ln n})^{r-k}\cdot \mu^k.
	\ee
	By Lemma \ref{lemma:approx_error}, we have
	\be
	\sup_{x\in [-1,1]} \left|\sum_{k=0}^K g_{K,k}^{(r)}x^k - |x|^r\right| \le \frac{\beta_r}{K^r}.
	\ee
	By a variable substitution $x\mapsto \frac{\mu}{2c_1\lambda_h\sqrt{\ln n}}$, we obtain
	\be
	\sup_{|\mu|\le 2c_1\lambda_h\sqrt{\ln n}} \left|\sum_{k=0}^K g_{K,k}^{(r)}(2c_1\lambda_h\sqrt{\ln n})^{r-k}\cdot \mu^k - |\mu|^r\right| \le \beta_r\cdot \left(\frac{2c_1\lambda_h\sqrt{\ln n}}{K}\right)^r.
	\ee
	Hence, the bias of $P_r(X)$ is upper bounded by
	\be
	|\bE P_r(X) - |\mu|^r| \le \beta_r\cdot \left(\frac{2c_1\lambda_h\sqrt{\ln n}}{K}\right)^r = \beta_r\cdot \left(\frac{2c_1\lambda_h}{c_2\sqrt{\ln n}}\right)^r, 
	\ee
	as desired. 
	
	As for the variance, first Lemma \ref{lemma:chebyshev} tells us
	\be
	|g_{K,k}| \le 2^{3K}, \qquad k=0,1,\cdots,K.
	\ee
	Hence, with the help of Lemma \ref{lemma:hermite}, we know that
	\be
	\var(P_r(X)) &\le \bE [P_r(X)^2]\\
	&\le (K+1)\sum_{k=0}^K |g_{K,k}^{(r)}|^2(2c_1\lambda_h\sqrt{\ln n})^{2(r-k)}\cdot \lambda_h^{2k}\bE\left[H_k(\frac{X}{\lambda_h})^2\right]\\
	&\le 2^{6K+1}K\sum_{k=0}^K (2c_1\lambda_h\sqrt{\ln n})^{2(r-k)}\cdot \lambda_h^{2k}[2(2c_1\sqrt{\ln n})^2]^k\\
	&\le 2^{7K+1}K\sum_{k=0}^K (2c_1\lambda_h\sqrt{\ln n})^{2r}\\
	&\le 2^{7K+2}K^2(2c_1\lambda_h\sqrt{\ln n})^{2r}, 
	\ee
	where we have used the fact that $k\le K\le (2c_1\sqrt{\ln n})^2$.
\end{proof}

Next we analyze the ``smooth" regime where $|f_h(x)|$ is large. If $f_h(x)>0$, the Taylor expansion based estimator $S_{\lambda_h}(\tilde{f}_{h,1}(x),\tilde{f}_{h,2}(x))$ is analyzed in detail in the following lemma. The analysis of the estimator $S_{\lambda_h}(-\tilde{f}_{h,1}(x), -\tilde{f}_{h,2}(x))$ in the case $f_h(x)<0$ then follows by symmetry. Subsequently, we can take into account the sample splitting approach, and following the same approach as of the proof of Lemma \ref{lemma:lone_xi_bias_var} one can complete the proof of Lemma \ref{lemma:overall_lr_odd}. We omit the details.

\begin{lemma}\label{lemma:large_regime_lr}
	Let $\mu\ge \frac{c_1}{2}\lambda_h\sqrt{\ln n}$, $k\ge 2$ be any integer, and $X_1,X_2\sim\cN(\mu,\lambda_h^2)$ be independent. The bias and $k$-th central moment of $S_{\lambda_h}(X_1,X_2)$ in estimating $\mu^r$ can be upper bounded as
	\be
	|\bE S_{\lambda_h}(X_1,X_2)-\mu^r|&\le \lambda_h^r(\sqrt{\ln n})^{r-R-1}+ (\lambda_h\sqrt{\ln n})^r n^{-c_1^2/32}, \\
	\bE|S_{\lambda_h}(X_1,X_2) - \bE S_{\lambda_h}(X_1,X_2)|^k &\le C(r,k)\lambda_h^{k}\mu^{(r-1)k},
	\ee
	where $C(r,k)>0$ is a universal constant depending only on $r,k$ and $c_1$. In particular, for $k=2$ we have
	\be
	\var(S_{\lambda_h}(X_1,X_2)) \le C(r,2)\lambda_h^2\mu^{2r-2}.
	\ee
\end{lemma}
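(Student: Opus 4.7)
The approach rests on a Taylor expansion of $t^r$ about $X_1$ together with a Hermite translation identity. Taylor's theorem with Lagrange remainder gives $\mu^r = \sum_{k'=0}^R \binom{r}{k'} X_1^{r-k'}(\mu - X_1)^{k'} + R_R(X_1, \mu)$, where $R_R(X_1, \mu) = \binom{r}{R+1}\xi^{r-R-1}(\mu - X_1)^{R+1}$ for some $\xi$ between $X_1$ and $\mu$. The identity $H_n(x+y) = \sum_{j=0}^n \binom{n}{j} H_j(x) y^{n-j}$ for probabilists' Hermite polynomials gives the compact representation $\hat V_{k'} \triangleq \sum_{j=0}^{k'}\binom{k'}{j}\lambda_h^j H_j(X_2/\lambda_h)(-X_1)^{k'-j} = \lambda_h^{k'} H_{k'}((X_2-X_1)/\lambda_h)$, so Lemma \ref{lemma:hermite} yields $\bE[\hat V_{k'} \mid X_1] = (\mu - X_1)^{k'}$. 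Set $A = \{X_1 \ge (c_1/4)\lambda_h\sqrt{\ln n}\}$; since $\mu \ge (c_1/2)\lambda_h\sqrt{\ln n}$, on $A$ both $X_1$ and $\mu$ (and every intermediate $\xi$) exceed $(c_1/4)\lambda_h\sqrt{\ln n}$, and the choice $R = \lfloor 2r\rfloor$ makes $r-R-1<0$, so the lower bound on $\xi$ directly controls $|\xi|^{r-R-1}$.

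For the bias, write $\bE S_{\lambda_h} - \mu^r = -\mu^r \bP(A^c) - \bE[\mathbbm{1}_A R_R(X_1, \mu)]$. The tail term is handled via Lemma \ref{lemma:gauss_tail}: $\mu^r \bP(A^c) \lesssim \mu^r \exp(-\mu^2/(8\lambda_h^2))$, and since this function is monotone decreasing for $\mu \ge 2\lambda_h\sqrt{r}$ (which holds in the feasible regime for $n$ large), its supremum on $\mu \ge (c_1/2)\lambda_h\sqrt{\ln n}$ is attained at the left endpoint, producing $(\lambda_h\sqrt{\ln n})^r n^{-c_1^2/32}$. The remainder term is controlled by the pointwise bound $\mathbbm{1}_A |R_R| \lesssim (\lambda_h\sqrt{\ln n})^{r-R-1} |\mu - X_1|^{R+1}$ combined with the Gaussian moment $\bE|\mu - X_1|^{R+1} \lesssim \lambda_h^{R+1}$, giving $\lambda_h^r(\sqrt{\ln n})^{r-R-1}$.

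For the $k$-th central moment, use the inequality $\bE|Y - \bE Y|^k \le 2^k \bE|Y - \mu^r|^k$ together with the decomposition $S_{\lambda_h} - \mu^r = \mathbbm{1}_A[\tilde S - \bE[\tilde S \mid X_1]] - \mathbbm{1}_A R_R(X_1, \mu) - \mathbbm{1}_{A^c}\mu^r$, where $\tilde S$ denotes $S_{\lambda_h}$ without the indicator. By Minkowski, I bound each of the three summands in $L^k$. For the $X_2$-noise term, a second Hermite translation $H_{k'}(W+Y) = \sum_{j=0}^{k'}\binom{k'}{j}H_j(Y)W^{k'-j}$ with $W = (\mu-X_1)/\lambda_h$ and $Y = (X_2-\mu)/\lambda_h \sim \cN(0,1)$ yields $\hat V_{k'} - (\mu-X_1)^{k'} = \lambda_h^{k'}\sum_{j=1}^{k'}\binom{k'}{j}H_j(Y)W^{k'-j}$; since $\|H_j(Y)\|_k$ is a universal constant and $\bE|X_1|^a \lesssim \mu^a$, $\bE|\mu-X_1|^a \lesssim \lambda_h^a$, the dominant $k'=j=1$ contribution delivers $\lambda_h^k \mu^{(r-1)k}$ while higher $k'$ terms are smaller in the regime $\lambda_h \ll \mu$. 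The Taylor remainder term splits at $\{X_1 \ge \mu/2\}$: there $\xi \ge \mu/2$ gives $|R_R|^k \lesssim \mu^{(r-R-1)k}\lambda_h^{(R+1)k} \le \lambda_h^k\mu^{(r-1)k}$ (using $(\lambda_h/\mu)^{Rk}\le 1$), while on $\{X_1<\mu/2\}\cap A$ the exponentially small probability $\exp(-\mu^2/(8\lambda_h^2))$ absorbs the polynomial factors. The term $\mu^{rk}\bP(A^c)$ is absorbed identically.

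The main obstacle is the bookkeeping in the central moment argument: every $(k',j)$ summand arising from the combined Hermite/Taylor expansion must be compared to the dominant $k'=j=1$ contribution to ensure the claimed $\lambda_h^k \mu^{(r-1)k}$ scaling holds uniformly in $\mu \ge (c_1/2)\lambda_h\sqrt{\ln n}$, and at the boundary $\mu \asymp \lambda_h\sqrt{\ln n}$ one must verify that the exponentially small tail contributions from $\{X_1<\mu/2\}$ and $A^c$ remain dominated by the target bound, which is precisely where the condition $c_1^2\ge 16$ (ensuring $c_1$ is large enough relative to $r$ and $k$) is used.
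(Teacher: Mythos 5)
Your proof is essentially correct and reaches the same bounds, but the bookkeeping in the central-moment step is organized differently from the paper. The paper expands the Hermite polynomials $H_j(X_2/\lambda_h)$ into raw powers of $X_2$, so that $S_{\lambda_h}(X_1,X_2)$ becomes a sum of monomials $\lambda_h^{j-i}X_2^i X_1^{r-j}$, and then bounds the $k$-th central moment of each monomial by splitting it into a ``variance from $X_2$'' piece and a ``variance from $X_1$'' piece (the quantities $A_1$ and $A_2$ in the paper, both controlled by Lemma \ref{lemma:gauss_monomials}). You instead invoke the Hermite translation identity a second time, centering $X_2$ at $\mu$ rather than at $0$, to write $\hat V_{k'}-(\mu-X_1)^{k'}=\sum_{j\ge 1}\binom{k'}{j}\lambda_h^jH_j\bigl((X_2-\mu)/\lambda_h\bigr)(\mu-X_1)^{k'-j}$. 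This cleanly separates the $X_2$-noise as bounded-moment factors $H_j(Y)$ with $Y\sim\mathcal{N}(0,1)$, avoids the explicit $(i,j,k)$-indexed monomial expansion, and makes the dominant $(k',j)=(1,1)$ term transparent. Both routes are sound; yours has a cleaner algebraic structure, while the paper's is more mechanical but more self-contained in that it directly applies Lemma \ref{lemma:gauss_monomials} to each monomial.

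Two small points of rigor worth flagging. First, ``$\bE|X_1|^a\lesssim\mu^a$'' is false for $a<0$ without the indicator $\mathbbm{1}_A$; what you actually need is the truncated-moment bound of Lemma \ref{lemma:gauss_monomials}, which holds precisely because the indicator keeps $X_1$ bounded away from $0$. Second, your factored bound on $\bE\bigl[\mathbbm{1}_A|X_1|^{(r-k')k}|\mu-X_1|^{(k'-j)k}\bigr]$ treats the two $X_1$-dependent factors as if their moments split; one still needs a Cauchy--Schwarz step (or the explicit computation the paper performs) since they are not independent. Neither issue is fatal --- the Cauchy--Schwarz step costs nothing once Lemma \ref{lemma:gauss_monomials} is applied with doubled exponents --- but both should be spelled out. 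Finally, your remark that ``$c_1^2\ge 16$ is used at the boundary $\mu\asymp\lambda_h\sqrt{\ln n}$'' overstates where the hypothesis bites for this lemma: here $c_1$ only enters through the constant $C(r,k)$ and through the requirement that $\mu\ge\lambda_h$ (trivial for $c_1\ge 2$, $n\ge 3$); the quantitative constraint on $c_1$ relative to $k$ is used in the proof of Lemma \ref{lemma:overall_lr_odd}, not here.
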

\begin{proof}
	Throughout the proof we use the notation $a_n\lesssim b_n$ to show that $|\frac{a_n}{b_n}|$ is upper bounded by a universal constant which only depends on $c_1, k$ and $r$. 
	
	First we analyze the bias. By Lemma \ref{lemma:hermite} and independence of $X_1$ and $X_2$, 
	\be
	\bE S_{\lambda_h}(X_1, X_2) &= \bE\left[\mathbbm{1}(X_1\ge \frac{c_1}{4}\lambda_h\sqrt{\ln n})\cdot \sum_{k=0}^R a_kX_1^{r-k}\sum_{j=0}^k \binom{k}{j}\bE_{X_2}\left(\lambda_h^jH_j(\frac{X_2}{\lambda_h})\right)(-X_1)^{k-j}\right] \\
	&= \bE\left[\mathbbm{1}(X_1\ge \frac{c_1}{4}\lambda_h\sqrt{\ln n})\cdot \sum_{k=0}^R a_kX_1^{r-k}\sum_{j=0}^k \binom{k}{j}\mu^j(-X_1)^{k-j}\right] \\
	& = \bE \left[\mathbbm{1}(X_1\ge \frac{c_1}{4}\lambda_h\sqrt{\ln n})\cdot \sum_{k=0}^R a_kX_1^{r-k}(\mu-X_1)^k\right], 
	\ee
	where $a_k\triangleq\frac{r(r-1)\cdots(r-k+1)}{k!}$ is the Taylor coefficient. Note that by the Taylor expansion with Lagrange remainder term, we have
	\begin{align*}
	\mu^r - \sum_{k=0}^R a_kX_1^{r-k}(\mu-X_1)^k = a_{R+1}\xi^{r-R-1}(\mu-X_1)^{R+1},
	\end{align*}
	for some $\xi$ lying between $X_1$ and $\mu$. In view of $\mu \ge \frac{c_1}{2}\lambda_h\sqrt{\ln n}$ and $X_1\ge \frac{c_1}{4}\lambda_h\sqrt{\ln n}$, we conclude that $\xi \ge \frac{c_1}{4}\lambda_h\sqrt{\ln n}$. Hence, the triangle inequality yields
	\be
	\ &|\bE S_{\lambda_h}(X_1,X_2) - \mu^r| \\
	&\le \mu^r \bP(X_1<\frac{c_1}{4}\lambda_h\sqrt{\ln n}) +  \bE \left|\mathbbm{1}(X_1\ge \frac{c_1}{4}\lambda_h\sqrt{\ln n})\cdot \left(\sum_{k=0}^R a_kX_1^{r-k}(\mu-X_1)^k-\mu^k\right)\right|\\
	&\le  \mu^r \bP(|X_1-\mu|\ge \frac{\mu}{2}) + \sup_{\xi\ge \frac{c_1}{4}\lambda_h\sqrt{\ln n}}\bE|\mathbbm{1}(X_1\ge \frac{c_1}{4}\lambda_h\sqrt{\ln n})\cdot a_{R+1}\xi^{r-R-1}(\mu-X_1)^{R+1}|\\
	&\le 2\mu^r\exp(-\frac{\mu^2}{8\lambda_h^2}) + \left(\frac{c_1}{4}\lambda_h\sqrt{\ln n}\right)^{r-R-1}|a_{R+1}|\cdot\bE|\mu-X_1|^{R+1}\\
	&\lesssim (\lambda_h\sqrt{\ln n})^r n^{-c_1^2/32} + \lambda_h^r(\sqrt{\ln n})^{r-R-1},
	\ee
	where we have used Lemma \ref{lemma:gauss_tail}, $r-R-1<0$ and
	\be
	\ & \max_{\mu \ge c_1 \lambda_h \sqrt{\ln n} / 2} \mu^r \exp(-\frac{\mu^2}{8\lambda_h^2}) \lesssim (\lambda_h\sqrt{\ln n})^r n^{-c_1^2/32}, \\
	& \bE|X_1-\mu|^k \le \sqrt{\bE|X_1-\mu|^{2k}} \lesssim \lambda_h^{k}, \qquad k=0,1,\cdots.
	\ee
	This proves the bias bound. 
	
	As for the $k$-th central moment, we may write
	\be
	S_{\lambda}(X_1, X_2) = \sum_{\ell=0}^R \sum_{j=0}^\ell b_{j,\ell}\zeta_j \eta_j,
	\ee
	where
	\be
	b_{j,\ell} &\triangleq \frac{r(r-1)\cdots(r-\ell+1)}{\ell!}\cdot (-1)^{\ell-j}\binom{\ell}{j}, \\
	\zeta_j &\triangleq X_1^{r-j}\mathbbm{1}(X_1\ge \frac{c_1}{4}\lambda_h\sqrt{\ln n}), \\
	\eta_j &\triangleq \lambda_h^j H_j(\frac{X_2}{\lambda_h}). 
	\ee
	By independence of $X_1$ and $X_2$, 
	\be
	S_{\lambda}(X_1, X_2) - \E[S_{\lambda}(X_1,X_2)] = \sum_{\ell=0}^R \sum_{j=0}^{\ell} b_{j,\ell}[(\zeta_j - \E[\zeta_j])\E[\eta_j] + \zeta_j(\eta_j - \E[\eta_j])]. 
	\ee
	
	Since the coefficients $b_{j,\ell}$ do not depend on $\mu$ or $\lambda_h$, by the triangle inequality it suffices to prove that for any $j=0,1,\cdots,R$, 
	\be
	A_j &\triangleq \E[|\zeta_j - \E[\zeta_j]|^k] \cdot |\E[\eta_j]|^k \lesssim \lambda_h^k\mu^{(r-1)k}, \label{eq.A_1}\\
	B_j &\triangleq \E[|\zeta_j|^k]\cdot \E[|\eta_j - \E[\eta_j]|^k ] \lesssim \lambda_h^k \mu^{(r-1)k}. \label{eq.A_2}
	\ee
	
	For the first term $A_j$, the second inequality of Lemma \ref{lemma:gauss_monomials} with $\sigma=\lambda_h, \alpha=r-j$ and $c=c_1/2$ gives
	$$
	\E[|\zeta_j - \E[\zeta_j]|^k] \lesssim \lambda_h^k\mu^{(r-j-1)k}. 
	$$
	Moreover, Lemma \ref{lemma:hermite} gives $\E[\eta_j] = \mu^j$, and therefore \eqref{eq.A_1} holds. 
	
	For the second term $B_j$, we may assume that $j\ge 1$ since $B_0=0$. For the $k$-th moment of $\zeta_j$, the first inequality of Lemma \ref{lemma:gauss_monomials} with $\sigma=\lambda_h, \alpha=r-j$ and $c=c_1/2$ gives $\E[|\zeta|^k]\lesssim \mu^{(r-j)k}$. To upper bound the $k$-th central moment of $\eta_j$, we express the $j$-th Hermite polynomial $H_j$ as $H_j(x) = \sum_{i=0}^j a_{i,j}x^i$, then 
	\be 
	\eta_j - \E[\eta_j] = \sum_{i=0}^j a_{i,j}\lambda_h^{j-i}(X_2^i - \E[X_2^i]) = \sum_{i=1}^j a_{i,j}\lambda_h^{j-i}(X_2^i - \E[X_2^i]). 
	\ee 
	Write $X_2=\mu+\lambda_h Z$, with $Z\sim \mathcal{N}(0,1)$. Then for $i\ge 1$, 
	\begin{align*}
	X_2^i - \bE X_2^i = (\mu+\lambda_h Z)^i - \bE (\mu+\lambda_h Z)^i = \sum_{i'=1}^{i} \binom{i}{i'}\mu^{i-i'} \lambda_h^{i'} (Z^{i'} - \bE[Z^{i'}]). 
	\end{align*}
	Since all moments of $Z$ are finite, the $i'$-th summand has $k$-th moment $\lesssim \mu^{(i-i')k}\lambda_h^{i'k}$, and the triangle inequality then yields
	\be
	\bE|X_2^i-\bE X_2^i|^k \lesssim \sum_{i'=1}^i \mu^{(i-i')k}\lambda_h^{i'k} \lesssim \lambda_h^{k}(\mu^{(i-1)k}+\lambda_h^{(i-1)k}) \lesssim \lambda_h^k \mu^{(i-1)k},
	\ee
	where the last inequality follows from $i\ge 1$ and the assumption $\lambda_h\lesssim \mu$. Therefore, by a triangle inequality again, 
	\be 
	\E[|\eta_j - \E[\eta_j]|^k] \lesssim \sum_{i=1}^j \lambda_h^{(j-i)k}\cdot \bE|X_2^i-\bE X_2^i|^k \lesssim \lambda_h^k \sum_{i=1}^j \lambda_h^{(j-i)k}\mu^{(i-1)k} \lesssim \lambda_h^k \mu^{(j-1)k}.
	\ee
	Finally, combining the previous upper bounds yields
	\be 
	B_j = \E[|\zeta_j|^k]\cdot \E[|\eta_j - \E[\eta_j]|^k ] \lesssim \mu^{(r-j)k}\cdot \lambda_h^k \mu^{(j-1)k} = \lambda_h^k \mu^{(r-1)k}, 
	\ee 
	which gives \eqref{eq.A_2}. Combining \eqref{eq.A_1} and \eqref{eq.A_2} completes the proof of the upper bound for the $k$-th central moment. 
\end{proof}

\subsection{Proof of Lemma \ref{lemma:lr_odd_phih_moments}}
For the simplicity of the proof notation, we shall assume that $J=1/h$ is an integer. The more general proof follows with obvious modifications by working with $\lfloor 1/h\rfloor$. We only provide the proof here for the case when $J$ is an even integer i.e. $J=2L$ for some $L\geq 1$. The proof for $J$ odd can be obtained similarly. 

In particular, we consider the partition of $[0,1]$ into $2L$ consecutive subintervals of length $h$ each and for $l=0,1,\ldots,2L-1$ denote the $l^{\mathrm{th}}$ subinterval by $I_l$ i.e. $I_l=[(l-1)h,lh)$ for $l=0,\ldots,2L-2$ and $I_{2L-1}=[(2L-1)h,2Lh]$. Let $I_1=\bigcup_{l=0}^{L-1}I_{2l}$ and $I_2=\bigcup_{l=0}^{L-1}I_{2l+1}$. Then
\be 
\ & \int_0^1 \left(T_h(x)-\E T_h(x)\right)dx\\
&=\int_{I_1} \left(T_h(x)-\E T_h(x)\right)dx+\int_{I_2} \left(T_h(x)-\E T_h(x)\right)dx\\
&=T_1+T_2.
\ee
Indeed,
\be 
\E\left|\int_0^1 \left(T_h(x)-\E T_h(x)\right)dx\right|^k&\leq 2^{k-1}\left(\E|T_1|^k+\E|T_2|^k\right).\label{eqn:lr_phih_moment}
\ee
We now provide control over $\E|T_1|^k$. The bound over $\E|T_2|^k$ is similar and combining them shall yield the desired proof of the lemma.
First note that
\be 
T_1=\sum_{l=0}^{L-1}\int_{2lh}^{(2l+1)h}(T_h(x)-\E T_h(x))dx=\sum_{l=0}^{L-1} \xi_{h,l},
\ee
where $\xi_{h,l}(x)=\int_{2lh}^{(2l+1)h}(T_h(x)-\E T_h(x))dx$ are independent and zero-mean random variables for $l=0,\ldots,L-1$. Therefore, by Rosenthal's Inequality (Lemma \ref{lemma:rosenthal}) we have that 
\be 
\E|T_1|^k=\E\left|\sum_{l=1}^{L-1} \xi_{h,l}\right|^k&\leq C(k)\left[\sum_{l=0}^{L-1}\E|\xi_{h,l}|^k+\left(\sum_{l=0}^{L-1} \E|\xi_{h,l}|^2\right)^{k/2}\right].\label{eqn:lr_even_partition_moments}
\ee
Now, by Jensen's Inequality on the interval of length $h$
\be 
\E|\xi_{h,l}|^k&=\E\left|\int_{2lh}^{(2l+1)h}(T_h(x)-\E(T_h(x)))dx\right|^k\\
&\leq h^k\frac{1}{h}\int_{2lh}^{(2l+1)h}\E|T_h(x)-\E(T_h(x))|^k dx \\
&\leq h^k\frac{1}{h}\int_{2lh}^{(2l+1)h}C_2n^{2k\epsilon}\left(\lambda_h^{kr} + \lambda_h^k|f_h(x)|^{k(r-1)}\right), \label{eqn:lr_even_partition_jensens}
\ee
where the last inequality follows from Lemma \ref{lemma:lone_xi_bias_var} (for $r=1$) and Lemma \ref{lemma:overall_lr_odd} (for $r>1$) with $C_2$ a constant depending on $c_1,c_2,\epsilon,\sigma,K_M,k$. Plugging in the bound \eqref{eqn:lr_even_partition_jensens} into \eqref{eqn:lr_even_partition_moments} and subsequently combining with \eqref{eqn:lr_phih_moment} completes the proof of Lemma \ref{lemma:lr_odd_phih_moments}. \qed

\subsection{Proof of Lemma \ref{lem.measure}}\label{subsec:measure_proof}
Let $S$ be the optimal value of the optimization program in \eqref{eq:Rstar}, then the target is to prove that $S = 2E_{q-1,K}(f;I)$. We first show that $S\le 2E_{q-1,K}(f;I)$. Let $P(x) = \sum_{k = -q+1}^K a_kx^k$ be the best approximating rational function attaining the approximation error $E_{q-1,K}(f;I)$, then for any feasible pair of probability measures $(\nu_0, \nu_1)$ in \eqref{eq:Rstar}, the triangle inequality gives
\begin{align*}
\int f(x)(\nu_1(dx) - \nu_0(dx)) &= \int (f(x) - P(x))(\nu_1(dx) - \nu_0(dx)) \\
&\le \int |f(x) - P(x)|(\nu_1(dx) + \nu_0(dx)) \\
&\le \int E_{q-1,K}(f;I)(\nu_1(dx) + \nu_0(dx)) \\
&= 2E_{q-1,K}(f;I). 
\end{align*}
Consequently, $S\le 2E_{q-1,K}(f;I)$. To show the other inequality, we construct the probability measures $\nu_0$ and $\nu_1$ explicitly. Since the interval $I$ does not contain zero, the functions $\{x^{-q+1}, x^{-q+2}, \cdots, x^K \}$ in $C(I)$ form a Chebyshev system \cite[Section 3.3, Example 2]{Devore--Lorentz1993}, and the Chebyshev alternation theorem \cite[Chapter 3, Theorem 5.1]{Devore--Lorentz1993} shows that there exist a rational function $P(x) = \sum_{k=-q+1}^K a_kx^k$ and points $x_0<x_1<\cdots<x_{q+K}$ in $I$ with $f(x_i) - P(x_i) = \varepsilon\cdot (-1)^iE_{q-1,K}(f;I)$ for all $i=0,1,\cdots,q+K$, and $\varepsilon\in \{\pm 1\}$. Construct the signed measure $\nu$ supported on $\{x_0,x_1,\cdots,x_{q+K}\}$ with
\begin{align*}
\nu(\{x_i\}) = c_0\cdot x_i^{q-1}\prod_{j\neq i} \frac{1}{x_i - x_j}, 
\end{align*}
where $c_0\in \bR$ is a scaling factor such that $|\nu|(\bR) = 1$ and $\nu(\{x_0\})$ has the same sign as $\varepsilon$. 
For each $k = -q+1,\cdots,K$, using the divided difference of $x\mapsto x^{k+q-1}$, we have
\begin{align*}
\int x^k\nu(dx) = c_0\cdot \sum_{i=0}^{q+K} x_i^{k+q-1}\prod_{j\neq i} \frac{1}{x_i - x_j} = 0.
\end{align*}
Moreover, for each $i=0,1,\cdots,q+K$, the difference $f(x_i) - P(x_i)$ has the same sign as $\nu(\{x_i\})$, for the signs of $\nu(\{x_i\})$ are also alternating. Therefore, 
\begin{align*}
\int f(x)\nu(dx) &= \int (f(x) - P(x))\nu(dx) + \int P(x)\nu(dx)\\
&= \int |f(x) - P(x)||\nu|(dx)  + 0\\
&= E_{q-1,K}(f;I)\cdot |\nu|(\bR) = E_{q-1,K}(f;I). 
\end{align*}
Now the proof is completed by considering the Jordan decomposition of $\nu = \nu_+ - \nu_-$ and choosing $\nu_0 = 2\nu_+, \nu_1 = 2\nu_-$. 

\subsection{Proof of Lemma \ref{lem.approx}}
We need some notions and results from approximation theory first. For functions defined on $[0,1]$, define the $r$-th order Ditzian--Totik modulus of smoothness by \cite{Ditzian--Totik1987}
\be
\omega_\varphi^r(f,t)_\infty \triangleq \sup_{0<h\le t} \|\Delta_{h\varphi(x)}^r f(x)\|_\infty, 
\ee
where $\varphi(x)\triangleq \sqrt{x(1-x)}$. This quantity is related to the polynomial approximation error via the following lemma. 
\begin{lemma}\cite{Ditzian--Totik1987}\label{lem.DT_modulus}
	For an integer $u>0$ and $n>u$, there exists some constant $M_u$ which depends on $u$ but not on $t\in (0,1)$ nor $f$, such that 
	\be
	E_{0,n}(f;[0,1]) &\le M_u\omega_\varphi^u(f,\frac{1}{n})_\infty,\\
	\frac{M_u}{n^u}\sum_{k=0}^n (k+1)^{u-1}E_{0,k}(f;[0,1]) &\ge \omega_\varphi^u(f,\frac{1}{n})_\infty.
	\ee
\end{lemma}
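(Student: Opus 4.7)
The two inequalities constitute the direct (Jackson-type) and inverse (Bernstein--Stechkin-type) theorems in the Ditzian--Totik theory of polynomial approximation on $[0,1]$ under the endpoint weight $\varphi(x)=\sqrt{x(1-x)}$. My plan is to establish the two halves separately, with both resting on the $K$-functional equivalence
\[
\omega_\varphi^u(f,t)_\infty \asymp K_{u,\varphi}(f,t^u)_\infty, \qquad K_{u,\varphi}(f,t^u)_\infty \triangleq \inf_g \left\{ \|f-g\|_\infty + t^u \|\varphi^u g^{(u)}\|_\infty \right\},
\]
where the infimum runs over $g$ with $g^{(u-1)}$ locally absolutely continuous and $\varphi^u g^{(u)}\in L^\infty[0,1]$. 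This identification is the common engine: the $\lesssim$ direction follows immediately from the definition of the modulus applied to $g$, while the reverse $\gtrsim$ direction requires a smoothing construction that regularizes $f$ by averaging over $\varphi$-scaled increments.

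For the direct half $E_{0,n}(f;[0,1])\le M_u\omega_\varphi^u(f,1/n)_\infty$, I would construct a linear operator $L_n\colon C[0,1]\to \mathsf{Poly}_n$ that reproduces polynomials of degree less than $u$ and satisfies
\[
\|f-L_nf\|_\infty \;\le\; C\left(\|f-g\|_\infty + n^{-u}\|\varphi^u g^{(u)}\|_\infty\right)
\]
for every admissible $g$. Taking the infimum over $g$ and invoking the $K$-functional equivalence yields $E_{0,n}(f)\le \|f-L_nf\|_\infty \lesssim \omega_\varphi^u(f,1/n)_\infty$. A natural choice for $L_n$ is a de la Vall\'{e}e--Poussin-type sum built from the Jacobi kernel with parameters $(-1/2,-1/2)$, or equivalently, after the Chebyshev substitution $x=\cos^2(\theta/2)$, the classical trigonometric de la Vall\'{e}e--Poussin operator applied to $f(\cos^2(\theta/2))$.

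For the converse half, the central ingredient is the weighted Bernstein inequality
\[
\|\varphi^u p_n^{(u)}\|_\infty \;\le\; C n^u \|p_n\|_\infty, \qquad \forall\, p_n\in \mathsf{Poly}_n,
\]
obtained by pulling back to trigonometric polynomials via $x=\cos^2(\theta/2)$ and applying Bernstein's classical derivative inequality $u$ times. Letting $p_k^*\in \mathsf{Poly}_k$ denote a best uniform approximant to $f$ with $\|f-p_k^*\|_\infty = E_{0,k}(f)$, I would use the dyadic telescoping
\[
p_{2^m}^* \;=\; p_1^* + \sum_{j=0}^{m-1}(p_{2^{j+1}}^*-p_{2^j}^*),
\]
apply the weighted Bernstein inequality to each difference (of degree $\le 2^{j+1}$ and sup-norm $\le 2E_{0,2^j}(f)$), and sum the resulting geometric-like series to obtain $\|\varphi^u(p_n^*)^{(u)}\|_\infty \lesssim \sum_{k=0}^n (k+1)^{u-1}E_{0,k}(f)$. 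Plugging $g=p_n^*$ into the $K$-functional then produces the stated inverse inequality after a final application of the $\omega_\varphi^u\asymp K_{u,\varphi}$ equivalence.

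The main obstacle is the weighted Bernstein inequality with the correct boundary behaviour: because $\varphi^u$ vanishes at the endpoints of $[0,1]$, one cannot simply apply a global Markov-type estimate without losing a factor of $n^u$. The Chebyshev substitution converts the endpoint degeneracy into a non-degenerate trigonometric estimate, but the bookkeeping for iterated derivatives under the change of variables (via Fa\`{a} di Bruno, with $\varphi$-powers absorbing the chain-rule factors) is delicate. Equally subtle is the harder $\gtrsim$ direction of the $K$-functional equivalence, which requires a Steklov-type regularization whose radius shrinks like $\varphi(x)$ near the endpoints in order that the smoothed function $g$ have $\varphi^u g^{(u)}$ bounded by $t^{-u}\omega_\varphi^u(f,t)_\infty$ up to a constant.
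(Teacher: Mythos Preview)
Your outline is sound and tracks the standard Ditzian--Totik methodology. Note, however, that the paper does not supply its own proof of this lemma: it is quoted directly from \cite{Ditzian--Totik1987} as a black-box input to the argument for Lemma~\ref{lem.approx}, so there is no ``paper's proof'' to compare against beyond the cited reference itself. What you have sketched---the $K$-functional equivalence $\omega_\varphi^u\asymp K_{u,\varphi}$, a Jackson-type bound via a de la Vall\'{e}e--Poussin operator (or Jacobi-kernel construction), and the Stechkin inverse argument driven by the weighted Bernstein inequality $\|\varphi^u p_n^{(u)}\|_\infty\le Cn^u\|p_n\|_\infty$ combined with dyadic telescoping---is exactly the architecture of the proofs in the Ditzian--Totik monograph (see in particular their Chapters~2, 7, and~8). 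Your identification of the two delicate points (the correct endpoint weight in Bernstein's inequality via the substitution $x=\cos^2(\theta/2)$, and the $\varphi$-scaled Steklov averaging needed for the harder direction of the $K$-functional equivalence) is accurate; these are precisely where the work lies, and neither is trivial to execute in full, but both are carried out in the reference and your roadmap points to the right constructions.
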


Take $u=1$, the second inequality together with the monotonicity of $E_{0,n}(f;[0,1])$ in $n$ yields
\be
E_{0,n}(f;[0,1]) &\ge \frac{1}{Dn}\sum_{k=n}^{Dn} E_{0,k}(f;[0,1])\\
&\ge \frac{\omega_\varphi^1(f,\frac{1}{Dn})_\infty}{M_1} - \frac{1}{Dn}\sum_{k=0}^{n-1} E_{0,k}(f;[0,1])\\ 
&\ge \frac{\omega_\varphi^1(f,\frac{1}{Dn})_\infty}{M_1} - \frac{E_{0,0}(f;[0,1])}{D}\nonumber, 
\ee
where we choose $D = \lceil 1/\sqrt{c} \rceil$, with the constant $c\in (0,1/2)$ is specified later. Since
\be
E_{q-1,n}(f_q;I) = \inf_{a_1,\cdots,a_{q-1}} E_{0,n}\left(x^{-q+\frac{r}{2}}+\sum_{k=1}^{q-1} \frac{a_k}{x^k}; I\right), 
\ee
it suffices to obtain a lower bound independent of $a_1,\cdots,a_{q-1}$ for the polynomial approximation error $E_{0,n}(x^{-q+r/2}+\sum_{k=1}^{q-1} a_kx^{-k}; [cn^{-2},1])$. Define $g(x)=x^{-q+r/2}+\sum_{k=1}^{q-1} a_kx^{-k}$, and let $\tilde{g}(x)=g(cn^{-2} + (1-cn^{-2})x)$ be the scaled version defined on $[0,1]$. We distinguish into two cases. 

First we consider the case where $E_{0,0}(\tilde{g};[0,1])\le C_1n^{2q-r}$ for some fixed constant $C_1=2c^{-q+r/2}$. By the definition of $\omega_\varphi^1(f,t)_\infty$ and the choice $D = \lceil  1/\sqrt{c} \rceil$, there exists universal constants $0<A<B$ (which are independent of everything) such that 
\be
\omega_\varphi^1(\tilde{g},\frac{1}{Dn})_\infty &\ge \sup_{t\in [A,B]} \left|g\left(\frac{t+1}{(Dn)^2}\right)-g\left(\frac{t}{(Dn)^2}\right)\right|\\
&= (Dn)^{2q-r}\sup_{t\in [A,B]} \left| h_{q-\frac{r}{2}}(t) + \sum_{k=1}^{q-1} (Dn)^{2k-2q+r}a_kh_k(t)\right|\\
&\ge (Dn)^{2q-r}\inf_{b_1,\cdots,b_{q-1}} \sup_{t\in [A,B]} \left|h_{q-\frac{r}{2}}(t) + \sum_{k=1}^{q-1} b_kh_k(t)\right|, 
\ee
where
\be
h_k(t) \triangleq (t+1)^{-k} - t^{-k}.
\ee
Since $r$ is not even, it is straightforward to verify that the functions $h_1,\cdots,h_{q-1},h_{q-\frac{r}{2}}$ are linearly independent in the interval $[A,B]$, we conclude that
\be
\omega_\varphi^1(\tilde{g},\frac{1}{Dn})_\infty \ge C_2(Dn)^{2q-r}, 
\ee
where the constant $C_2>0$ only depends on $r, q, A, B$ but not on $a_1,\cdots,a_{q-1}$ or $c,D$. Hence, in this case we have
\be\label{eq.case_I}
E_{0,n}(g;[cn^{-2},1]) \ge \frac{C_2(Dn)^{2q-r}}{M_1} - \frac{C_1n^{2q-r}}{D}, 
\ee
where neither of the constants depends on $a_1,\cdots,a_{q-1}$. 

Second we consider the case where $E_{0,0}(\tilde{g};[0,1])> C_1n^{2q-r}$. Since $2q-r>0$, we have
\be
E_{0,0}(\tilde{g};[0,1]) &\le \max_{x\in [cn^{-2},1]} \left|x^{-q+r/2}+\sum_{k=1}^{q-1} a_kx^{-k}\right| \\
&\le c^{-q+r/2}n^{2q-r} + \sum_{k=1}^{q-1} |a_k|c^{-k}n^{2k}\\
&\le \frac{1}{2}E_{0,0}(\tilde{g};[0,1])  + \sum_{k=1}^{q-1} |a_k|c^{-k}n^{2k}, 
\ee
and thus there exists some $j\in\{1,\cdots,q-1\}$ such that
\be
|a_j|c^{-j}n^{2j} \ge C_3E_{0,0}(\tilde{g};[0,1]),
\ee
where $C_3 = 1/(2q)$ is a numerical constant. Defining the interval $[A,B]$ as in the first case, we have
\be
\omega_\varphi^1(\tilde{g},\frac{1}{Dn})_\infty &\ge \sup_{t\in [A,B]} \left|g\left(\frac{t+1}{(Dn)^2}\right)-g\left(\frac{t}{(Dn)^2}\right)\right|\\
&= |a_j|(Dn)^{2j}\sup_{t\in [A,B]} \left| \frac{(Dn)^{2q-r-2j}}{a_j}h_{q-\frac{r}{2}}(t) + \sum_{k=1}^{q-1} (Dn)^{2k-2j}\frac{a_k}{a_j}h_k(t)\right|\\
&\ge |a_j|(Dn)^{2j}\inf_{b_1,\cdots,b_{q-1}} \sup_{t\in [A,B]} |h_j(t) + b_jh_{q-\frac{r}{2}}(t) + \sum_{k\neq j} b_kh_k(t)|\\
&\ge C_4|a_j|(Dn)^{2j}\\
&\ge C_3C_4\cdot E_{0,0}(\tilde{g};[0,1]), 
\ee
where the numerical constant $C_4>0$ (which only depends on $r,q,A,B$ but not on $a_1,\cdots,a_{q-1}$ or $c,D$) again follows from the linear independence of the functions $h_{q-\frac{r}{2}}, h_1,\cdots, h_{q-1}$ in $[A,B]$. Now in this case we have
\be
E_{0,n}(g;[cn^{-2},1]) &\ge \left(\frac{C_3C_4}{M_1} - \frac{1}{D}\right) \cdot E_{0,0}(\tilde{g};[0,1]) \\\label{eq.case_II}
&\ge C_1\left(\frac{C_3C_4}{M_1} - \frac{1}{D}\right)\cdot n^{2q-r}, 
\ee
where again none of the constants depends on $a_1,\cdots,a_{q-1}$. 

Now by (\ref{eq.case_I}) and (\ref{eq.case_II}), we plug in the definition $D=\lceil 1/\sqrt{c}\rceil$ and $C_1 = 2c^{-q+r/2}n^{2q-r}$, and use that the constants $C_2, C_3, C_4$ do not depend on $a_1,\cdots,a_{q-1}$ or $c,D$, to conclude that for $c>0$ small enough, we have
\be
E_{q,n}(x^{-q+r/2};[cn^{-2},1]) &= \inf_{a_1,\cdots,a_{q-1}} E_{0,n}(x^{-q+r/2}+\sum_{k=1}^{q-1} \frac{a_k}{x^k}; [cn^{-2},1])\\
&\ge c'n^{2q-r}, 
\ee
which is the desired result. \qed

\section{Acknowledgement}
We would like to thank Tsachy Weissman for the tremendous support and very helpful discussions. We are also grateful to an anonymous referee for detailed and helpful comments on improving this paper. 

	\bibliographystyle{alpha}
	\bibliography{biblio_adaptation}
\end{document}